\renewcommand{\thefootnote}{\fnsymbol{footnote}}
\newtheorem{theorem}{Theorem}[section]
\newtheorem{lemma}[theorem]{Lemma}
\newtheorem{proposition}[theorem]{Proposition}
\newtheorem{remark}[theorem]{Remark}
\newtheorem{example}[theorem]{Example}
\newtheorem*{example*}{Example}
\newtheorem*{theorem*}{Theorem}
\newtheorem*{remark*}{Remark}
\newtheorem{corollary}[theorem]{Corollary}
\newtheorem*{corollary*}{Corollary}
\newtheorem{definition}[theorem]{Definition}
\newtheorem*{definition*}{Definition}
\newtheorem*{notation*}{Notation}
\newtheorem{notation}[theorem]{Notation}
\numberwithin{equation}{section}
\gdef\myletter{}
\let\savetheequation\theequation
\def\theequation{\savetheequation\myletter}
\def\calB{\mathcal{B}}
\def\calS{\mathcal{S}}
\def\calC{\mathcal{C}}
\def\calF{\mathcal{F}}
\def\bW{\mathbf{W}}   \def\bV{\mathbf{V}}
\def\lto{\longrightarrow} 
\newcommand{\CC}{{\mathbb C}}
\newcommand{\RR}{{\mathbb R}}
\newcommand{\ZZ}{{\mathbb Z}}
\newcommand{\PP}{{\mathbb P}}
\newcommand{\calM}{\mathcal{M}}
\newcommand{\lt}{\textsc{lt}}
\newcommand{\vol}{\mathrm{vol}}
\newcommand{\Van}{{\mathrm{Van}}}
\def \bV{\mathbf{V}}
\def \bI{\mathbf{I}}
\def \bar{\overline}
\def \hat{\widehat}
\def \bv{{\bf v}}
\def \b0{{\bf 0}}
\def \be{\mathbf{e}}
\def \span{\mathrm{span}}
\def \LT{\hbox{\textsc{lt}}}
\def \hsta{\,\hat*\,}
\long\def\symbolfootnote[#1]#2{\begingroup%
\def\thefootnote{\fnsymbol{footnote}}\footnote[#1]{#2}\endgroup}
\begin{document}

\title{Transfinite diameter on complex algebraic varieties}

\author{David A. Cox}
\address{Department of Mathematics \& Statistics, 
Amherst College, Amherst, MA 01002, USA} 
\email{dacox@amherst.edu}

\author{Sione Ma`u}
\address{Department of Mathematics,
University of Auckland,
Auckland, NZ}
\email{s.mau@auckland.ac.nz}

\begin{abstract} 
We use methods from computational algebraic geometry to study 
Chebyshev constants and the transfinite diameter of a pure
$m$-dimensional affine algebraic variety in $\CC^n$ ($m\leq n$).  The
main result is a generalization of Zaharjuta's integral formula for
the Fekete-Leja transfinite diameter.
\end{abstract}

\keywords{Chebyshev constant, transfinite diameter, Vandermonde
  determinant, affine variety, Noether normalization, monomial order}
\subjclass[2010]{32U20; 14Q15}

\maketitle

\maketitle

\section{Introduction}

This paper studies a notion of transfinite diameter on a pure
$m$-dimensional algebraic subvariety of $\CC^n$, $1\leq m\leq n$.
This is a natural generalization of the Fekete-Leja transfinite
diameter in $\CC^n$, which is an important quantity in pluripotential
theory and polynomial approximation.  In the study of the Fekete-Leja
transfinite diameter in $\CC^n$ ($n>1$), an important paper is that of
Zaharjuta \cite{zaharjuta:transfinite}.  Given a compact set
$K\subseteq\CC^n$, Zaharjuta showed that its Fekete-Leja transfinite
diameter, denoted $d(K)$, was given by a well-defined limiting process
analogous to the one-dimensional case.  The main result of
\cite{zaharjuta:transfinite} is an integral formula that realizes
$d(K)$ as a ``geometric average'' of so-called \emph{directional
  Chebyshev constants} associated to $K$; these constants measure (in
an asymptotic sense) the minimum size on $K$ of polynomials with
prescribed leading terms.

Further developments and generalizations make use of the essential
techniques in \cite{zaharjuta:transfinite}.  In
\cite{jedrzejowski:homogeneous} the notion of \emph{homogeneous}
transfinite diameter was studied and a Zaharjuta-type formula proved.
In \cite{laurumely:arithmetic}, and later in \cite{rumelylauvarley:existence}, Lau, Rumely and Varley developed Zaharjuta's
techniques in the setting of arithmetic geometry to study the notion
of \emph{sectional capacity}.  More recently, Bloom and Levenberg
studied a notion of \emph{weighted} transfinite diameter in $\CC^n$
(\cite{bloomlev:weighted}, \cite{bloomlev:transfinite}).

In \cite{baleikorocaumau:chebyshev} a notion of transfinite diameter
was defined and studied on an algebraic curve $V\subseteq\CC^n$.  It was
shown that Zaharjuta's arguments, which exploit standard algebraic
properties of polynomials, may be adapted to handle algebraic
computations in the coordinate ring of $V$.  Well-developed methods
exist to carry out such computations, using Groebner bases.  In this
paper we will apply these methods to higher dimensional algebraic
varieties.

We should mention here that the notion of transfinite diameter on algebraic varieties may be studied as a by-product of 
Berman and Boucksom's general theory of Monge-Amp\`ere energy on compact complex manifolds  \cite{bermanboucksom:growth}.  Their methods are quite
different to those of this paper.

Before we describe the contents of the paper more specifically, we
briefly recall the definition of the Fekete-Leja transfinite diameter.   

Let $\{z^{\alpha_j}\}_{j=1}^{\infty}$ be the monomials in $n$
variables listed according to a \emph{graded order} (i.e.,
$|\alpha_j|\leq|\alpha_k|$ whenever $j<k$).  Here we are using
standard multi-index notation: if $\alpha_j =
(\alpha_{j1},\dots,\alpha_{jn}) \subseteq \ZZ_{\geq 0}^n$, then
$z^{\alpha_j}=z_1^{\alpha_{j1}}z_2^{\alpha_{j2}}\cdots
z_n^{\alpha_{jn}}$ and $|\alpha_j|=\alpha_{j1}+\cdots+\alpha_{jn}$
denotes the total degree.  Write $\be_j=z^{\alpha_j}$; so for
$a=(a_1,\dots,a_n)\in\CC^n$ we have $\be_j(a) =
a_1^{\alpha_{j1}}\cdots a_n^{\alpha_{jn}}$.  Given a positive integer
$M$ and points $\{\zeta_1,\dots,\zeta_M\}\subseteq\CC^n$, the $M\times
M$ determinant
\begin{equation}
\label{eqn:0.1}
\Van(\zeta_1,\dots,\zeta_M) \ = \ \det
\bigl(\be_j(\zeta_i)\bigr)_{i,j=1}^M  \ = \  
\det
\begin{pmatrix}
1 & 1 & \cdots & 1 \\
\be_2(\zeta_1) & \be_2(\zeta_2) & \cdots & \be_2(\zeta_M) \\
\vdots & \vdots & \ddots & \vdots \\
\be_M(\zeta_1) & \be_M(\zeta_2) & \cdots & \be_M(\zeta_M)
\end{pmatrix}
\end{equation}
is called a \emph{Vandermonde determinant} of order $M$. (Note that
$\be_1=1$.) 

Let $K\subseteq\CC^n$ be compact and $s$ a positive integer. Let $m_s$
be the number of monomials of degree at most $s$ in $n$ variables, and
let $l_s=\sum_{j=1}^{m_s} |\alpha_j|$ be the sum of the
degrees. Define the $s$-th order diameter of $K$ by
\begin{equation}\label{eqn:0.2}
d_s(K) \ := \ \sup\{|\Van(\zeta_1,\dots,\zeta_{m_s})|^{\frac{1}{l_s}}:
\{\zeta_1,\dots,\zeta_{m_s}\}\subseteq K\}. 
\end{equation}
The Fekete-Leja transfinite diameter of $K$ is defined as 
$\displaystyle d(K):=\limsup_{s\to\infty}d_s(K)$.

In this paper, we construct a basis $\calC$ of polynomials for the
coordinate ring $\CC[V]$ of a pure $m$-dimensional algebraic variety
$V\subseteq\CC^n$ ($1\leq m\leq n$) of degree $d$, as long as the ring
satisfies certain algebraic conditions (see \eqref{eqn:properties}).
Write $\calC=\{\be_j\}_{j=1}^{\infty}$ for this basis which we assume
is listed in a graded ordering: $\deg(\be_j)\leq\deg(\be_k)$ if $j<k$.
We define $\Van_{\calC}(\zeta_1,\dots,\zeta_M)$ to be the Vandermonde
determinant with respect to $\calC$ using the formula \eqref{eqn:0.1}.

Define $m_s=m_s(V)$ to be the number of elements of $\calC$ of degree
at most $s$, and let $l_s=l_s(V)=\sum_{j=1}^{m_s} \deg(\be_j)$ be the
sum of the degrees.  The $s$-th order diameter of a compact set
$K\subseteq V$ is defined as in \eqref{eqn:0.2} with
$\Van_{\calC}(\cdot)$ replacing $\Van(\cdot)$ on the right-hand side.
Our main theorem (Theorem \ref{thm:5.2a}) says the following.

\begin{theorem*}
The limit $d(K):=\lim_{s\to\infty}d_s(K)$ exists and 
\[
d(K) = \Bigl(\prod_{j=1}^{d} T(K,\lambda_j)\Bigr)^{\frac{1}{d}}.
\]
\end{theorem*}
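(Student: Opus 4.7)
The plan is to adapt Zaharjuta's scheme to the variety setting, using the fact that under the hypotheses \eqref{eqn:properties} the basis $\calC$ comes from a Noether normalization $\pi\colon V\to\CC^m$ presenting $\CC[V]$ as a free $\CC[z_1,\dots,z_m]$-module of rank $d$. First I would organize $\calC$ into $d$ ``sheets'': pick module generators $\be_{\lambda_1},\dots,\be_{\lambda_d}$ whose images form a $\CC$-basis of the quotient $\CC[V]/(z_1,\dots,z_m)\CC[V]$, and write
\[
\calC \ =\ \bigsqcup_{k=1}^{d}\bigl\{z^\alpha\be_{\lambda_k}:\alpha\in\NN^m\bigr\}.
\]
The counts split accordingly: each sheet contributes $\binom{s+m}{m}+\lot$ basis elements of degree at most $s$ and $\frac{m}{m+1}\,s\binom{s+m}{m}+\lot$ to the sum of degrees.

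The next step is to sandwich $d_s(K)$ between products of Chebyshev-type constants. For each $\be_j\in\calC$, let $\tau_j(K)$ be the infimum of $\|p\|_K$ over $p\in\CC[V]$ whose $\calC$-expansion has $\be_j$ as its leading term with coefficient $1$. Row-reducing the Vandermonde matrix against the corresponding Chebyshev polynomials and applying Hadamard's inequality on one side, with a cofactor/quotient argument at a near-Fekete configuration on the other, yields
\[
\prod_{j=1}^{m_s}\tau_j(K)\ \le\ d_s(K)^{l_s}\ \le\ m_s^{m_s/2}\prod_{j=1}^{m_s}\tau_j(K).
\]
The prefactor is negligible after the $1/l_s$ root, since $m_s=O(s^m)$ while $l_s=\Theta(s^{m+1})$, so $d_s(K)$ is asymptotically governed by $\prod_j\tau_j(K)^{1/l_s}$.

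At this point the Zaharjuta--Riemann-sum argument takes over, one sheet at a time. For fixed $k$ and $\be_j=z^\alpha\be_{\lambda_k}$, the normalized constant $\tau_j(K)^{1/|\alpha|}$ depends asymptotically only on the direction $\theta=\alpha/|\alpha|\in\Sigma_m$ (the standard simplex in $\RR^m$) and converges to a directional Chebyshev constant $\tau(K,\theta;\lambda_k)$; log-convexity, hence upper semicontinuity, of $\theta\mapsto\tau(K,\theta;\lambda_k)$ upgrades pointwise convergence to uniform, and interpreting $\prod_{|\alpha|\le s}$ as a Riemann sum over $\Sigma_m$ gives
\[
\Bigl(\prod_{|\alpha|\le s}\tau_j(K)\Bigr)^{\!1/L_{s,k}}\ \longrightarrow\ T(K,\lambda_k),
\]
where $L_{s,k}$ is the sheet-$k$ contribution to $l_s$ and $T(K,\lambda_k)$ is the geometric mean of $\tau(K,\cdot;\lambda_k)$ over $\Sigma_m$. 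Since $L_{s,k}/l_s\to 1/d$ uniformly in $k$, multiplying over the sheets gives $d_s(K)\to\bigl(\prod_{k=1}^d T(K,\lambda_k)\bigr)^{1/d}$, and in particular the limit exists.

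\textbf{Main obstacle.} The real work is algebraic rather than asymptotic. The sheet decomposition of $\calC$ must be chosen so that the product structure of the Vandermonde on $K\subseteq V$ is honestly recovered, and not just a Zariski-generic statement on the base $\CC^m$: this requires combining Noether normalization with the Groebner-basis machinery encoded in \eqref{eqn:properties}, and relies on the leading-term map on $\CC[V]$ being injective on each graded piece so that the cofactor lower bound is nontrivial. Once the sheets and the Chebyshev sandwich are established, the directional-Chebyshev/Riemann-sum argument of Zaharjuta transfers essentially verbatim.
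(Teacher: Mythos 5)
Your overall scheme (sandwich Vandermonde ratios by Chebyshev constants, then run a Riemann-sum argument over the simplex) matches the paper's, but the organizing picture is off in a way that hides the genuinely hard step.

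The first issue is the ``$d$ clean sheets'' decomposition. You posit that $\calC$ splits as $\bigsqcup_{k=1}^d\{z^\alpha\be_{\lambda_k}\}$, with the $\be_{\lambda_k}$ lifting a basis of $\CC[V]/(z_1,\dots,z_m)\CC[V]$. But the paper's $\bv_i$ are \emph{not} module generators: Theorem~\ref{thm:freemodule} only shows they span a proper free submodule $M = \sum_i R\bv_i$. The cokernel $\CC[V]/M$ is nonzero (the $\bv_i$ have degree $t$ so they miss low-degree information entirely; in the hypersurface case of Example~\ref{ex:hypersurface} all the $\bv_i$ even have the same image, up to scalar, in the quotient you name). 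The basis $\calC$ therefore contains an extra collection of elements $(*)$ that is asymptotically negligible ($a_s = O(s^{m-2})$ versus $h_s\sim d\,s^{m-1}/(m-1)!$), and the proof has to show this surplus contributes nothing to the limit --- this is what $\tilde Y$, $\tilde T^-$, Lemma~\ref{lem:4.7a} and the $\tilde T_s^{sa_s}$ factor in the sandwich \eqref{eqn:4.4a} accomplish. Your sketch simply assumes they are not there.

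The deeper gap is that you treat submultiplicativity as automatic once a module decomposition is chosen. It is not. For the Riemann-sum step to apply you need $Y_i(\alpha+\tilde\alpha+\gamma)\le Y_i(\alpha)Y_i(\tilde\alpha)$ (even only weakly), which in turn requires that multiplying two polynomials whose leading terms live on sheet $i$ stays on sheet $i$. The paper gets this from the identities $v_iv_j=0$ and $v_i^2=z_m^tv_i$ in $\calS$ (equations \eqref{eqn:2.6c}, \eqref{eqn:1.2}, \eqref{eqn:1.3}), propagated to $\CC[V]$ in Lemma~\ref{lem:2.4}, i.e.\ the $\bv_i$ are chosen to be (idempotent-like) interpolants at the points of $\bar V\cap P$. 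A generic lift of a $\CC$-basis of $\CC[V]/(z_1,\dots,z_m)\CC[V]$ will \emph{not} have this property; e.g.\ for a hypersurface with the tempting free basis $1,z_n,\dots,z_n^{d-1}$, products $z_n^j\cdot z_n^k$ with $j+k\ge d$ reduce mod $f$ to combinations involving all the $z_n^l$, so the sheets mix. Without recognizing that the choice of $\bv_i$ is forced by this multiplicative requirement, Corollary~\ref{cor:4.4} and hence the whole directional-Chebyshev machinery collapse. (A smaller point in the same vein: what you actually get is \emph{weak} submultiplicativity, with a shift by $\gamma_m=(0,\dots,0,t)$, which is why Section~\ref{sec:4} works under that weaker hypothesis.) Your Hadamard variant of the upper Vandermonde bound is fine and would also work, but it does not touch the real obstacle.
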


Following Zaharjuta's terminology, the quantities $T(K,\lambda_j)$ on
the right-hand side are called \emph{principal Chebyshev constants}
and are defined in Section \ref{sec:cheby} as integral averages of
so-called \emph{directional Chebyshev constants}. Here $d$ is the
\emph{degree} of $V$ and the $\lambda_j$'s are the $d$ points of
intersection of the projective closure of $V$ in $\PP^n$ with a
certain subspace of the hyperplane at infinity.  When $V$ is a curve
the above result is in \cite{baleikorocaumau:chebyshev}.\footnote{The
  principal Chebyshev constants in this paper are called directional
  Chebyshev constants in \cite{baleikorocaumau:chebyshev}; for a
  one-dimensional curve, the $\lambda_j$'s may be interpreted as the
  directions of its linear asymptotes.}  When $\deg(V)=1$ then there
is only one principal Chebyshev constant, and one recovers Zaharjuta's
formula, up to a normalization.

In Section \ref{sec:2} we give some of the background needed for
subsequent sections, including Noether normalization, the grevlex
monomial ordering, normal forms and Hilbert functions.

In Section \ref{sec:3} we construct a basis (denoted by $\calC$) of
polynomials on the variety.  The basis $\calC$ consists of $d$ groups
of polynomials associated to the Noether normalization (elements of
the form $(**)$, see Proposition \ref{prop:span}), together with a
``smaller'' collection of monomials (elements of the form $(*)$).
When $V$ is a hypersurface, the basis $\calC$ can be computed rather
explicitly.

Section \ref{sec:4} is a general study of \emph{weakly
  submultiplicative functions}.  In \cite{bloomlev:weighted} it was
observed that Zaharjuta's computations with polynomials can be
reformulated abstractly as properties of submultiplicative functions.
We verify here that the relevant calculations go through with small
modifications under slightly weaker conditions.

In Section \ref{sec:cheby}, directional and principal Chebyshev
constants are defined and studied.  The main point is to construct
weakly submultiplicative functions using computational properties of
the basis $\calC$ (Corollary \ref{cor:4.4}).  The results of Sections
\ref{sec:3} and \ref{sec:4} can then be applied to this setting.

In Section \ref{sec:6} we prove the main theorem relating transfinite diameter to Chebyshev constants.  The standard
argument, based on estimating ratios of Vandermonde determinants with
directional Chebyshev constants, goes through in its entirety. 

In Section \ref{sec:7}, we show in Theorem \ref{thm:68} that the transfinite diameter may be computed using the standard basis of monomials on the variety (i.e., those monomials that give normal forms). This uses the fact that, up to a geometric factor in some finite set---the collection of $\bv_i$'s in Proposition \ref{prop:span}---each polynomial in the basis $\calC$ is a monomial.

In the appendix we compare our method to that of Rumely, Lau and Varley 
\cite{rumelylauvarley:existence}, whose so-called \emph{monic basis} is constructed by generating basis elements multiplicatively from a finite collection of polynomials with prescribed behaviour.  We compare both methods concretely in the case of the sphere in $\CC^3$.

\section{Background material} 
\label{sec:2}

We begin with Noether normalization.  Consider an ideal
$I\subseteq\CC[z_1,\dots,z_n]$ with the following properties:
\begin{enumerate}
\item \label{item:1} $\CC[z_1,\dots,z_m]\cap I=\{0\}$; and
\item \label{item:2} For each $i=m+1,\dots,n$ there exists a $g_i\in
  I$ which can be written in the form 
\begin{equation}
\label{eqn:noether}
g_i\ = \ z_i^{d_i} + \sum_{j=0}^{d_i-1}h_{ij}(z_1,\dots,z_{i-1})z_i^j,
\quad \hbox{with } \deg(h_{ij})+j\leq d_i \hbox{ for all } i. 
\end{equation}
\end{enumerate}
Property \eqref{item:1} is equivalent to saying that the map
$\CC[z_1,\dots,z_m]\to\CC[z_1,\dots,z_n]/I$, induced by the inclusion
into $\CC[z_1,\dots,z_n]$, is injective, and property \eqref{item:2}
implies that the quotient is finite over $\CC[z_1,\dots,z_m]$.  
The Noether normalization theorem says that one can always make a change of variables so that the above properties hold.  We state a specialized version of this theorem (cf.\ \cite{greuelpfister:singular}, Theorem 3.4.1).

\begin{theorem}[Noether Normalization]
Let $J\subseteq\CC[x_1,\dots,x_n]$ be an ideal.  Then there is a
positive integer $m\leq n$ and a complex linear change of coordinates
$z=T(x)$, $z_i=\sum_{j=1}^n T_{ij}x_j$, such that the following
properties hold (write $I=T(J)$):
\begin{enumerate}
\item[{\rm(1)}] The map $\CC[z_1,\dots,z_m]\to \CC[z_1,\dots,z_n]/I$ induced by
  inclusion is injective, and exhibits $\CC[z_1,\dots,z_n]/I$ as a
  finite $\CC$-algebra over $\CC[z_1,\dots,z_m]$.
\item[{\rm(2)}] For $i=m+1,\dots,n$, we can find polynomials $g_i\in I$ that
  satisfy \eqref{eqn:noether}.\qed
\end{enumerate} 
\end{theorem}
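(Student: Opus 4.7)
The plan is to argue by induction on $n$. When $J = 0$, take $T = \mathrm{id}$ and $m = n$; the map in (1) is the identity and (2) is vacuous. Assume the result holds in $n-1$ variables and that $J$ is nonzero.

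First I would pick any nonzero $f \in J$ of total degree $d$ and let $f_d$ be its top-degree homogeneous part. Since $f_d$ is a nonzero homogeneous polynomial, the set of invertible linear maps $T_0$ such that the coefficient of $z_n^d$ in $T_0(f)$ is nonzero is Zariski open and nonempty in $GL_n(\CC)$ (this coefficient is, up to sign, $f_d(T_0^{-1}\be_n)$). Choose such a $T_0$ and rescale; then $T_0(f)$ has the form
\[
g_n \ = \ z_n^d + \sum_{j=0}^{d-1} h_{nj}(z_1,\ldots,z_{n-1}) z_n^j,
\]
and, crucially, because $T_0$ is linear it preserves total degree, so $\deg(h_{nj}) + j \leq d$ holds automatically, matching \eqref{eqn:noether}.

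Next I would contract to one fewer variable: set $I_0 = T_0(J)$ and $J' = I_0 \cap \CC[z_1,\ldots,z_{n-1}]$. By the inductive hypothesis applied to $J'$ in the ring $\CC[z_1,\ldots,z_{n-1}]$, there exist an integer $m \leq n-1$, a linear change of coordinates $T_1$ on $z_1,\ldots,z_{n-1}$, and polynomials $g_{m+1},\ldots,g_{n-1}$ with the desired properties. Extend $T_1$ to $\CC^n$ by letting it act as the identity on $z_n$, and set $T = T_1 \circ T_0$. The polynomial $g_n$ transforms under $T_1$ into another polynomial of the same monic form in $z_n$, with the degree condition still satisfied (since $T_1$ preserves total degree). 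For property (1), injectivity reduces to $\CC[z_1,\ldots,z_m] \cap T(J) = 0$; using $T(J) \cap \CC[z_1,\ldots,z_{n-1}] = T_1(J')$ and the inductive injectivity $\CC[z_1,\ldots,z_m] \cap T_1(J') = 0$, this follows. Finiteness of $\CC[z_1,\ldots,z_n]/T(J)$ over $\CC[z_1,\ldots,z_m]$ follows from transitivity of integrality, since each $z_i$ with $i > m$ satisfies the monic relation $g_i = 0$ modulo $T(J)$.

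The main subtlety---as opposed to a deep obstacle---is enforcing the refined degree condition $\deg(h_{ij}) + j \leq d_i$. This is what forces the change of variables to be \emph{linear} rather than the more flexible substitutions $x_i \mapsto x_i + x_n^{a_i}$ used in some textbook proofs of Noether normalization. Linearity preserves total degree, which automatically produces the sharp bound on the coefficients $h_{ij}$; the only remaining content is the genericity of $T_0$, which is the standard fact that a nonzero homogeneous polynomial does not vanish identically on any Zariski-open subset of $\CC^n$.
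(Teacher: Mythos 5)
The paper does not prove this theorem; it is stated with a citation to Greuel--Pfister, Theorem~3.4.1, and the \qed closes the statement rather than a proof.  So there is no in-paper argument to compare against, only the question of whether your self-contained proof is correct.  It is.  Your argument is the standard inductive proof of Noether normalization over an infinite field, and you correctly identify the one point that matters for this paper: a \emph{linear} change of coordinates preserves total degree, so once $T_0(f)$ is monic of degree $d$ in $z_n$, the refined bound $\deg(h_{nj})+j\leq d$ in \eqref{eqn:noether} is automatic.  That bound is precisely what is needed later to conclude $\LT(g_i)=z_i^{d_i}$ under grevlex in Proposition~\ref{prop:2.2a}, and it is the reason the paper needs this linear-change version rather than the textbook variant using substitutions $x_i\mapsto x_i+x_n^{a_i}$.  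Your handling of the inductive step is also correct: $T_1$ fixes $z_n$ and acts bijectively on $\CC[z_1,\dots,z_{n-1}]$, so $T(J)\cap\CC[z_1,\dots,z_{n-1}]=T_1(J')$, the previously obtained $g_i$ remain in $T(J)$ with their degree conditions intact, and finiteness follows by integrality up the tower.

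Two minor remarks, neither affecting correctness.  First, the coefficient of $z_n^d$ in $T_0(f)$ equals $f_d(T_0^{-1}\be_n)$ exactly---there is no sign, so the ``up to sign'' hedge is unnecessary.  Second, the theorem as stated in the paper requires $m$ to be a \emph{positive} integer, whereas your induction (correctly, in line with the cited reference) can terminate with $m=0$ when $J$ cuts out a zero-dimensional variety.  This is a latent imprecision in the paper's statement rather than a gap in your proof, and it is harmless for the paper's applications, where $V$ is always assumed to have dimension $m\geq 1$.
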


When property \eqref{item:1} of the theorem holds, we write $\CC[z_1,\dots,z_m]
\subseteq \CC[z_1,\dots,z_n]/I$.  This inclusion is called a
\emph{Noether normalization}.  All Noether normalizations used in this
paper will be assumed to satisfy the additional condition
\eqref{item:2} of the theorem since the degree condition in \eqref{eqn:noether} will be important.

The grevlex ordering, which we will denote here by $<_{gr}$, is the
ordering defined on $\ZZ_{\geq 0}^n$ by $\alpha<_{gr}\beta$ if:
\begin{enumerate}
\item $|\alpha|<|\beta|$; or, 
\item $|\alpha|=|\beta|$, and for some $i\in\{1,\dots,n\}$ we have
  $\alpha_i<\beta_i$ and $\alpha_j=\beta_j,\ \forall\,j<i$. 
\end{enumerate}
Define grevlex on monomials by putting $z^{\alpha}<_{gr}z^{\beta}$ if
$\alpha<_{gr}\beta$. More precisely, this gives the grevlex ordering
with $z_1<_{gr}z_2<_{gr}\cdots<_{gr}z_n$.  Note that $|\alpha| <
|\beta|$ implies $z^{\alpha}<_{gr}z^{\beta}$. A monomial ordering that
satisfies this property is called a \emph{graded ordering}.

Denote by $\LT(p)$ the leading term of a polynomial with respect to
grevlex, and for an ideal $I$ put $\LT(I):=\{\LT(p):p\in I\}$.  It is
well-known that for each element of $\CC[z_1,\dots,z_n]/I$ there is a
unique polynomial representative, the \emph{normal form (with respect to grevlex)}, which
contains no monomials in the ideal $\langle\LT(I)\rangle$.  If an
element of $\CC[z_1,\dots,z_n]/I$ contains the polynomial $p$, then
the normal form $r$ may computed in practice as the remainder on
dividing $p$ by a Groebner basis of $I$ (cf.\
\cite{coxlittleoshea:ideals}, \S5.3).  

Write $\CC[z]_I=\CC[z_1,\dots,z_n]_I$ for the collection of normal
forms of elements of $\CC[z]/I = \CC[z_1,\dots,z_n]/I$.  As a vector
space, $\CC[z]_I$ has a basis consisting of all monomials $z^\gamma
\notin \langle \LT(I)\rangle$.  We can give $\CC[z]_I$ the structure
of an algebra over $\CC$ with multiplication operation given by
\[
(r_1,r_2)\longmapsto \hbox{\sl ``the normal form of $r_1r_2$''}.
\]
We will usually denote this by $r_1r_2$, though we will write 
$r_1{*}r_2$ when we want to emphasize that this is the normal form of the
ordinary product.  Note that
$\CC[z]_I$ and $\CC[z]/I$ are isomorphic as $\CC$-algebras, where the
isomorphism is given by identifying normal forms with their polynomial
classes.  

Hilbert functions play an important role in some of our proofs.  We
begin with $\CC[z]_{\le s} = \CC[z_1,\dots,z_n]_{\le s}$, which
consists of polynomials of degree $\le s$.  Recall that
\begin{equation}
\label{eqn:HFCz}
\dim\,\CC[z_1,\dots,z_n]_{\leq s}=\binom{s+n}{n}=
\frac{(s+n)\cdots(s+1)}{n!} = 
\frac{1}{n!}s^n + O(s^{n-1}).
\end{equation}
Then $(\CC[z]/I)_{\le s}$ consists of all classes represented by a
polynomial of degree $\le s$.  The dimension $\dim\, (\CC[z]/I)_{\le
  s}$ gives the Hilbert function of $I$.  We also define $\CC[z]_{I\le s}$
to consist of all normal forms of degree $\le s$.  Since $<_{gr}$ is a
graded order, the isomorphism $\CC[z]_I \simeq \CC[z]/I$ induces an
isomorphism
\[
\CC[z]_{I\le s} \simeq (\CC[z]/I)_{\le s}
\]
(see \cite{coxlittleoshea:ideals}, \S9.3).  This has two useful
consequences: 
\begin{itemize}
\item The Hilbert function $\dim (\CC[z]/I)_{\le s}$ is given by the
  number of monomials $z^\gamma \notin \langle\LT(I)\rangle$ of degree
  $\le s$.
\item If $r_1 \in \CC[z]_{I\le s}$ and $r_2 \in \CC[z]_{I\le t}$, then
  $r_1{*}r_2 \in \CC[z]_{I\le s+t}$.
\end{itemize}

A Noether normalization $\CC[z_1,\dots,z_m]\subseteq\CC[z]/I$ has the
following properties.

\begin{proposition} 
\label{prop:2.2a}
Every element of $\CC[z_1,\dots,z_m]$ is a normal form, so that
\[
\CC[z_1,\dots,z_m] \subseteq \CC[z]_I.
\]
Furthermore, for $i=m+1,\dots,n$, we have
$z_i^{d_i}\in\langle\LT(I)\rangle$, where $d_i$ is as in
\eqref{eqn:noether}.
\end{proposition}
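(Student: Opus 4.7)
The plan is to prove the two assertions in the order stated, but it is cleaner to dispatch the second one first and then use it as an input for the first.

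For $z_i^{d_i}\in\lI$ ($i>m$): I would compute $\LT(g_i)$ directly from~\eqref{eqn:noether}. Any monomial of $g_i$ other than $z_i^{d_i}$ has the form $z^\gamma z_i^j$ with $\gamma$ supported in $\{1,\dots,i-1\}$, $j<d_i$ and $|\gamma|+j\leq d_i$. If $|\gamma|+j<d_i$ the monomial has strictly smaller total degree than $z_i^{d_i}$ and is thus smaller in the graded order. If $|\gamma|+j=d_i$ then $|\gamma|\geq 1$, and the exponent difference $d_i\be_i-(\gamma+j\be_i)=-\gamma+(d_i-j)\be_i$ has first nonzero entry $-\gamma_k<0$, where $k$ is the least index with $\gamma_k>0$; by the grevlex rule (with $z_1<\cdots<z_n$) this makes $z_i^{d_i}$ larger. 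Hence $\LT(g_i)=z_i^{d_i}\in \LT(I)\subseteq\lI$.

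For the first assertion, I would argue by contradiction that no monomial $z^\alpha$ supported in $\{1,\dots,m\}$ lies in $\lI$. Suppose one does, and choose $\alpha$ minimal in the grevlex ordering among such exponents. Since $\lI$ is a monomial ideal, $z^\alpha$ is divisible by $\LT(q)$ for some $q\in I$; the divisor $\LT(q)$ is itself a monomial in $\CC[z_1,\dots,z_m]$, and minimality of $\alpha$ forces $\LT(q)=z^\alpha$. Set $p=q$, so $p\in I$ with $\LT(p)=z^\alpha$. Using the $g_i$'s, whose leading terms $z_i^{d_i}$ were just identified, I would reduce $p$ modulo $\{g_{m+1},\dots,g_n\}\subseteq I$ to obtain $p'\in I$ in which every monomial has $z_i$-degree strictly less than $d_i$ for each $i>m$. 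The leading term $z^\alpha$ survives because $\alpha_i=0<d_i$ for $i>m$, so $z^\alpha$ is never divisible by any $\LT(g_i)$.

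The main obstacle will be to conclude from this that $p'$ lies entirely in $\CC[z_1,\dots,z_m]$; then property~(1) would force $p'=0$, contradicting $\LT(p')=z^\alpha$. A natural route is a Hilbert-function argument: the injection $\CC[z_1,\dots,z_m]\hookrightarrow\CC[z]/I$ gives $\dim(\CC[z]/I)_{\leq s}\geq\binom{s+m}{m}$, while the bound $z_i^{d_i}\in\lI$ restricts every basis monomial of $\CC[z]_I$ to the region $\gamma_i<d_i$ for $i>m$; splitting the normal-form basis into monomials supported in $\{1,\dots,m\}$ versus the rest and comparing counts should force every pure $\CC[z_1,\dots,z_m]$-monomial to lie in the basis. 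Alternatively, one can invoke the standard compatibility of Noether normalization with grevlex recorded in~\cite{greuelpfister:singular}.
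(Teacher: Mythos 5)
Your proof of the second assertion is correct and, if anything, more explicit than the paper, which simply remarks that the degree condition $\deg(h_{ij})+j\leq d_i$ and the grevlex rule make $\LT(g_i)=z_i^{d_i}$ ``easy to see.'' Your case analysis (lower total degree, versus equal degree with a nonzero $\gamma$ supported in $\{1,\dots,i-1\}$) is exactly the right computation.

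The first assertion, however, is not actually proved. You correctly flag the central obstacle: after reducing $p$ modulo $g_{m+1},\dots,g_n$ you only know that every monomial of $p'$ has $z_i$-exponent $<d_i$ for $i>m$, which is strictly weaker than $p'\in\CC[z_1,\dots,z_m]$. Indeed, for $m\geq 2$ there are normal-form monomials of the same degree as $z^\alpha$, smaller in grevlex, and involving $z_{m+1},\dots,z_n$; a simple example is $\alpha=(0,\dots,0,2,0,\dots,0)$ (the $2$ in position $m$) versus $\gamma$ with $\gamma_1=1$, $\gamma_{m+1}=1$, all else $0$. So the reduction route, by itself, is a dead end, and the appeal to ``Noether normalization is compatible with grevlex'' from \cite{greuelpfister:singular} replaces the proof with a citation rather than supplying one.

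Your Hilbert-function fallback is in the right neighborhood but is missing the decisive ingredient. Knowing only $\gamma_i<d_i$ for $i>m$ bounds the number of normal-form exponents of degree $\leq s$ by $d_{m+1}\cdots d_n\cdot\binom{s+m}{m}=O(s^m)$, which is the same order of growth as $\binom{s+m}{m}$, so your proposed ``split into monomials supported in $\{1,\dots,m\}$ versus the rest and compare counts'' cannot close the argument. What is needed --- and what the paper actually uses --- is a \emph{second} constraint coming directly from the hypothesis that $z^\alpha\in\langle\LT(I)\rangle$: since $z^\alpha$ then cannot divide any normal-form monomial $z^\gamma$, every such $\gamma$ must satisfy $\gamma_i<\alpha_i$ for \emph{some} $i\in\{1,\dots,m\}$. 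Combined with $\gamma_i<d_i$ for \emph{all} $i>m$, this confines the normal-form exponents of degree $\leq s$ to a union of $m$ slabs each of cardinality $O(s^{m-1})$, so $\dim(\CC[z]/I)_{\leq s}=O(s^{m-1})$. That contradicts $\dim(\CC[z]/I)_{\leq s}\geq\binom{s+m}{m}=\frac{1}{m!}s^m+O(s^{m-1})$, which follows from the injection $\CC[z_1,\dots,z_m]\hookrightarrow\CC[z]/I$. Note also that this argument needs no minimal choice of $\alpha$ and no reduction modulo the $g_i$: one works directly with the assumed $z^\alpha$.
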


\begin{proof}
For the second assertion of the proposition, suppose
$i\in\{m+1,\dots,n\}$ and $g_i\in I$ is as in \eqref{eqn:noether}.
Then the definition of grevlex and the degree condition in
\eqref{eqn:noether} makes it  easy to see that $\LT(g_i)=z_i^{d_i}$,
which implies $z_i^{d_i}\in\langle\LT(I)\rangle$.

Since normal forms are known to form a subspace, it suffices to show
that every monomial in $\CC[z_1,\dots,z_m]$ is a normal form.  Let
$\alpha=(\alpha_1,\alpha_2,\dots,\alpha_m,0,\dots,0)$, so that
$z^{\alpha}=z_1^{\alpha_1}z_2^{\alpha_2}\cdots z_m^{\alpha_m}$.  We
want to show that $z^{\alpha}\not\in\langle\LT(I)\rangle$.

Suppose not, i.e., $z^{\alpha}\in\langle\LT(I)\rangle$.  We will
obtain a contradiction by studying the Hilbert function.  Take
$z^{\gamma}\notin\langle\LT(I)\rangle$, where
$\gamma=(\gamma_1,\dots,\gamma_n)$.  
If $i\geq m+1$, then $z_i^{d_i} \in \langle\LT(I)\rangle$, so $z_i^{d_i}$ cannot divide
$z^{\gamma}$. Hence  
\begin{equation}
\label{eqn:2.2A}
\gamma_i < d_i, \quad \text{for \emph{all} } i = m+1,\dots,n.
\end{equation} 
Furthermore, $z^\alpha\in\langle\LT(I)\rangle$, so $z^\alpha$
cannot divide $z^\gamma$. Then
\begin{equation}
\label{eqn:2.2B}
\gamma_i < \alpha_i, \quad \text{for \emph{some} } i=1,\dots,m.
\end{equation}
 
Now let
\[
L(s) \ := \ \{\gamma:
z^{\gamma}\notin\langle\LT(I)\rangle,\ |\gamma|\leq s\},  
\]
so that $|L(s)| = \dim\, (\CC[z]/I)_{\le s}$ is the Hilbert function.
Also, for $i =1,\dots,m$, let
\[
L_i(s) = \{\gamma \in L(s) : \gamma_i < \alpha_i \text{ and
}\gamma_{m+1} < d_{m+1},\dots,\gamma_n < d_n \}.
\]
Then \eqref{eqn:2.2A} and \eqref{eqn:2.2B} imply that
\begin{equation}
\label{eqn:2.2C}
L(s) \subseteq L_1(s) \cup \cdots \cup L_m(s).
\end{equation}
Observe that 
\[
|L_i(s)| \le \alpha_i \cdot d_{m+1} \cdots d_n \cdot 
\dim\CC[z_1,\dots,\hat{z}_i,\dots,z_m]_{\le s}.
\]
Combining this with \eqref{eqn:HFCz} and \eqref{eqn:2.2C}, we obtain
$|L(s)|= O(s^{m-1})$.  
It follows that 
\begin{equation} \label{eqn:2.2D}
\dim\, (\CC[z]/I)_{\le s} = O(s^{m-1}).
\end{equation}
On the other hand, the inclusion $\CC[z_1,\dots,z_m] \subseteq
\CC[z]/I$ gives an inclusion 
\[
\CC[z_1,\dots,z_m]_{\le s} \subseteq
(\CC[z]/I)_{\le s},
\]
and then \eqref{eqn:HFCz} implies $\dim\, (\CC[z]/I)_{\le s} \ge
\frac{1}{m!}s^m + O(s^{m-1})$.  This contradicts \eqref{eqn:2.2D} and completes the proof.
\end{proof}

\section{Constructing an ordered basis} 
\label{sec:3}

In what follows we will use the following standard notation.

\begin{notation}\rm 
Given a set of polynomials $I\subseteq\CC[z_1,\dots,z_n]=\CC[z]$,
write
\[
\bV(I):=\{(a_1,\dots,a_n)\in\CC^n: p(a_1,\dots,a_n)=0 \hbox{ for all }
p\in S\},
\]
and given a set $V\subseteq\CC^n$, write
\[
\bI(V):=\{p\in\CC[z]: p(a_1,\dots,a_n)=0 \hbox{ for all }
(a_1,\dots,a_n) \in V\}.
\]
\end{notation}

Let $V\subseteq\CC^n$ be an affine algebraic variety of pure dimension
$m$ ($m\leq n$).  Here, ``pure'' means that all irreducible components
of $V$ have dimension $m$.  If we set
$I:=\bI(V)\subseteq\CC[z_1,\dots,z_n]$, then the coordinate ring
$\CC[V]$ of polynomial functions on $V$ satisfies
\[
\CC[V] \simeq \CC[z]/I \simeq \CC[z]_I.
\]
In what follows, we will use these isomorphisms to identify $\CC[V]$
with $\CC[z]_I$ and write $\CC[V]=\CC[z]_I$.

We will construct a special basis of $\CC[V]$ by doing interpolation
at infinity.  Identify $(a_1,\dots,a_n)\in\CC^n$ with
$[1:a_1:\cdots:a_n]\in\PP^n$; the hyperplane at infinity is
then 
\[
H_{\infty}:=\{[a_0:a_1:\cdots:a_n]\in\PP^n : a_0=0\}
\]
and we write $\CC^n\cup H_{\infty}=\PP^n$.  Denote by $\bar
V\subseteq\PP^n$ the projective closure of $V$, which may be computed
as follows.  If $I=\bI(V)\subseteq\CC[z] = \CC[z_1,\dots,z_n]$, let
\[
I^h: = \{p^h\in\CC[z_0,\dots,z_n]: p\in I\},
\] 
where $p(z) = \sum_{|\alpha|\leq d} c_{\alpha}z^{\alpha} \in \CC[z]$
of degree $d$ homogenizes to 
\[
p^h(z_0,z) := \sum_{|\alpha|\leq d} c_{\alpha} z_0^{d-|\alpha|}
z^{\alpha} \in \CC[z_0,z] = \CC[z_0,z_1,\dots,z_n].
\]
Then the projective closure $\bar V \subseteq \PP^n$ is given by
\[
\bar V = \bV(I^h) = \{[a_0:\cdots:a_n]\in\PP^n : p(a_0,\dots,a_n)=0
\hbox{ for all } p\in I^h\}.
\]
 
Note that $I^h$ is a homogeneous ideal (i.e., it is generated by
homogeneous polynomials).  For a homogeneous ideal
$J\subseteq\CC[z_0,\dots,z_n]$ we will write
\begin{eqnarray*}
J_t &=& \{p\in J: p \hbox{ is homogeneous, } \deg p=t\}, \quad
\hbox{and}\\ (\CC[z_0,\dots,z_n]/J)_t &=& \CC[z_0,\dots,z_n]_t/J_t.
\end{eqnarray*}
 
We will assume that $V$ has the following properties: 
\begin{equation}
\label{eqn:properties}
\begin{aligned}
(0)\ &\text{$V$ is pure of dimension $m$ and has degree $d$.}\\
(1)\ &R:=\CC[z_1,\dots,z_m]\subseteq\CC[V]\ \text{is a Noether
    normalization as above.}\\ 
(2)\ &\bar{V}\cap P\ \text{consists of $d$ distinct points, where
    $\bar V$ is the projective}\\[-3pt] 
&\text{closure of $V$ in $\PP^n$ and
    $P=\bV(z_0,\dots,z_{m-1})\subseteq\PP^n$.}\\ 
(3)\ & \text{If $\bar V\cap P = \{p_1,\dots,p_d\}$, with
    $p_i=[0:\cdots:0:p_{im}:\cdots:p_{in}]$,}\\[-3pt] 
&\text{then for each $i$, $p_{im}\neq 0$.}
\end{aligned}
\end{equation}

Note that $\bar V\subseteq\PP^n$ is pure of dimension $m$ and has
degree $d$, while $P\subseteq\PP^n$ is a linear space of dimension
$n-m$ and has degree $1$.  Since $\bar V\cap P$ is finite by property
(3), Bezout's theorem implies that $\bar V\cap P$ consists of $d \cdot
1 = d$ points counted with multiplicity.  Property (3) then implies
that the multiplicities of the $p_i$ are all one, so that
\[
\bV(I^h+\langle z_0,\dots,z_{m-1}\rangle) = \{p_1,\dots,p_d\}
\] 
as subschemes of $\PP^n$.  

It follows that the homogeneous ideals $I^h+\langle
z_0,\dots,z_{m-1}\rangle$ and $\bI(\{p_1,\dots,p_d\})$ define the same
subscheme of $\PP^n$.  Hence there is an integer $t_0 \ge 0$ such that
\begin{eqnarray*}
(I^h+\langle z_0,\dots,z_{m-1}\rangle)_t &=&
  (\bI(\{p_1,\dots,p_d\}))_t \\ &=& \{f\in\CC[z_0,\dots,z_n]_t :
  f(p_i)=0, \ \forall\; i=1,\dots,d\}
\end{eqnarray*}
when $t\geq t_0$ (see \cite{hartshorne:algebraic}, II.5). 

A polynomial $f\in\CC[z_0,\dots,z_n]_t$ gives a function on
\begin{equation}\label{eqn:3.2new}
U_m=\{z=[z_0:z_1:\cdots:z_n]\in\PP^n: z_m\neq 0\},
\end{equation}
via 
$[a_0:\cdots:a_n]\mapsto a_m^{-t}f(a_0,\dots,a_n)$. It is easy to see that the computation is independent of homogeneous coordinates.  For convenience this local evaluation will be denoted by $f(a)$.

\begin{lemma}
The map $\CC[z_0,\dots,z_n]_t\to \CC^d$ given by
$f\mapsto(f(p_1),\dots,f(p_d))$ is onto for $t\gg0$. 
\end{lemma}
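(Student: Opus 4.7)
The plan is to identify the kernel of the evaluation map with the degree $t$ component of the ideal of the point set, then invoke the Hilbert polynomial of a zero-dimensional subscheme. Since each $p_i\in U_m$ (by property (3) of \eqref{eqn:properties}, $p_{im}\neq 0$), the local evaluation $f\mapsto f(p_i)=p_{im}^{-t}f(p_{i0},\dots,p_{in})$ is a well-defined $\CC$-linear functional on $\CC[z_0,\dots,z_n]_t$, and is independent of the choice of homogeneous coordinates representing $p_i$. Let $\varphi_t:\CC[z_0,\dots,z_n]_t\to\CC^d$ be the joint evaluation $f\mapsto(f(p_1),\dots,f(p_d))$. By construction, $\ker\varphi_t = (\bI(\{p_1,\dots,p_d\}))_t$.

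Next, $\{p_1,\dots,p_d\}\subseteq\PP^n$ is a reduced zero-dimensional subscheme of degree $d$, so its Hilbert polynomial is the constant $d$; equivalently, $\dim(\CC[z_0,\dots,z_n]/\bI(\{p_1,\dots,p_d\}))_t=d$ for $t$ sufficiently large (see \cite{hartshorne:algebraic}, I.7 and III, Ex.~5.2). Rank-nullity then gives
\[
\dim\,\mathrm{Im}(\varphi_t) = \dim\CC[z_0,\dots,z_n]_t - \dim(\bI(\{p_1,\dots,p_d\}))_t = d,
\]
so $\varphi_t$ is onto for $t\gg 0$, as desired.

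As a more concrete alternative, one can exhibit explicit interpolators. Since the $p_i$ are pairwise distinct in $\PP^n$, for every ordered pair $i\neq j$ there is a linear form $\ell_{ij}\in\CC[z_0,\dots,z_n]_1$ with $\ell_{ij}(p_i)\neq 0$ and $\ell_{ij}(p_j)=0$. For $t\geq d-1$, set
\[
f_i \ := \ z_m^{\,t-(d-1)}\prod_{j\neq i}\ell_{ij} \ \in\ \CC[z_0,\dots,z_n]_t.
\]
Evaluating in the chart $U_m$ using the formula $f(a)=a_m^{-t}f(a_0,\dots,a_n)$, one checks $f_i(p_j)=0$ for $j\neq i$ and $f_i(p_i)=p_{im}^{-(d-1)}\prod_{j\neq i}\ell_{ij}(p_i)\neq 0$. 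After rescaling, $\varphi_t(f_i)$ is the $i$-th standard basis vector of $\CC^d$, and surjectivity follows.

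The only real point that needs care is the well-definedness of the local evaluation on $U_m$ and the identification of $\ker\varphi_t$; both are routine. In particular, this lemma provides the threshold $t_0$ used in the preceding discussion, since it implies that $\bI(\{p_1,\dots,p_d\})$ and $I^h+\langle z_0,\dots,z_{m-1}\rangle$ have the same (constant $d$) Hilbert function value in all sufficiently high degrees.
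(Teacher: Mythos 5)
Your proof is correct, and in fact you offer two valid arguments. Your second (``more concrete'') alternative is essentially the paper's approach: the authors build affine interpolators $w_i$ in $\CC[z_0,\dots,z_{m-1},z_{m+1},\dots,z_n]$ with $w_i(q_j)=\delta_{ij}$ and then homogenize by multiplying through by $z_m^t$, whereas you work directly in projective space with separating linear forms $\ell_{ij}$ and pad with a power of $z_m$. Both versions exploit property (3) of \eqref{eqn:properties} in the same way, namely that $p_{im}\neq 0$ makes $z_m$ a unit near each $p_i$.

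Your first argument, via the Hilbert polynomial of the reduced zero-dimensional subscheme $\{p_1,\dots,p_d\}$ together with rank--nullity, is a genuinely different and more abstract route: it establishes surjectivity by a dimension count rather than by exhibiting interpolators. This is cleaner as a pure existence proof, but note that the paper wants the explicit $v_i$ in hand, since they are carried forward (in Corollary \ref{cor:3.3}, equations \eqref{eqn:2.6c}, \eqref{eqn:1.2}, \eqref{eqn:1.3}, and Lemma \ref{lem:2.4}) to define the $\bv_i\in\CC[V]_{=t}$ that form the backbone of the basis $\calC$. So while the Hilbert-function argument suffices for the stated lemma, the constructive interpolation is the version that actually feeds the rest of Section \ref{sec:3}.
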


\begin{proof}
By property (3) of \eqref{eqn:properties}, the points $p_1,\dots,p_d$ are in the affine
chart $U_m$ given by \eqref{eqn:3.2new}.  For each $i=1,\dots,d$ and
$p_i=[0:\cdots:0:1:u_{i(m+1)}:\cdots:u_{in}]$, put $q_i :=
(0,\dots,0,u_{i(m+1)},\dots,u_{in})\in\CC_m^{n}$, where $\CC_m^n$
denotes affine space with coordinates
$(z_0,\dots,z_{m-1},z_{m+1},\dots,z_n)$.  It is standard that one can
find interpolating polynomials $w_1,\dots,w_d$ in
$\CC[z_0,\dots,z_{m-1},z_{m+1},\dots,z_n]$ such that
$w_i(q_j)=\delta_{ij}$.

Pick any $t\geq\max(\deg w_1,\dots,\deg w_d)$ and set
\begin{equation}
\label{eqn:2.5c}
v_i:= z_m^tw_i(z_0/z_m,\dots,z_{m-1}/z_m,z_{m+1}/z_m,\dots,z_n/z_m).
\end{equation}
This is a homogeneous polynomial of degree $t$ in $z_0,\dots,z_n$ and its evaluation on $U_m$
satisfies $v_i(p_j)=\delta_{ij}$.  For each $i$, the polynomial
$v_i\in\CC[z_0,\dots,z_n]$ evaluates to the standard basis vector
$(0,\dots,0,1,0,\dots,0)=e_i\in\CC^d$ (the $1$ is in the $i$-th slot),
so the map is onto.
\end{proof}

\begin{corollary}\label{cor:3.3}
For $t\gg0$, we have an exact sequence
\[
0\longrightarrow (I^h+\langle z_0,\dots,z_{m-1}\rangle)_t
\longrightarrow\CC[z_0,\dots,z_n]_t\longrightarrow\CC^d\longrightarrow
0. 
\]
Thus there are polynomials $v_1,\dots,v_d\in\CC[z_0,\dots,z_n]_t$,
unique up to elements of $(I^h+\langle z_0,\dots,z_{m-1}\rangle)_t$,
such that $v_i(p_j)=\delta_{ij}$. \qed
\end{corollary}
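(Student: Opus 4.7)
The plan is to observe that the corollary is essentially a packaging of the previous lemma together with the identification $(I^h+\langle z_0,\dots,z_{m-1}\rangle)_t = (\bI(\{p_1,\dots,p_d\}))_t$ that was established in the paragraph before the lemma.

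First I would introduce the evaluation map $\phi_t : \CC[z_0,\dots,z_n]_t \to \CC^d$ defined by $\phi_t(f) = (f(p_1),\dots,f(p_d))$, where $f(p_i)$ is the local evaluation at the point $p_i \in U_m$ as in the preceding lemma. Then by construction, $\ker \phi_t = \{f \in \CC[z_0,\dots,z_n]_t : f(p_i) = 0 \text{ for all } i\} = (\bI(\{p_1,\dots,p_d\}))_t$. By the scheme-theoretic discussion preceding the lemma, there is an integer $t_0$ such that
\[
(I^h + \langle z_0,\dots,z_{m-1}\rangle)_t = (\bI(\{p_1,\dots,p_d\}))_t
\]
for all $t \geq t_0$, so $\ker \phi_t = (I^h + \langle z_0,\dots,z_{m-1}\rangle)_t$ in that range.

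Next, the lemma gives surjectivity of $\phi_t$ for $t \gg 0$, say $t \geq t_1$. Taking $t \geq \max(t_0,t_1)$, the short exact sequence
\[
0 \longrightarrow (I^h + \langle z_0,\dots,z_{m-1}\rangle)_t \longrightarrow \CC[z_0,\dots,z_n]_t \xrightarrow{\;\phi_t\;} \CC^d \longrightarrow 0
\]
then follows immediately. For the second assertion, existence of $v_1,\dots,v_d \in \CC[z_0,\dots,z_n]_t$ with $v_i(p_j) = \delta_{ij}$ is simply surjectivity applied to the standard basis vectors $e_1,\dots,e_d \in \CC^d$, while uniqueness up to $(I^h + \langle z_0,\dots,z_{m-1}\rangle)_t$ is just the exactness of the sequence at the middle term.

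There is no real obstacle here; the only subtlety is bookkeeping the threshold $t$. Both the surjectivity (from the lemma) and the identification of ideals (from the Hilbert-function/Bezout discussion in \eqref{eqn:properties}) hold only for sufficiently large $t$, so the statement must be quantified as "for $t \gg 0$", and the corollary's $t$ should be understood as the maximum of the two thresholds.
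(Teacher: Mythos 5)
Your proposal is correct and is exactly the argument the paper has in mind: the corollary is left with an immediate $\qed$ precisely because surjectivity is the content of the preceding Lemma and the identification of the kernel $(I^h+\langle z_0,\dots,z_{m-1}\rangle)_t = (\bI(\{p_1,\dots,p_d\}))_t$ for $t\geq t_0$ was established in the scheme-theoretic paragraph just before it. Your bookkeeping of the two thresholds and the observation that uniqueness up to the ideal is just exactness at the middle term matches the intended reading.
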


Now fix such a $t$ and let $\calS:=\CC[z_0,\dots,z_n]/( I^h+\langle
z_0,\dots,z_{m-1}\rangle)$.  If we regard the polynomials
$v_1,\dots,v_d$ in the above corollary as elements of $\calS_t$, then
they have the following properties:
\begin{equation} 
\label{eqn:2.6c}
v_i^2 = z_m^tv_i \hbox{ for all }i=1,\dots,d; \quad \hbox{and }
v_iv_j =0 \hbox{ whenever }i\neq j.
\end{equation}

\begin{lemma}
\label{lem:Stau}
For any $\tau\geq t$, the polynomials
$\{z_m^{\tau-t}v_i\}_{i=1}^d$ form a basis of $\calS_{\tau}$. 
\end{lemma}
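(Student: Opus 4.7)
The plan is to prove linear independence and spanning simultaneously, both by evaluating at the $d$ points $p_1,\dots,p_d$. Since property (3) of \eqref{eqn:properties} places each $p_j$ in the affine chart $U_m$ where $z_m\neq 0$, the local evaluation of a homogeneous polynomial of degree $\tau$ at $p_j$ makes sense, and in particular
\[
(z_m^{\tau-t}v_i)(p_j) \;=\; 1^{\tau-t}\cdot v_i(p_j) \;=\; \delta_{ij}
\]
after normalizing the homogeneous coordinates of $p_j$ so that $p_{jm}=1$.

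First I would establish linear independence in $\calS_\tau$. Suppose $\sum_{i=1}^d c_i\, z_m^{\tau-t}v_i \equiv 0$ in $\calS_\tau$; that is, the polynomial lies in $(I^h+\langle z_0,\dots,z_{m-1}\rangle)_\tau$. Every element of this ideal vanishes at each $p_j$, so evaluating at $p_j$ and using $(z_m^{\tau-t}v_i)(p_j)=\delta_{ij}$ yields $c_j=0$ for all $j$.

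Next I would show spanning. Given any $f\in\CC[z_0,\dots,z_n]_\tau$, set $c_i:=f(p_i)$ (local evaluation) and consider
\[
g \;:=\; f \;-\; \sum_{i=1}^d f(p_i)\, z_m^{\tau-t}v_i.
\]
Then $g\in\CC[z_0,\dots,z_n]_\tau$ and $g(p_j)=0$ for every $j$. Since the choice of $t$ in Corollary \ref{cor:3.3} is ``$t\gg 0$,'' we may (and do) assume $t\geq t_0$, the threshold past which $(I^h+\langle z_0,\dots,z_{m-1}\rangle)_\tau$ coincides with $(\bI(\{p_1,\dots,p_d\}))_\tau$, as discussed just after \eqref{eqn:properties}. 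Since $\tau\geq t\geq t_0$, we conclude $g\in(I^h+\langle z_0,\dots,z_{m-1}\rangle)_\tau$, so that $f\equiv\sum f(p_i)\,z_m^{\tau-t}v_i$ in $\calS_\tau$. Together with linear independence this shows $\{z_m^{\tau-t}v_i\}_{i=1}^d$ is a basis.

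The only step that requires any care is the verification of the evaluation identity $(z_m^{\tau-t}v_i)(p_j)=\delta_{ij}$, i.e., that multiplying $v_i$ by a power of $z_m$ does not change its values at the $p_j$ under the local-evaluation convention introduced above Corollary \ref{cor:3.3}. This is automatic from the definition of local evaluation on $U_m$ once we normalize $p_{jm}=1$, so there is no real obstacle; everything else is bookkeeping with the identification of ideal components in degrees $\geq t_0$.
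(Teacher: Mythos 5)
Your proof is correct and follows essentially the same route as the paper: the key observation in both is that multiplying $v_i$ by a power of $z_m$ leaves the local evaluation at each $p_j$ unchanged (since $p_{jm}\neq0$), so $z_m^{\tau-t}v_i$ again interpolates the $p_j$'s, and the identification $(I^h+\langle z_0,\dots,z_{m-1}\rangle)_\tau=(\bI(\{p_1,\dots,p_d\}))_\tau$ for $\tau\geq t_0$ does the rest. The paper's one-line proof compresses this by noting that the construction \eqref{eqn:2.5c} applied at degree $\tau$ directly produces $z_m^{\tau-t}v_i$ and then silently invokes Corollary \ref{cor:3.3} at degree $\tau$; you have simply unpacked that exact sequence into the explicit linear-independence and spanning checks via evaluation at the $p_j$.
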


\begin{proof}
The construction \eqref{eqn:2.5c} applied to $\tau$ (in place of $t$) gives the
additional powers of $z_m$. 
\end{proof}

When we consider the $v_i$'s as polynomials in 
$\CC[z_0,\dots,z_n]/(I^h +\langle z_0\rangle)$, we have
\begin{eqnarray}
v_i^2 &=& z_m^tv_i + \sum_{k=1}^{m-1}
z_kH_k(z_1,\dots,z_n), \label{eqn:1.2} \\ 
v_iv_j &=& \sum_{k=1}^{m-1} z_kQ_k(z_1,\dots,z_n), \label{eqn:1.3}
\end{eqnarray}
where for each $k$, $H_k(z_1,\dots,z_n)$ and $Q_k(z_1,\dots,z_n)$ are
homogeneous polynomials of degree $2t-1$. 

The next step is to translate the $v_i$ into polynomials $\bv_i$ in
$\CC[V]$, paying careful attention to their degrees and the analogs
of \eqref{eqn:1.2} and \eqref{eqn:1.3}.  Let $\CC[V]_{\leq t} =
\CC[z]_{I\le t}$ be the collection of normal forms of degree $\leq
t$, and let $\CC[V]_{=t}$ be those that are homogeneous of degree $t$.

\begin{lemma} \label{lem:3.5}
We have $\CC[V]_{=t}\simeq\CC[V]_{\leq t}/\CC[V]_{\leq
  t-1}\simeq(\CC[z_0,\dots,z_n]/(I^h+\langle z_0\rangle))_t$. 
\end{lemma}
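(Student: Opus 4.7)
The strategy is to handle the two isomorphisms independently. The first is a purely formal consequence of the graded decomposition of normal forms; the second follows from the classical dehomogenization bijection relating degree-$\le t$ pieces of $\CC[z]/I$ with degree-$t$ pieces of $\CC[z_0,\dots,z_n]/I^h$, together with the standard exact sequence
\[
0 \to (\CC[z_0,\dots,z_n]/I^h)_{t-1} \xrightarrow{\;z_0\cdot\;} (\CC[z_0,\dots,z_n]/I^h)_t \to (\CC[z_0,\dots,z_n]/(I^h+\langle z_0\rangle))_t \to 0.
\]

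For the first isomorphism, recall that $\CC[V] = \CC[z]_I$ has the monomial basis $\{z^\gamma : z^\gamma \notin \langle \LT(I)\rangle\}$. Because $<_{gr}$ is a graded order, this basis respects total degree, so the monomials with $|\gamma|\le t-1$ span $\CC[V]_{\le t-1}$, those with $|\gamma|=t$ span $\CC[V]_{=t}$, and together they span $\CC[V]_{\le t}$. Hence $\CC[V]_{\le t} = \CC[V]_{\le t-1}\oplus \CC[V]_{=t}$ as vector spaces, and the quotient isomorphism follows.

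For the second isomorphism, I would introduce the dehomogenization map
\[
\phi : \CC[z_0,\dots,z_n]_t \longrightarrow \CC[z]_{\le t}, \qquad f \longmapsto f(1,z_1,\dots,z_n).
\]
Writing $f = \sum_{|\alpha|\le t} c_\alpha\, z_0^{t-|\alpha|}z^\alpha$ shows that $\phi$ is a $\CC$-linear isomorphism with inverse $p \mapsto z_0^{t-\deg p}p^h$. Under $\phi$ the subspace $(I^h)_t$ corresponds exactly to $I \cap \CC[z]_{\le t}$: a degree-$t$ homogeneous element of $I^h$ dehomogenizes into $I$, and conversely any $p\in I$ with $\deg p \le t$ lifts to $z_0^{t-\deg p}p^h \in (I^h)_t$. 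Passing to quotients,
\[
(\CC[z_0,\dots,z_n]/I^h)_t \;\simeq\; \CC[z]_{\le t}/I_{\le t} \;=\; (\CC[z]/I)_{\le t} \;\simeq\; \CC[V]_{\le t}.
\]

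The final step is to identify the effect of further quotienting by $\langle z_0\rangle$. Since $\phi(z_0\cdot g) = g(1,z_1,\dots,z_n)$ for $g\in\CC[z_0,\dots,z_n]_{t-1}$, multiplication by $z_0$ corresponds under $\phi$ to the inclusion $\CC[V]_{\le t-1}\hookrightarrow\CC[V]_{\le t}$. Taking cokernels in the displayed exact sequence yields
\[
\bigl(\CC[z_0,\dots,z_n]/(I^h+\langle z_0\rangle)\bigr)_t \;\simeq\; \CC[V]_{\le t}/\CC[V]_{\le t-1},
\]
which combined with the first isomorphism completes the proof. The main obstacle is bookkeeping: verifying carefully that $(I^h)_t$ coincides with $\phi^{-1}(I_{\le t})$, which uses that $I^h$ is precisely the homogeneous ideal whose dehomogenization returns $I$. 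Once that identification is in place, everything else is a formal diagram chase.
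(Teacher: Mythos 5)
Your proposal is correct and follows essentially the same route as the paper: both establish the first isomorphism from the graded decomposition of normal forms, and both use the (de)homogenization correspondence $\CC[z_0,\dots,z_n]_t \leftrightarrow \CC[z]_{\le t}$ (the paper writes the map as $p \mapsto z_0^t p(z_1/z_0,\dots,z_n/z_0)$ and cites Cox--Little--O'Shea \S 9.3, while you spell out the dehomogenization direction) to reduce $\CC[V]_{\le t}/\CC[V]_{\le t-1}$ to $(\CC[z_0,\dots,z_n]/(I^h+\langle z_0\rangle))_t$. The only thing to flag is that the short exact sequence you display claims injectivity of multiplication by $z_0$, which holds because $I^h$ is $z_0$-saturated, but is not actually needed — only the cokernel identification enters the argument.
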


\begin{proof}
Writing a normal form as a sum of homogeneous components gives the
direct sum decomposition $\CC[V]_{\le t} = \CC[V]_{=t} \oplus
\CC[V]_{\leq t-1}$, and the first isomorphism follows immediately.

For the second, the map $p \mapsto z_0^t\hskip1pt
p(z_1/z_0,\dots,z_n/z_0)$ induces an isomorphism
\[
\CC[V]_{\le t} \simeq (\CC[z]/I)_{\le t} \simeq (\CC[z_0,z]/I^h)_t
\]
(see \cite{coxlittleoshea:ideals}, \S9.3).  This isomorphism sends
$\CC[V]_{\leq t-1} \subseteq \CC[V]_{\leq t}$ to
$z_0(\CC[z_0,z]/I^h)_{t-1}$, so that we get an isomorphism
\[
\CC[V]_{\le t}/\CC[V]_{\leq t-1} \simeq
(\CC[z_0,z]/I^h)_t/z_0(\CC[z_0,z]/I^h)_{t-1} \simeq
(\CC[z_0,z]/(I^h+\langle z_0\rangle))_t.\qedhere
\]
\end{proof}

\begin{remark} \label{rem:3.6} \rm
Note that multiplication in $\CC[z_0,\dots,z_n]/(I^h+\langle
z_0\rangle)$ corresponds to linear maps $\hsta:\CC[V]_{=t} \times
\CC[V]_{=s}\to\CC[V]_{=s+t}$, where to get $p{\hsta} q$, we compute
$p{*}q$ (the normal form of $pq$) and then take the homogeneous
part of degree $s+t$.
\end{remark}

\begin{lemma}
\label{lem:2.4}
For each $i=1,\dots,d$, there is a polynomial $\bv_i\in\CC[V]_{=t}$
that satisfies the following equations in $\CC[V]$:
\begin{enumerate}
\item[{\rm(1)}] $\bv_i{*}\bv_i = z_m^t{*}\bv_i + \sum_{k=1}^{m-1} z_k{*}h_{k} +
  h_0$ with $\deg(h_{k})\leq 2t-1$ for each $k=0,\dots,m-1$.
\item[{\rm(2)}] $\bv_i{*}\bv_j = \sum_{k=1}^{m-1} z_k{*}q_k + q_0$ if $i\neq j$
  with $\deg(q_k)<2t-1$ for each $k$.
\end{enumerate}
\end{lemma}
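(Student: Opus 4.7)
The plan is to lift the homogeneous relations \eqref{eqn:1.2} and \eqref{eqn:1.3}---which hold in $\CC[z_0,\dots,z_n]/(I^h+\langle z_0\rangle)$---to identities in $\CC[V]$ via Lemma \ref{lem:3.5} and Remark \ref{rem:3.6}, and then convert the resulting graded product $\hsta$ into the normal-form product $*$, absorbing every lower-degree discrepancy into $h_0$ and $q_0$.

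First I would define $\bv_i\in\CC[V]_{=t}$ to be the element corresponding to the class of $v_i$ in $(\CC[z_0,\dots,z_n]/(I^h+\langle z_0\rangle))_t$ under the inverse of the isomorphism in Lemma \ref{lem:3.5}. Concretely, $\bv_i$ is the degree-$t$ homogeneous component of the normal form (modulo $I$) of $v_i(1,z_1,\dots,z_n)$. By Proposition \ref{prop:2.2a}, $\CC[z_1,\dots,z_m]\subseteq\CC[V]$ consists of normal forms, so $z_m^t\in\CC[V]_{=t}$ and each $z_k\in\CC[V]_{=1}$ are already in the appropriate graded pieces. Let $\tilde h_k,\tilde q_k\in\CC[V]_{=2t-1}$ denote the elements corresponding under Lemma \ref{lem:3.5} to the homogeneous polynomials $H_k,Q_k$ of \eqref{eqn:1.2} and \eqref{eqn:1.3}.

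By Remark \ref{rem:3.6}, multiplication in $\CC[z_0,\dots,z_n]/(I^h+\langle z_0\rangle)$ corresponds to $\hsta$ on graded pieces of $\CC[V]$, so \eqref{eqn:1.2} and \eqref{eqn:1.3} translate, in $\CC[V]_{=2t}$, into
\[
\bv_i\hsta\bv_i=z_m^t\hsta\bv_i+\sum_{k=1}^{m-1}z_k\hsta\tilde h_k,\qquad \bv_i\hsta\bv_j=\sum_{k=1}^{m-1}z_k\hsta\tilde q_k\quad(i\neq j).
\]
For homogeneous normal forms $p\in\CC[V]_{=s}$ and $q\in\CC[V]_{=r}$, Remark \ref{rem:3.6} also gives $p{*}q=p\hsta q+\epsilon(p,q)$ with $\epsilon(p,q)\in\CC[V]_{\leq s+r-1}$. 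Applying this to every product above---setting $h_k:=\tilde h_k$ and $q_k:=\tilde q_k$ for $k=1,\dots,m-1$, and collecting the resulting degree-$\leq 2t-1$ error terms into a single summand $h_0$ (resp.\ $q_0$)---yields the two identities of the lemma with the stated degree bounds.

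The main obstacle is simply bookkeeping of graded components: one must check that every $\hsta$-to-$*$ correction lies in $\CC[V]_{\leq 2t-1}$ (built into Remark \ref{rem:3.6}) and that the Noether normalization indeed places $z_k$ and $z_m^t$ in the correct graded pieces of $\CC[V]$ (Proposition \ref{prop:2.2a}). Once these are in hand, the lemma is a direct transcription of \eqref{eqn:1.2}--\eqref{eqn:1.3}.
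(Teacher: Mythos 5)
Your proposal is correct and follows essentially the same argument as the paper: define $\bv_i$, $h_k$, $q_k$ via the isomorphism of Lemma \ref{lem:3.5}, translate \eqref{eqn:1.2}--\eqref{eqn:1.3} into the $\hsta$-product equations via Remark \ref{rem:3.6}, and absorb the $\hsta$-to-$*$ discrepancies (each lying in $\CC[V]_{\leq 2t-1}$) into $h_0$ and $q_0$. The only cosmetic difference is that you phrase the passage from $\hsta$ to $*$ via an explicit error term $\epsilon(p,q)$, whereas the paper directly observes that the $\hsta$-combination vanishes in $\CC[V]_{=2t}$ and concludes the $*$-combination has degree $\leq 2t-1$.
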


\begin{remark}\rm
Since $\CC[V]$ is identified with the space $\CC[z]_I$ of normal
forms, the products involving $*$ in
Lemma~\ref{lem:2.4} represent multiplication of
polynomials followed by reduction to normal form.
\end{remark}

\begin{proof}
Given $v_i\in (\CC[z_0,\dots,z_n]/(I^h+\langle
z_0\rangle))_t$, let $\bv_i$ be the element of $\CC[V]_{=t}$
given by the isomorphism in Lemma \ref{lem:3.5}.  For each
$k=1,\dots,m-1$, let $h_k\in\CC[V]_{=2t-1}$ be the element
corresponding to $H_k\in(\CC[z_0,\dots,z_n]/(I^h+\langle
z_0\rangle))_{2t-1}$ in \eqref{eqn:1.2}.  Then by
\eqref{eqn:1.2}, the polynomial
\[
\bv_i{\hsta} \bv_i - z_m^t{\hsta}\bv_i - \sum_{k=1}^{m-1} z_k{\hsta} h_k
\in\CC[V]_{=2t} 
\] 
corresponds to the zero polynomial in
$\left(\CC[z_0,\dots,z_n]/(I^h+\langle z_0\rangle)\right)_{2t}$, so it
must be zero in $\CC[V]_{=2t}$.  (Here, $\hsta$ is as in Remark \ref{rem:3.6}.)  Thus the polynomial $h_0:=\bv_i{*}\bv_i - z_m^t{*}\bv_i -
\sum_{k=1}^{m-1} z_k{*}h_k$ is in $\CC[V]_{\leq 2t-1}$.  This proves (1).

A similar argument applied to \eqref{eqn:1.3} proves (2).
\end{proof}

In what follows, we use the notation
\[
z^{\alpha} = z_1^{a_1}\cdots z_{m-1}^{a_{m-1}}, \quad z^{\beta} =
z_{m+1}^{b_1}\cdots z_n^{b_n}.
\]
Define the finite set of monomials
\begin{equation}
\label{eqn:calB}
\calB:=\{z_m^lz^\beta\notin\langle\LT(I)\rangle,\ l+|\beta|\leq
t-1\}\subseteq\CC[V].
\end{equation}

\begin{proposition}
\label{prop:span}
$\CC[V]$ is spanned over $\CC$ by the homogeneous polynomials 
\begin{eqnarray*}
&(*) & z^{\alpha}z_m^l{*}z^{\beta}: \quad \alpha\in\ZZ_{\geq 0}^{m-1},
  \ z_m^lz^{\beta}\in\calB,  \ \hbox{and} \\ 
&(**)& z^{\alpha}z_m^l{*}\bv_i: \quad \alpha\in\ZZ_{\geq 0}^{m-1},
  \ l\geq 0, \ i=1,\dots,d.  
\end{eqnarray*}
\end{proposition}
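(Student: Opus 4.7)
Let $W\subseteq\CC[V]$ denote the $\CC$-linear span of the polynomials of types $(*)$ and $(**)$. We must show $W=\CC[V]$. Since the normal form monomials form a basis of $\CC[V]$, it suffices to show each such monomial belongs to $W$. Write any normal form monomial as $z^\gamma=z^\alpha z_m^l z^\beta$ with $z^\alpha\in\CC[z_1,\dots,z_{m-1}]$ and $z^\beta\in\CC[z_{m+1},\dots,z_n]$, and set $\tau:=l+|\beta|$. The argument proceeds by strong induction on $s:=|\gamma|$, with an internal induction on $\tau$.

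For the outer step, assume $\CC[V]_{\leq s-1}\subseteq W$. If $\tau\leq t-1$, then $z_m^l z^\beta$ divides $z^\gamma$, so it is itself a normal form and hence lies in $\calB$; thus $z^\gamma=z^\alpha z_m^l\,{*}\,z^\beta$ is a polynomial of type $(*)$ and we are done. Otherwise $\tau\geq t$: Lemma \ref{lem:Stau} gives constants $c_i$ with $z_m^l z^\beta=\sum_i c_i\,z_m^{\tau-t}v_i$ in $\calS_\tau$, and lifting this identity from $\calS$ back to $\calS':=\CC[z_0,\dots,z_n]/(I^h+\langle z_0\rangle)$ costs a correction $\sum_{k=1}^{m-1}z_k G_k$ with $G_k\in\calS'_{\tau-1}$. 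Applying the graded-ring isomorphism of Lemma \ref{lem:3.5} and Remark \ref{rem:3.6} to pass to $\CC[V]_{=\tau}$, then multiplying on the left by $z^\alpha\in\CC[z_1,\dots,z_{m-1}]\subseteq\CC[V]$ and using associativity of $\hsta$, the identity becomes
\[
z^\gamma \;=\; \sum_i c_i\,(z^\alpha z_m^{\tau-t})\hsta\bv_i\;+\;\sum_{k=1}^{m-1} z^{\alpha+e_k}\hsta g_k,
\]
with $g_k\in\CC[V]_{=\tau-1}$. Each $(z^\alpha z_m^{\tau-t})\,{*}\,\bv_i$ is by definition a $(**)$-polynomial, and differs from $(z^\alpha z_m^{\tau-t})\hsta\bv_i$ by an element of $\CC[V]_{\leq s-1}$, which lies in $W$ by the outer induction hypothesis. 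Hence the first sum is already in $W$.

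To dispatch the second sum, I run an inner induction on $\tau$ proving the following claim: for every $\mu\in\ZZ_{\geq 0}^{m-1}$ and every $a\in\CC[V]_{=\tau'}$ with $|\mu|+\tau'=s$, one has $z^\mu\hsta a\in W$. The base case $\tau'=0$ reduces to $z^\mu=z^\mu\cdot 1\in(*)$ using $1\in\calB$. In the inductive step we expand $a$ as a sum of normal form monomials $z^{\alpha_j} z_m^{l_j}z^{\beta_j}$ of degree $\tau'$ and repeat the same dichotomy on each: when $l_j+|\beta_j|\leq t-1$ we obtain a $(*)$-polynomial plus an outer-induction remainder; when $l_j+|\beta_j|\geq t$ Lemma \ref{lem:Stau} yields $(**)$-polynomials plus further terms of the form $z^{\mu'}\hsta g'_k$ with $\deg g'_k\leq \tau_j-1<\tau'$, which are in $W$ by the inner inductive hypothesis. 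The principal source of difficulty is purely bookkeeping: one must carefully separate the honest normal form product ${*}$ (featured in the definitions of $(*)$ and $(**)$) from the top-degree truncation $\hsta$ supplied by Lemma \ref{lem:3.5}, and verify at each step that the difference is a lower-degree element already absorbed by the outer induction.
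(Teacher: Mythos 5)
Your proof is correct, and it shares the paper's core strategy: strong induction on $s$, the dichotomy on $\tau=l+|\beta|$ versus $t$, Lemma~\ref{lem:Stau} for the key identity, and the observation that the span $W$ of $(*)$ and $(**)$ is stable under multiplication by $z_1,\dots,z_{m-1}$. The difference is in how the correction terms are reduced. The paper dehomogenizes the Lemma~\ref{lem:Stau} identity (setting $z_0=1$) \emph{before} multiplying by $z^\alpha$, so the corrections $z^\alpha h_j$ already have degree $\le s-1$ and the outer induction applies to them directly; the prefactor $z_j$ is then absorbed simply because $z_j\cdot(*)\subseteq(*)$ and $z_j\cdot(**)\subseteq(**)$. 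You instead stay in the associated graded ring via $\hsta$, where the corrections $z^{\alpha+e_k}\hsta g_k$ retain top degree $s$, and you compensate with a second, inner induction on $\tau'$ plus careful tracking of the $\hsta$-versus-$*$ gap at every step. This is more elaborate than necessary even within your framework: by associativity $z^{\alpha+e_k}\hsta g_k=z_k\hsta(z^\alpha\hsta g_k)$, the factor $z^\alpha\hsta g_k$ lies in $\CC[V]_{=s-1}\subseteq W$ by the outer hypothesis alone, and then stability of $W$ under $z_k*$ (together with the outer hypothesis for the $\hsta$-versus-$*$ discrepancy) finishes the step without any inner induction. One minor slip: the base case of your inner induction invokes $1\in\calB$, which holds only when $t\ge 1$; if $t=0$ (linear varieties, $d=1$) then $\calB=\emptyset$ and one should instead use $1=\bv_1\in(**)$.
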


\begin{remark} \label{rmk:nfbasis}\rm
Note that $z^{\alpha}z_m^l$ is a normal form by
Proposition~\ref{prop:2.2a}, while the products
$z^{\alpha}z_m^lz^{\beta}$ and $z^{\alpha}z_m^l\bv_i$ may fail
to be normal forms.  This explains why the proposition uses
$z^{\alpha}z_m^l{*}z^{\beta}$ and $z^{\alpha}z_m^l{*}\bv_i$. 
\end{remark}

\begin{proof}
To simplify the proof, we will omit the $*$ when multiplying normal
forms.  It suffices to show that any monomial
$z^{\alpha}z_m^lz^{\beta}\notin\langle\LT(I)\rangle$ can be expressed
as a linear combination of elements of $(*)$ and $(**)$. 

We will prove this by induction on $s = |\alpha|+l+|\beta|$.  Suppose
$z^{\alpha}z_m^lz^{\beta} \notin \langle\LT(I)\rangle$ with $s\leq
t-1$.  Then $|\alpha|+l+|\beta|\leq t-1$, so that
$z_m^lz^{\beta}\in\calB$.  Hence the monomial is in ($*$), which
proves the base case.

Next, assume $s\geq t$ and that $\CC[V]_{\leq s-1}$ is spanned by the
polynomials ($*$) and ($**$) of degree $\leq s-1$.  Take
$z^{\alpha}z_m^lz^{\beta} \notin \langle\LT(I)\rangle$ of degree $s$.  No factor of this monomial is in the ideal either; in particular, 
$z_m^lz^{\beta} \notin \langle\LT(I)\rangle$.  If $l+|\beta|\leq t-1$,
then $z_m^lz^{\beta}\in\calB$ and therefore $z^{\alpha}z_m^lz^{\beta}$
is an element of the form $(*)$.

Otherwise, $\tau:=l+|\beta|\geq t$.  By Lemma~\ref{lem:Stau}, we have
an equation
\[
z_m^lz^{\beta} = \sum_{i=1}^d a_i z_m^{\tau-t} v_{i} +
\sum_{j=0}^{m-1}z_jH_j(z_0,z) + H(z_0,z),
\]
in $\CC[z_0,z]$, where $a_i \in \CC$, $\deg H_j = \tau-1$ and $H \in
I^h$.  If we dehomogenize by setting $z_0 = 1$, we obtain
\[
z_m^lz^{\beta} = \sum_{i=1}^d a_i z_m^{\tau-t} v_{i} +
\sum_{j=1}^{m-1}z_j h_j(z) + h_0(z)
\]
in $\CC[z]/I$, where $a_i \in \CC$ and $\deg h_j \le \tau-1$.  We can
multiply by $z^\alpha$ to obtain
\[
z^\alpha z_m^lz^{\beta} = \sum_{i=1}^d a_i z^\alpha z_m^{\tau-t} v_{i} +
\sum_{j=1}^{m-1}z_j (z^\alpha h_j(z)) + z^\alpha h_0(z)
\]
in $\CC[z]/I$.  Using the isomorphism $\CC[V] = \CC[z]_I \simeq
\CC[z]/I$, this becomes
\[
z^\alpha z_m^lz^{\beta} = \sum_{i=1}^d a_i z^\alpha z_m^{\tau-t}
\bv_{i} + \sum_{j=1}^{m-1}z_j(z^\alpha h_j(z)) + z^\alpha
   h_0(z).
\]
in $\CC[V]$.  The first sum is a linear combination of elements of
the form $(**)$.  For the second sum, note that $\deg(z^{\alpha}h_j)\leq s-1$
for each $j=1,\dots,m-1$.  By the inductive hypothesis, this means
that $z^{\alpha}h_j$ is a linear combination of terms in $(*)$ and
$(**)$, and therefore $z_jz^{\alpha}h_j$ is too, by definition.
Finally, $\deg(z^{\alpha}h_0)\leq s-1$, and again by induction,
$z^{\alpha}h_0$ is a linear combination of terms in $(*)$ and
$(**)$.
\end{proof}

The following is an immediate corollary of the above proof.

\begin{corollary}
\label{cor:2.6}
$\CC[V]_{\leq s}$ is spanned over $\CC$ by the polynomials in $(*)$
and $(**)$ of degree $\leq s$. \qed
\end{corollary}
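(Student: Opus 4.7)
The plan is to refine the inductive argument of Proposition~\ref{prop:span} by tracking degrees at each step. Since $\CC[V]_{\leq s}$ is spanned as a $\CC$-vector space by the normal-form monomials $z^\alpha z_m^l z^\beta \notin \langle\LT(I)\rangle$ of degree at most $s$, it suffices to prove the stronger claim that any such monomial of total degree $s' := |\alpha| + l + |\beta|$ is a $\CC$-linear combination of elements of $(*)$ and $(**)$ of degree $\leq s'$. I would prove this by induction on $s'$, mirroring the proof of Proposition~\ref{prop:span}.

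The base case $s' \leq t-1$ is immediate: $z^\alpha z_m^l z^\beta$ is itself of type $(*)$ and has degree $s'$. For the inductive step with $s' \geq t$ and $\tau := l + |\beta| \geq t$, I would invoke the identity
\[
z^{\alpha}z_m^lz^{\beta} = \sum_{i=1}^d a_i z^{\alpha}z_m^{\tau-t}\bv_{i} + \sum_{j=1}^{m-1} z_j(z^{\alpha}h_j) + z^{\alpha}h_0
\]
from the proof of the proposition, where $\deg h_j \leq \tau - 1$. Each term in the first sum is of type $(**)$ with degree $|\alpha| + (\tau-t) + t = s'$. For the remaining terms, $\deg(z^\alpha h_j) \leq s'-1$, so by induction each $z^\alpha h_j$ is a linear combination of $(*)$- and $(**)$-elements of degree $\leq s'-1$; multiplying by $z_j$ (for $1 \leq j \leq m-1$) or by nothing then yields expressions of degree $\leq s'$.

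The only point needing explicit justification is the closure of classes $(*)$ and $(**)$ under multiplication by $z_1,\dots,z_{m-1}$. By Proposition~\ref{prop:2.2a}, if $z^{\alpha'}z_m^{l'}$ is a normal form, then so is $z_j z^{\alpha'} z_m^{l'}$ for $1 \leq j \leq m-1$, and by associativity of the normal-form product,
\[
z_j\cdot\bigl(z^{\alpha'}z_m^{l'}{*}z^{\beta'}\bigr)=\bigl(z_jz^{\alpha'}\bigr)z_m^{l'}{*}z^{\beta'}, \qquad z_j\cdot\bigl(z^{\alpha'}z_m^{l'}{*}\bv_i\bigr)=\bigl(z_jz^{\alpha'}\bigr)z_m^{l'}{*}\bv_i
\]
remain in classes $(*)$ and $(**)$ respectively (with exponent $\alpha'+e_j\in\ZZ_{\geq 0}^{m-1}$). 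There is no serious obstacle here; the corollary really is a bookkeeping refinement of the argument already given, and the only thing to watch is that the $z^\alpha z_m^{\tau-t}\bv_i$-terms on the right-hand side sit at the maximal allowable degree $s'$ rather than above it, which is guaranteed by the homogeneity built into Lemma~\ref{lem:Stau}.
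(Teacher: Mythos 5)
Your proof is correct and is essentially the paper's own argument: the inductive hypothesis in the proof of Proposition~\ref{prop:span} is already stated with the degree bound (``$\CC[V]_{\leq s-1}$ is spanned by the polynomials ($*$) and ($**$) of degree $\leq s-1$''), and the inductive step only produces spanning elements of degree $\leq s$, so the corollary is exactly the bookkeeping you spell out, including the useful observation that multiplication by $z_j$ ($j<m$) preserves the classes $(*)$ and $(**)$. One small omission: your inductive step assumes $\tau = l + |\beta| \geq t$, whereas the case $s' \geq t$ with $\tau \leq t-1$ also occurs and must be handled (trivially, as in your base case: the monomial is already of type $(*)$ with degree $s'$).
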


Now that we have a spanning set, the next step in constructing the
desired basis for $\CC[V]$ is to show that the elements of the form $(**)$ are
linearly independent over $\CC$.  These elements are monomials in
$z_1,\dots,z_m$ multiplied by one of $\bv_1,\dots,\bv_d$.  Since the
inclusion $\CC[z_1,\dots,z_m] \subseteq\CC[V]$ makes $\CC[V]$ into a
module over $R=\CC[z_1,\dots,z_m]$, we can verify linear independence
by showing the following.

\begin{theorem}
\label{thm:freemodule}
The polynomials $\bv_1,\dots,\bv_d$ generate a free $R$-submodule of
$\CC[V]$. 
\end{theorem}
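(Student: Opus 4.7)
The plan is to argue by contradiction via a Hilbert-function dimension count, building on the spanning description in Proposition~\ref{prop:span} and Corollary~\ref{cor:2.6}. Suppose there is a nonzero $R$-linear relation $\sum_{i=1}^d r_i \bv_i = 0$ in $\CC[V]$ with $(r_1,\dots,r_d)\in R^d$ not all zero, and set $e_0 = \max_i \deg r_i$.

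First I would compare the size of the spanning set $\calF_s$ of $(*)$ and $(**)$ elements of degree $\leq s$ with $\dim \CC[V]_{\leq s}$. The $(**)$ elements contribute $d\binom{s-t+m}{m}$ terms, indexed by triples $(\alpha,l,i) \in \ZZ_{\geq 0}^{m-1}\times\ZZ_{\geq 0}\times\{1,\dots,d\}$ with $|\alpha|+l\leq s-t$, while the $(*)$ elements contribute only $O(s^{m-1})$ (as $\calB$ is finite and only $\alpha\in\ZZ_{\geq 0}^{m-1}$ varies freely, over $O(s^{m-1})$ values for each fixed $(l,\beta)$). Thus $|\calF_s| = \frac{d}{m!}s^m + O(s^{m-1})$. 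On the other hand, the isomorphism $\CC[V]_{\leq s}\simeq(\CC[z_0,z]/I^h)_s$ established in the proof of Lemma~\ref{lem:3.5}, combined with the Hilbert polynomial of $\bar V$ (pure of dimension $m$, degree $d$), gives $\dim \CC[V]_{\leq s} = \frac{d}{m!}s^m + O(s^{m-1})$ for $s\gg 0$. Hence the kernel of the surjection $\CC^{|\calF_s|}\twoheadrightarrow\CC[V]_{\leq s}$ has $\CC$-dimension $O(s^{m-1})$.

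Next, I would produce many kernel elements from the hypothesized relation. For each $(\beta,l_0) \in \ZZ_{\geq 0}^{m-1}\times\ZZ_{\geq 0}$ with $|\beta|+l_0\leq s-t-e_0$, multiplying the relation by $z^\beta z_m^{l_0}$ gives $\sum_i (z^\beta z_m^{l_0} r_i)\bv_i = 0$; expanding each $z^\beta z_m^{l_0} r_i \in R$ in the monomial basis $\{z^\alpha z_m^l\}$ rewrites this as a $\CC$-linear combination of $(**)$ spanning elements of degree $\leq s$ that vanishes in $\CC[V]$, i.e., a kernel vector. The main technical point, which I expect to be the real obstacle, is to show these $\binom{s-t-e_0+m}{m}$ shifted vectors are $\CC$-linearly independent in $\CC^{|\calF_s|}$. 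A putative dependence $\sum_{(\beta,l_0)}\lambda_{\beta,l_0}(\text{shifted vector}) = 0$, read off coefficient-by-coefficient, asserts that $\Lambda r_i$ has no monomial of degree $\leq s-t$ for any $i$, where $\Lambda := \sum \lambda_{\beta,l_0} z^\beta z_m^{l_0}$. Since $\deg\Lambda \leq s-t-e_0$ and $\deg r_i \leq e_0$ force $\deg(\Lambda r_i)\leq s-t$, we conclude $\Lambda r_i = 0$ in $R$ for every $i$; as some $r_i \neq 0$ and $R$ is an integral domain, this forces $\Lambda = 0$ and hence every $\lambda_{\beta,l_0} = 0$. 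Finally, $\binom{s-t-e_0+m}{m} = \frac{1}{m!}s^m + O(s^{m-1})$ eventually exceeds the kernel dimension $O(s^{m-1})$, yielding the desired contradiction and completing the proof of $R$-linear independence.
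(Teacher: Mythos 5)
Your proposal is correct and follows essentially the same strategy as the paper's proof. Both arguments run a Hilbert-function dimension count and derive the same contradiction $\frac{1}{m!}s^m \le O(s^{m-1})$: the paper packages this via the exact sequence $0 \to K_s \to R^d_{\le s-t} \to M_{\le s}\to 0$, noting that a relation $(f_1,\dots,f_d)$ forces $R_{\le s-t-D}\cdot(f_1,\dots,f_d)$ (a copy of $R_{\le s-t-D}$, by integrality of $R$) into $K_s$, while you package it as the kernel of the surjection $\CC^{|\calF_s|}\twoheadrightarrow\CC[V]_{\le s}$ and exhibit the shifts $z^\beta z_m^{l_0}\cdot\sum_i r_i\bv_i$ as too many linearly independent kernel vectors; the linear-independence step via $\Lambda r_i=0$ in the integral domain $R$ is precisely the injectivity the paper uses. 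This is a cosmetic repackaging, not a genuinely different route.
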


\begin{proof} 
We first observe that since $V$ has dimension $m$ and degree $d$, we
have
\begin{equation}
\label{eqn:hilbert} 
\dim\CC[V]_{\leq s}=\frac{d}{m!}s^m + O(s^{m-1})
\end{equation}
(see e.g.\ \cite{coxlittleoshea:ideals}, \S9.3).
Now let $M:=\sum_{i=1}^d R\bv_i$ and $N: = \sum_{\calB}
\CC[z_1,\dots,z_{m-1}] z_m^lz^{\beta}$, and for $s\geq t$ define
\[
M_{\leq s} := \sum_{i=1}^d R_{\leq s-t}\bv_i,\quad N_{\leq s} :=
\sum_{\calB}\CC[z_1,\dots,z_{m-1}]_{\leq
  s-l-|\beta|}z_m^lz^{\beta}. 
\]
The corollary implies $\CC[V]_{\leq s} = M_{\leq s} + N_{\leq s}$.
Using \eqref{eqn:HFCz}, one easily obtains
\[
\dim N_{\leq s} \leq \frac{|\calB|}{(m-1)!}s^{m-1} + O(s^{m-2}). 
\]
Combining this with \eqref{eqn:hilbert} and $\CC[V]_{\leq s} = M_{\leq
  s} + N_{\leq s}$ yields
\[
\dim M_{\leq s} = \frac{d}{m!}s^m+ O(s^{m-1}).
\]

Suppose there is a nontrivial relation 
\begin{equation} \label{eqn:fv}
f_1\bv_1+\cdots+f_d\bv_d=0, \quad  f_i\in R,\ \hbox{not all } f_i=0. 
\end{equation} 
Let $D=\max\{\deg f_1,\dots,\deg f_d\}$ and take a large integer
$s\geq t + D$.  There is an exact sequence 
\[
0 \lto K_s\lto R_{\leq s-t}^d \stackrel{\varphi_s}{\lto} M_{\leq s}\lto 0
\]
where $\varphi_s:R_{\leq s-t}^d\to M_{\leq s-t}$ is given by
$\varphi(g_1,\dots,g_d) = \sum_i g_i\bv_i$ and $K_s:=\ker\varphi_s$.  We
have $R_{s-t-D}\cdot(f_1,\dots,f_d) \subseteq R_{\leq s-t}^d$,  
so by \eqref{eqn:fv}, 
\[
R_{s-t-D}(f_1,\dots,f_d)\subseteq K_s.
\]
Since $(f_1,\dots,f_d)\neq(0,\dots,0)$ we have $K_s\neq 0$ and so $\dim
K_s\geq\dim R_{s-t-D}$.  Thus
\[
\dim R_{\leq s-t}^d \ = \ \dim M_{\leq s} + \dim K_s \ \geq \  \dim
M_{\leq s} + \dim R_{s-t-D}. 
\]
A Hilbert function calculation then gives the inequality
\[
\frac{d}{m!}s^m+ O(s^{m-1}) \geq \Bigl(\frac{d}{m!}s^m +
O(s^{m-1})\Bigr) + \Bigl(\frac{1}{m!}(s-t-D)^m + O(s^{m-1}) \Bigr), 
\]
so that $\frac{1}{m!}s^m\leq O(s^{m-1})$, a contradiction.  This says
that no equation of the form \eqref{eqn:fv} can hold, and so
$\bv_1,\dots,\bv_d$ are free over $R$.
\end{proof}

We now construct the sought-after ordered basis for $\CC[V]$.  

\begin{definition}\rm
\label{def:basisCprec}  
The polynomials given by $(*)$ and $(**)$ span $\CC[V]$ by
Proposition~\ref{prop:span}, and those from $(**)$ are linearly
independent by Theorem~\ref{thm:freemodule}.  We first create a basis of $\CC[V]$ by adjoining a sufficient number of elements of the form $(*)$ to those of the form $(**)$.  List those of the form $(*)$ in grevlex order and discard any monomial that is linearly dependent with respect to elements of the form $(**)$ together with previous elements of $(*)$; otherwise keep it.  This yields the basis $\calC$ of $\CC[V]$.  We define an ordering $\prec$ on $\calC$ as
follows.  First, order the elements by total degree; then for a fixed degree $s$,
\begin{itemize}
\item let elements of $(*)$ precede elements of $(**)$;
\item let $z^{\alpha}z_m^l{*}\bv_i\prec z^{\hat\alpha}z_m^{\hat
  l}{*}\bv_j$ if $z^{\alpha}z_m^l$ precedes $z^{\hat\alpha}z_m^{\hat l}$
  according to grevlex; 
\item let $z^{\alpha}z_m^l{*}\bv_i\prec z^{\alpha}z_m^l{*}\bv_j$ if $i<j$;
  and
\item let elements of the form $(*)$ be ordered according to grevlex.
\end{itemize}
\end{definition}  

It is easy to see that the elements of $\calC$ of degree $\le s$ form
a basis of $\CC[V]_{\le s}$.  The Chebyshev constants defined in
Section~\ref{sec:cheby} will use the ordered basis of $\CC[V]$ given
in Definition~\ref{def:basisCprec}.

\medskip

We conclude this section by computing some examples of  $\calC$ and
$\prec$. 

\begin{example}\rm
Let $V=\{z=(z_1,\dots,z_n)\in\CC^n : z_{m+1}=z_{m+2}=\cdots
=z_n=0\}$.  The Noether normalization is the identity,
$\CC[z_1,\dots,z_m] = \CC[V]$, and in the notation of
\eqref{eqn:properties}, $\bar V\cap
P=\{[0:\cdots:0:1:0:\cdots:0]\}$, where the $1$ is in the $m$-th slot.
We take $v_1=\bv_1=1$ (so $t=0$).  The basis $\calC$ consists of the
monomials in $\CC[z_1,\dots,z_m]$, which are elements of the form
$(**)$, ordered by grevlex.  There are no elements of the form $(*)$
in this case. 
\end{example}

\begin{example}
\label{ex:sphere}\rm
Let $V$ be the complexified sphere in $\CC^3$, i.e., the algebraic
surface given by the equation $z_1^2+z_2^2+z_3^2=1$.  A basis of
$\CC[V]$ is given by all monomials not in $\langle z_3^2\rangle$,
i.e., 
\[
1,z_1,z_2,z_3,z_1^2,z_1z_2,z_1z_3,z_2^2,z_2z_3,z_1^3,\dotsc.
\]
The Noether normalization is $\CC[z_1,z_2]\subseteq\CC[V]$.

In $\PP^3$, $\bar V$ is given by all points $[z_0:z_1:z_2:z_3]$
satisfying $z_1^2+z_2^2+z_3^2=z_0^2$, and $P=\{z_0=z_1=0\}$.  The
points of $\bar V\cap P$  are then $p_1=[0:0:1:-i]$ and
$p_2=[0:0:1:i]$.   Thus \eqref{eqn:properties} is satisfied.

Interpolating polynomials are $v_1=\frac{1}{2}(z_2+iz_3)$ and
$v_2=\frac{1}{2}(z_2-iz_3)$.  In this case $t=1$ so that $\bv_1=v_1$
and $\bv_2=v_2$.  The first few elements of the basis $\calC$, ordered
by $\prec$, are 
\[
1,z_1,\bv_1,\bv_2,z_1^2,z_1\bv_1,z_1\bv_2,z_2\bv_1,z_2\bv_2,z_1^3,\dotsc.
\]
Basis elements of the form $(*)$ are $z_1^k$ while those of the form
$(**)$ are $z_1^{\alpha_1}z_2^{\alpha_2}\bv_i$.  (Note that since
$l<t=1$, no factors of the form $z_2^l$ appear in $(*)$). 
\end{example}

\begin{example}
\label{ex:hypersurface}\rm
When $V=\bV(f)\subseteq\CC^n$ is a hypersurface given by
$f\in\CC[z_1,\dots,z_n]$, we can generalize Example~\ref{ex:sphere} by
computing the basis $\calC$ rather explicitly.  We assume that $f$ is
a product of distinct irreducible polynomials, so that $I = \bI(V) =
\langle f\rangle$.  We also assume that $\LT(f) = z_n^d$ where
$d=\deg(f)$.  This ensures that $\CC[z_1,\dots,z_{n-1}] \subseteq
\CC[V]$ is a Noether normalization.

Let $F:=f^h\in\CC[z_0,\dots,z_n]$ be the homogenization of $f$; then
in $\PP^n$, $\bar V=\bV(F)$ and $I^h = \langle F\rangle$.  If the
properties \eqref{eqn:properties} hold, then
$\bV(F,z_0,\dots,z_{n-2})\subseteq\PP^n$ consists of $d$ distinct
points, all with $z_{n-1}\neq 0$, given by $[0:\cdots:1:\beta_i]$ for
$i = 1,\dots,d$.

Separating the terms of $F$ containing only the variables
$z_{n-1},z_n$ from the others, we have 
\begin{equation}
\label{eqn:FG}
F(z) \  = \  G(z_{n-1},z_n)  \ +
\  \sum_{l=0}^{n-2}z_lH_l(z_0,\dots,z_n),
\end{equation}
where $\deg(G)=d$ and $\deg H_l=d-1$ for each $l=0,\dots,n-2$.  Thus
$G(1,\beta_i) = 0$ for $i = 1,\dots,d$.

In the notation of earlier in the section, we have
\[
\begin{aligned}
\calS &= \CC[z_0,\dots,z_n]/(I^h+\langle z_0,\dots,z_{n-2}\rangle) =
\CC[z_0,\dots,z_n]/\langle F(z),z_0,\dots,z_{n-2}\rangle\\
&= \CC[z_0,\dots,z_n]/\langle G(z_{n-1},z_n),z_0,\dots,z_{n-2}\rangle\\
&\simeq \CC[z_{n-1},z_n]/\langle G(z_{n-1},z_n)\rangle,
\end{aligned}
\]
where the second line uses \eqref{eqn:FG} and the third uses 
 the map
\[
p(z_0,z_1,\dots,z_n) \mapsto p(0,\dots,0,z_{n-1},z_n). 
\]
We factor $G(z_{n-1},z_n) = \prod_{i=1}^d(z_n-\beta_iz_{n-1}) =
\prod_{i=1}^d l_i(z_{n-1},z_n)$.  Note that $\beta_i\neq\beta_j$ if
$i\neq j$.  For each $i=1,\dots,d$, define
\begin{equation}
\label{eqn:2} 
v_i(z_{n-1},z_n) = \prod_{j\neq i}
\frac{l_j(z_{n-1},z_n)}{l_j(1,\beta_i)}.
\end{equation}  
Then $\deg(v_i)=d-1$ for each $i$, and clearly
\begin{equation}
\label{eqn:3} 
v_i(1,\beta_j)=\begin{cases} 0 & \hbox{if }
j\neq i\\ 1&\hbox{if } j=i.\end{cases} 
\end{equation}

Note that when $f = z_1^2+z_2^2+z_3^2-1$ as in
Example~\ref{ex:sphere}, we have the points $[0:0:1:-i]$ and
$[0:0:1:i]$.  Then $G = z_2^2+z_3^2 = (z_3+iz_2)(z_3-iz_2) =
l_1l_2$ and the formula for $v_1$ reduces to
\[
v_1 = \frac{l_2(z_2,z_3)}{l_2(1,-i)} = \frac{z_3-iz_2}{-2i} =
{\textstyle\frac12}(z_2+iz_3),
\]
in agreement with Example~\ref{ex:sphere}.  The formula for $v_2$
works similarly.

By \eqref{eqn:3}, $v_1,\dots,v_d$ satisfy Lemma~\ref{lem:Stau} with $t
= d-1$.  Since the $v_i$ only involve $z_{n-1},z_n$ and are normal
forms with respect to grevlex (having degree $\le d-1$ in $z_n$), we can take $\bv_i = v_i$ in
Lemma \ref{lem:2.4}.  Thus $\bv_1,\dots,\bv_d$ are defined by
\eqref{eqn:2} and have degree $d-1$.

The next step is to identify the set $\calB$ from \eqref{eqn:calB}.  
Since $m = n-1$, the monomials $z^\alpha$ and $z^\beta$ from
Proposition \ref{prop:span} are 
\[
z^\alpha = z_1^{a_1} \cdots z_{n-2}^{a_{n-2}},\quad z^\beta = z_n^b.
\]
In this notation, a monomial in $z_1,\dots,z_n$ is written $z^\alpha
z_{n-1}^l z_n^b$.  Since the $\bv_i$ have degree $t = d-1$ and $\langle \lt(I) \rangle =
\langle \lt(f) \rangle = \langle z_n^d \rangle$, it follows that
\eqref{eqn:calB} becomes
\[
\calB = \{z_{n-1}^l z_n^b \notin \langle z_n^d \rangle : l + b \le
d-2\} = \{z_{n-1}^l z_n^b : l + b \le d-2\}.
\]
Hence the collections $(*)$ and $(**)$ from Proposition
\ref{prop:span} are 
\begin{equation}
\label{eqn:hypersurface}
\begin{aligned}
(*) &\  z^{\alpha}z_{n-1}^lz_n^{b}: \quad \alpha\in\ZZ_{\geq 0}^{n-2},
  \ l + b \le d-2,  \ \hbox{and} \\ 
(**) &\  z^{\alpha}z_{n-1}^l\bv_i: \quad \alpha\in\ZZ_{\geq 0}^{n-2},
  \ l\geq 0, \ i=1,\dots,d.  
\end{aligned}
\end{equation}
These products are all normal forms, so no $*$ is needed in the multiplications.

The nicest feature of the hypersurface case is that the
basis $\calC$ consists \emph{precisely} of the polynomials in
\eqref{eqn:hypersurface}.  They span by Proposition \ref{prop:span},
so we only need to prove linear independence.  The polynomials in
$(**)$ are linearly independent by Theorem~\ref{thm:freemodule}, and
those in $(*)$ are linearly independent since they are normal-form
monomials.  Hence it remains to study an equation of the form
\[
\text{linear combination of } z^{\alpha}z_{n-1}^lz_n^{b} =
\text{linear combination of } z^{\alpha}z_{n-1}^l\bv_i.
\]
The left-hand side has degree $\le d-2$ in $z_{n-1},z_n$ and the
right-hand side has degree $\ge d-1$.  This forces the linear
combinations to be trivial, and linear independence follows.

To summarize: when $V=\bV(f)$ is a hypersurface of degree $d$, the
$\bv_i$'s are polynomials of degree $d-1$ that we can compute
explicitly in terms of $f$, and the elements of $(*)$ consist of all
monomials $z_1^{\alpha_1}\cdots z_{n-1}^{\alpha_{n-1}}z_n^{\alpha_n}$
with $\alpha_{n-1}+\alpha_{n}\leq d-2$.
\end{example}
 
\section{Weakly submultiplicative functions} 
\label{sec:4}

In \cite{bloomlev:weighted}, Bloom and Levenberg observed that the
main properties of Zaharjuta's directional Chebyshev constants
followed from the submultiplicative property of sup norms of Chebyshev
polynomials, and could be recast rather abstractly as properties of
submultiplicative functions on integer tuples.  We verify here that
these properties still hold under slightly weaker conditions.  The
arguments are those of Zaharjuta's paper \cite{zaharjuta:transfinite}
with minor adjustments.  We will apply these results concretely in the
next section.

\begin{definition} 
\label{def:Y} \rm
Let $m$ be a positive integer.  A non-negative function $Y:\ZZ_{\geq
  0}^m\to\RR_{\geq 0}$ is said to be \emph{weakly submultiplicative}
if there is a finite subset $\calF$ of $\ZZ_{\geq 0}^m$ such that:
\[
\text{\sl For all $\alpha,\beta\in\ZZ_{\geq 0}^m$ there exists
  $\gamma\in\calF$ such that $Y(\alpha+\beta+\gamma)\leq
  Y(\alpha)Y(\beta)$.} 
\]
$Y$ has \emph{subexponential growth} if for some $C,r>0$ we have
$Y(\alpha)\leq Cr^{|\alpha|}$ for all $\alpha$. 
\end{definition}

\begin{remark}[cf.\ \cite{bloomlev:weighted}]\rm
When $Y(\alpha+\beta)\leq Y(\alpha)Y(\beta)$, i.e.,
$\calF=\{(0,\dots,0)\}$, $Y$ is called \emph{submultiplicative}.  A
submultiplicative function automatically has subexponential growth: if
$\alpha=(\alpha_1,\dots,\alpha_m)$ then
\[
Y(\alpha) = Y\Big(\sum_{k=1}^m\alpha_ke_k\Big) \leq \prod_{k=1}^m
Y(e_k)^{\alpha_k} \leq r^{|\alpha|}, 
\]
where $e_k$ is the $k$-th coordinate vector and $r=\max_k Y(e_k)$.
It seems that weak submultiplicativity should also imply
subexponential growth, but the above argument runs into some technical
difficulties.
\end{remark}

Let 
\[
\Sigma_m:=\bigl\{\theta=(\theta_1,\dots,\theta_m)\in\RR^m :
\theta_i\geq 0 \ \forall\,i, \ {\textstyle\sum_i} \theta_i=1\bigr\}
\] 
denote the simplex in $\RR^m$, and let $\Sigma_m^{\circ} :=
\bigl\{\theta\in\Sigma_m : \theta_i>0\ \forall\, i\}$ be its interior.

\begin{lemma} 
\label{lem:3.5a}
Let $Y:\ZZ_{\geq 0}^m\to\RR_{\geq 0}$ be weakly submultiplicative with
subexponential growth.  For all $\theta\in\Sigma_m^{\circ}$, the limit
$\displaystyle T(\theta) :=
\lim_{\substack{|\alpha|\to\infty\\ \frac{\alpha}{|\alpha|}\to\theta}}
Y(\alpha)^{\frac{1}{|\alpha|}}$ exists.
\end{lemma}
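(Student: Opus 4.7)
The plan is a Fekete-type argument in the style of Zaharjuta, adapted to the weakly submultiplicative setting. Set
\[
L^+ := \limsup_{|\alpha|\to\infty,\,\alpha/|\alpha|\to\theta} Y(\alpha)^{1/|\alpha|}, \qquad L^- := \liminf_{|\alpha|\to\infty,\,\alpha/|\alpha|\to\theta} Y(\alpha)^{1/|\alpha|}.
\]
Since $L^-\le L^+$ is trivial, the goal is the reverse inequality $L^+\le L^-$. The key idea is to transfer the value of $Y$ at a well-chosen ``model'' point $\alpha^*$ to a general $\beta$ along a sequence realizing $L^+$, using subexponential growth to swallow the shifts produced by the correction vectors of $\calF$.

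First I would iterate weak submultiplicativity: for any $\alpha^*\in\ZZ_{\ge 0}^m$ and any integer $k\ge 1$, an easy induction produces $\gamma_1,\dots,\gamma_{k-1}\in\calF$ with
\[
Y\bigl(k\alpha^* + {\textstyle\sum}_{j=1}^{k-1}\gamma_j\bigr) \le Y(\alpha^*)^k,
\]
and one further application yields, for any $\rho\in\ZZ_{\ge 0}^m$, a $\gamma_k\in\calF$ such that
\[
Y\bigl(k\alpha^* + {\textstyle\sum}_{j=1}^{k}\gamma_j + \rho\bigr) \le Y(\alpha^*)^k\,Y(\rho).
\]
Let $M$ be an upper bound on every coordinate of every element of $\calF$; then the shift $\sum_j\gamma_j$ has each coordinate bounded by $kM$.

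Next, fix $\epsilon>0$ and a parameter $c\in(0,1)$. Using that $\theta\in\Sigma_m^\circ$ gives $\theta_i>0$ for every $i$, I would choose $\alpha^*$ from a sequence realizing $L^-$ with $\alpha^*/|\alpha^*|$ close to $\theta$, with $|\alpha^*|$ large enough that $(1-c)\theta_i|\alpha^*|>M$ for every $i$, and with $Y(\alpha^*)^{1/|\alpha^*|}\le L^-+\epsilon$. For $\beta$ with $\beta/|\beta|$ close to $\theta$ and $|\beta|/|\alpha^*|$ large, set $k:=\lfloor c|\beta|/|\alpha^*|\rfloor$ and $\rho:=\beta-k\alpha^*-\sum_j\gamma_j$. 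A componentwise estimate using $\beta_i\approx\theta_i|\beta|$, $\alpha^*_i\approx\theta_i|\alpha^*|$ and the bound $kM$ on each coordinate of $\sum_j\gamma_j$ confirms $\rho\in\ZZ_{\ge 0}^m$, with $|\rho|/|\beta|=1-c+o(1)$ as $|\beta|\to\infty$.

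Finally, taking $|\beta|$-th roots of $Y(\beta)\le Y(\alpha^*)^k Y(\rho)$ and using subexponential growth $Y(\rho)\le Cr^{|\rho|}$,
\[
L^+ \le Y(\alpha^*)^{c/|\alpha^*|}\,r^{1-c+o(1)}.
\]
Sending $c\to 1$ and then $|\alpha^*|\to\infty$ along the liminf sequence gives $L^+\le L^-+\epsilon$, and since $\epsilon$ was arbitrary, $L^+=L^-=:T(\theta)$. The main obstacle is the linear-in-$k$ accumulation of error from the correction vectors in $\calF$: a priori this drift is as large as the main term, and only the subexponential bound $Y(\rho)\le Cr^{|\rho|}$ (together with the fact that $\theta_i>0$ provides componentwise room to place the shift $\sum_j\gamma_j$) converts it into an $o(1)$ contribution to the exponent.
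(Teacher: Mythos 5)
Your argument is correct and matches the paper's Fekete-type proof: both express a far-off $\beta$ realizing the $\limsup$ as $k$ copies of a base point $\alpha^*$ drawn from a sequence realizing the $\liminf$, plus $\calF$-shifts and a small remainder $\rho$, using $\theta\in\Sigma_m^{\circ}$ for componentwise room and the subexponential bound to absorb $Y(\rho)$. The only cosmetic difference is that you create slack via a multiplicative factor $c<1$ sent to $1$ at the end — so the limits should really be taken in the order $|\beta|\to\infty$, then $|\alpha^*|\to\infty$, then $c\to 1$, since the room condition $(1-c)\theta_i|\alpha^*|>M$ forces $|\alpha^*|$ to grow as $c\to 1$ — whereas the paper first takes the exact quotient $q_j$ and then shrinks it to $s_j$ with an additive margin $\max_{\gamma\in\calF,\nu}\gamma_\nu$ that becomes negligible automatically as $|\alpha_{(j)}|\to\infty$.
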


\begin{proof}
Let $\{\alpha_{(j)}\}$ and $\{\tilde \alpha_{(j)}\}$ be sequences in
$\ZZ_{\geq 0}^m$ such that
$\frac{\alpha_{(j)}}{|\alpha_{(j)}|},
\frac{\tilde\alpha_{(j)}}{|\tilde\alpha_{(j)}|}\to\theta$ as $j\to\infty$ 
and  
\begin{eqnarray*}
\lim_{j\to\infty} Y(\alpha_{(j)})^{\frac{1}{|\alpha_{(j)}|}}&=&
\liminf_{{|\alpha|\to\infty, \frac{\alpha}{|\alpha|}\to\theta}}
Y(\alpha)^{\frac{1}{|\alpha|}}:=L_1, \\ 
\lim_{j\to\infty}
Y(\tilde\alpha_{(j)})^{\frac{1}{|\tilde\alpha_{(j)}|}}&=&
\limsup_{{|\alpha|\to\infty, \frac{\alpha}{|\alpha|}\to\theta}}
Y(\alpha)^{\frac{1}{|\alpha|}}:=L_2. 
\end{eqnarray*}
To prove the lemma it is sufficient to show that $L_2\leq L_1$.  By
passing to subsequences we may assume that
$\frac{|\tilde\alpha_{(j)}|}{|\alpha_{(j)}|}\to\infty$ as
$j\to\infty$.   

Let $q_j$ denote the largest non-negative integer for which all the
components of $r_{(j)}:=\tilde\alpha_{(j)}-q_j\alpha_{(j)}$ are
non-negative.  We claim that
\begin{equation} 
\label{eqn:3.2b}
 \frac{q_j|\alpha_{(j)}|}{|\tilde\alpha_{(j)}|}\to
 1,\ \frac{|r_{(j)}|}{|\tilde\alpha_{(j)}|}\to 0 \quad \hbox{as }
 j\to\infty. 
\end{equation}
Write $\alpha_{(j)}=|\alpha_{(j)}|(\theta +\epsilon_{(j)})$ and
$\tilde\alpha_{(j)}=|\tilde\alpha_{(j)}|(\theta+\tilde\epsilon_{(j)})$
where $\epsilon_{(j)},\tilde\epsilon_{(j)}\to 0$ as $j\to\infty$.  
A calculation in components shows that
\begin{equation}
\label{eqn:3.3b}
\tilde\alpha_{(j)\nu} =
\frac{|\tilde\alpha_{(j)}|}{|\alpha_{(j)}|}\left( 1  +
\frac{|\alpha_{(j)}|}{\alpha_{(j)\nu}}(\tilde\epsilon_{(j)\nu}-
\epsilon_{(j)\nu})
\right) \alpha_{(j)\nu}  \quad \hbox{for each } \nu=1,\dots,m, 
\end{equation}
where we write $\alpha_{(j)}=(\alpha_{(j)1},\dots,\alpha_{(j)m})$,
etc.  For any $\nu$, we have
\[
\frac{|\alpha_{(j)}|}{\alpha_{(j)\nu}}(\tilde\epsilon_{(j)\nu}-
\epsilon_{(j)\nu}) \longrightarrow \frac{1}{\theta_{\nu}}(0-0) \ = \ 0
\quad \hbox{as } j\to\infty.
\]
(Here we use the fact that $\theta\in\Sigma_m^{\circ}$, so
$\theta_{\nu}\neq 0$.) This says that given $\epsilon>0$, the quantity
in parentheses on the right-hand side of \eqref{eqn:3.3b} exceeds
$1-\epsilon$ for all $\nu$ when $j$ is sufficiently large. The
definition of $q_j$ then implies that
\[
q_j\geq \frac{|\tilde\alpha_{(j)}|}{|\alpha_{(j)}|}\left( 1 -\epsilon
\right) -1,\] and hence $
\frac{q_j|\alpha_{(j)}|}{|\tilde\alpha_{(j)}|} \ \geq \ 1 -\epsilon -
\frac{|\alpha_{(j)}|}{|\tilde\alpha_{(j)}|} \ \longrightarrow
\ 1-\epsilon$ as $j\to\infty$.  On the other hand,
$\frac{q_j|\alpha_{(j)}|}{|\tilde\alpha_{(j)}|}\leq 1$ for all $j$.
Since $\epsilon$ is arbitrary, \eqref{eqn:3.2b} follows.

Let $c:=\max\{\gamma_{\nu}: \nu\in\{1,\dots,m\},(\gamma_1,\dots,\gamma_m)\in\calF\}$, and let $s_j$ be the largest non-negative integer such that 
\begin{equation*}
s_j(\alpha_{(j)\nu}+c)\leq q_j\alpha_{(j)\nu} \quad\hbox{for all }\nu=1,\dots,m.
\end{equation*} 
   Using this, there exists $\tilde r_{(j)}\in\ZZ_{\geq 0}^m$ such that  
\begin{eqnarray*}
Y(\tilde\alpha_{(j)}) &=& Y(q_j\alpha_{(j)} +r_{(j)}) 
 \ = \ Y(s_j\alpha_{(j)}+s_j\gamma_{(j)}+\tilde r_{(j)}),
\end{eqnarray*}
where $\gamma_{(j)}\in\calF$ satisfies
$Y(2\alpha_{(j)}+\gamma_{(j)})\leq Y(\alpha_{(j)})^2$.  It is easy to
see that $\frac{|q_j|}{|s_j|}\to 1$, and hence \eqref{eqn:3.2b} holds
with $q_j,r_{(j)}$ replaced by $s_j,\tilde r_{(j)}$.  Finally,
\begin{eqnarray*}
Y(\tilde\alpha_{(j)})^{\frac{1}{|\tilde\alpha_{(j)}|}}  &=&
Y(s_j\alpha_{(j)}+s_j\gamma_{(j)}+\tilde
r_{(j)})^{\frac{1}{|\tilde\alpha_{(j)}|}} \\ 
&\leq& \bigl(Y(\alpha_{(j)})^{s_j}Y(\tilde
r_{(j)})\bigr)^{\frac{1}{|\tilde\alpha_{(j)}|}} \leq
\bigl(Y(\alpha_{(j)})^{\frac{1}{|\alpha_{(j)}|}}\bigr)^{\frac{s_j|\alpha_{(j)}|}{|\tilde\alpha_{(j)}|}}
C^{\frac{1}{|\tilde\alpha_{(j)}|}}r^{\frac{|\tilde
    r_{(j)}|}{|\tilde\alpha_{(j)}|}}, 
\end{eqnarray*}
where $C,r$ are as in Definition \ref{def:Y}.  Taking the limit as
$j\to\infty$ of the first and last expressions yields $L_2\leq L_1$.
This completes the proof.
\end{proof}


Recall that a positive real-valued function $f$ on a convex set
$C\subseteq\RR^n$ is said to be logarithmically convex if
$f((1-t)a+tb)\leq f(a)^{1-t}f(b)^t$ for all $a,b\in C$; equivalently,
$\log(f)$ is convex.

\begin{lemma} 
\label{lem:3.8}
The function $\theta\mapsto T(\theta)$, defined as in the previous
lemma, is uniformly bounded and logarithmically convex on
$\Sigma_m^{\circ}$ (and hence continuous).
\end{lemma}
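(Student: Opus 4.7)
My plan is to follow Zaharjuta's classical submultiplicativity argument, leveraging the fact (already established in Lemma~\ref{lem:3.5a}) that $T(\theta)$ is a genuine limit along \emph{any} admissible sequence, not merely a $\limsup$. Boundedness is immediate: subexponential growth gives $Y(\alpha)\leq Cr^{|\alpha|}$, so $Y(\alpha)^{1/|\alpha|}\leq C^{1/|\alpha|}r \to r$ as $|\alpha|\to\infty$, whence $T(\theta)\leq r$ uniformly in $\theta\in\Sigma_m^\circ$.

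For logarithmic convexity, fix $\theta_1,\theta_2\in\Sigma_m^\circ$ and $\lambda\in(0,1)$, and set $\theta:=(1-\lambda)\theta_1+\lambda\theta_2\in\Sigma_m^\circ$. I would first choose positive integer sequences $p_j,q_j\to\infty$ with $p_j/(p_j+q_j)\to 1-\lambda$. Using that all coordinates of $\theta_1$ are strictly positive, produce $\alpha^{(j)}\in\ZZ_{\geq 0}^m$ with $|\alpha^{(j)}|=p_j$ and $\alpha^{(j)}/p_j\to\theta_1$ (for instance by setting $\alpha^{(j)}_\nu=\lfloor p_j\theta_{1\nu}\rfloor$ and absorbing the discrepancy into one coordinate); do likewise for $\beta^{(j)}$ in the direction $\theta_2$. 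Weak submultiplicativity then furnishes $\gamma_j\in\calF$ with $Y(\alpha^{(j)}+\beta^{(j)}+\gamma_j)\leq Y(\alpha^{(j)})Y(\beta^{(j)})$.

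Set $N_j:=p_j+q_j+|\gamma_j|$. Since $\calF$ is finite, $|\gamma_j|$ is bounded, so $N_j\to\infty$ and $(\alpha^{(j)}+\beta^{(j)}+\gamma_j)/N_j\to\theta$. Raising the submultiplicativity inequality to the $1/N_j$ power gives
\[
Y(\alpha^{(j)}+\beta^{(j)}+\gamma_j)^{1/N_j} \ \leq \ \bigl(Y(\alpha^{(j)})^{1/p_j}\bigr)^{p_j/N_j}\bigl(Y(\beta^{(j)})^{1/q_j}\bigr)^{q_j/N_j}.
\]
By Lemma~\ref{lem:3.5a} the left-hand side converges to $T(\theta)$, and because $p_j/N_j\to 1-\lambda$, $q_j/N_j\to\lambda$, $Y(\alpha^{(j)})^{1/p_j}\to T(\theta_1)$, and $Y(\beta^{(j)})^{1/q_j}\to T(\theta_2)$, the right-hand side tends to $T(\theta_1)^{1-\lambda}T(\theta_2)^\lambda$. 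Passing to the limit yields the desired log-convexity inequality $T(\theta)\leq T(\theta_1)^{1-\lambda}T(\theta_2)^\lambda$. Continuity on $\Sigma_m^\circ$ is then automatic from the standard fact that a finite-valued convex function on an open convex subset of $\RR^m$ is continuous.

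The only delicate point is the simultaneous arithmetic bookkeeping: arranging integer vectors with prescribed direction $\theta_i$ and prescribed total degree $p_j$ (respectively $q_j$) so that $p_j/N_j\to 1-\lambda$. Openness of $\Sigma_m^\circ$, which makes all target coordinates strictly positive, turns this into elementary rounding; and since Lemma~\ref{lem:3.5a} delivers the limit along every admissible sequence, no subsequence extraction or diagonal argument is required to conclude.
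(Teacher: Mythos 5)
Your proof is correct, and it takes a cleaner route than the paper's. Both proofs rely on Lemma~\ref{lem:3.5a} (that $T(\theta)$ is a genuine limit along any admissible sequence) together with weak submultiplicativity, but the bookkeeping differs in an instructive way. The paper fixes sequences $\alpha_{(j)},\tilde\alpha_{(j)}$ of \emph{equal} total degree $a_j$ and then scales them by integers $q_j,\tilde q_j$ with $q_j/(q_j+\tilde q_j)\to t$; this requires an \emph{iterated} application of weak submultiplicativity to bound $Y(q_j\alpha_{(j)}+\cdots)$ by $Y(\alpha_{(j)})^{q_j}$, which accumulates $q_j-1$ correction vectors from $\calF$ (the paper writes them compactly as $(q_j-1)\gamma_{(j)}$, but in general one must sum $q_j-1$ possibly distinct elements of $\calF$; either way, their total magnitude is $O(q_j)$, which is small relative to $q_ja_j$ once $a_j\to\infty$). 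You instead let the two index sequences have the \emph{unequal} degrees $p_j$ and $q_j$ directly, with $p_j/(p_j+q_j)\to 1-\lambda$, so that a \emph{single} application of weak submultiplicativity suffices and only one bounded correction $\gamma_j\in\calF$ appears. This sidesteps the iteration entirely and makes the convergence $(\alpha^{(j)}+\beta^{(j)}+\gamma_j)/N_j\to\theta$ transparent. The trade-off is negligible: the paper's version keeps the ``base'' indices on a common degree scale, which is aesthetically parallel to Zaharjuta's original argument, while yours is a bit more direct in the weakly submultiplicative setting that the paper actually works in. One small point worth making explicit in a final write-up: when some $T(\theta_i)=0$, the limit of the right-hand side is still $0$ by continuity of $x\mapsto x^{\lambda}$ at $0$, so the inequality $T(\theta)\le T(\theta_1)^{1-\lambda}T(\theta_2)^{\lambda}$ holds in $[0,\infty)$ without extra caveats.
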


\begin{proof}
Boundedness follows easily from subexponential growth: if
$Y(\alpha)\leq Cr^{|\alpha|}$ for all $\alpha\in\ZZ_{\geq 0}^m$ then
$T(\theta)\leq r$ for all $\theta\in\Sigma_m^{\circ}$.

To prove logarithmic convexity, fix
$\theta,\tilde\theta\in\Sigma_m^{\circ}$ and $t\in(0,1)$.  Let
$\alpha_{(j)},\alpha_{(j)}$ satisfy
$\frac{\alpha_{(j)}}{|\alpha_{(j)}|}\to\theta,
\frac{\tilde\alpha_{(j)}}{|\tilde\alpha_{(j)}|}\to\tilde\theta$
as $j\to\infty$ and $|\alpha_{(j)}|=|\tilde\alpha_{(j)}|=:a_j$ for
each $j$.  Let $q_j,\tilde q_j$ be positive integers such that
$\frac{q_j}{q_j+\tilde q_j}\to t$ as $j\to\infty$.

For each $j$ there exist
$\beta_{({j})},\gamma_{(j)},\tilde\gamma_{(j)}\in\calF$ such that
\begin{eqnarray*}
&& Y\bigl(q_j\alpha_{(j)} +\tilde q_j\tilde\alpha_{(j)}
  +\beta_{(j)}+(q_j-1)\gamma_{(j)}+(\tilde
  q_j-1)\tilde\gamma_{(j)}\bigr)  \\ 
&&\quad \ \leq
  \ Y\bigl(q_j\alpha_{(j)}+(q_j-1)\gamma_{(j)}\bigr)\,Y\bigl(\tilde
  q_j\tilde\alpha_{(j)}+(\tilde q_j-1)\tilde\gamma_{(j)}\bigr) \ \leq
  \ Y(\alpha_{(j)})^{q_j}Y(\tilde\alpha_{(j)})^{\tilde q_j}.  
\end{eqnarray*}
Let $\zeta_{(j)}:=q_j\alpha_{(j)} +\tilde q_j\tilde\alpha_{(j)}
+\beta_{(j)}+(q_j-1)\gamma_{(j)}+(\tilde q_j-1)\tilde\gamma_{(j)}$.
Since $\calF$ is bounded, it is easy to see that
$\frac{|\zeta_{(j)}|}{|q_j\alpha_{(j)} +\tilde
  q_j\tilde\alpha_{(j)}|}\to 1$ as $j\to\infty$ and  
\[
\lim_{j\to\infty} \frac{\zeta_{(j)}}{|\zeta_{(j)}|} =
\lim_{j\to\infty} \frac{q_j\alpha_{(j)} +\tilde
  q_j\tilde\alpha_{(j)}}{|q_j\alpha_{(j)} +\tilde
  q_j\tilde\alpha_{(j)}|}
=\lim_{j\to\infty}\frac{q_j\alpha_{(j)}}{(q_j+\tilde q_j)a_j} +
\frac{\tilde q_j\tilde\alpha_{(j)}}{(q_j+\tilde q_j)a_j}=t\theta
+(1-t)\tilde\theta.
\]
Hence
\begin{eqnarray*}
T(t\theta+(1-t)\tilde\theta) &=&
\lim_{j\to\infty}Y(\zeta_{(j)})^{\frac{1}{|\zeta_{(j)}|}} \\ 
& = &  \lim_{j\to\infty}Y(\zeta_{(j)})^{\frac{1}{|q_j\alpha_{(j)}
    +\tilde q_j\tilde\alpha_{(j)}|}} \\ 
&\leq&  \lim_{j\to\infty}
\bigl(Y(\alpha_{(j)})^{\frac{1}{|\alpha_{(j)}|}}\bigr)^{\frac{q_j}{q_j+\tilde
    q_j}}\bigl(Y(\tilde\alpha_{(j)})^{\frac{1}{|\tilde\alpha_{(j)}|}}
\bigr)^{\frac{\tilde q_j}{q_j+\tilde q_j}} =
T(\theta)^tT(\tilde\theta)^{1-t}, 
\end{eqnarray*}
which concludes the proof.
\end{proof}

Given $b\in\partial\Sigma_m=\Sigma_m\setminus\Sigma_m^{\circ}$, define
\begin{equation} 
\label{eqn:3.3c}
T^-(b):= \liminf_{|\alpha|\to\infty,\frac{\alpha}{|\alpha|}\to b}
Y(\alpha)^{\frac{1}{|\alpha|}}.
\end{equation}

\begin{lemma}
Let $b\in\partial\Sigma_m$.  Then 
\[
T^-(b) \ = \ \liminf_{\theta\to b,\; \theta\in\Sigma_m^{\circ}} T(\theta) .
\]
\end{lemma}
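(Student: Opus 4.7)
The plan is to prove both inequalities $T^-(b) \le \liminf_{\theta \to b,\, \theta \in \Sigma_m^{\circ}} T(\theta)$ and $T^-(b) \ge \liminf_{\theta \to b,\, \theta \in \Sigma_m^{\circ}} T(\theta)$.

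For the first inequality, I would use a diagonal argument. Choose $\theta_k \in \Sigma_m^{\circ}$ with $\theta_k \to b$ along which $T(\theta_k)$ tends to $\liminf_{\theta \to b} T(\theta)$. By Lemma \ref{lem:3.5a}, each $T(\theta_k)$ is the limit of $Y(\alpha)^{1/|\alpha|}$ along any sequence with $\alpha/|\alpha| \to \theta_k$, so one can select $\alpha^{(k)}$ with $|\alpha^{(k)}| \ge k$, $\|\alpha^{(k)}/|\alpha^{(k)}| - \theta_k\| < 1/k$, and $|Y(\alpha^{(k)})^{1/|\alpha^{(k)}|} - T(\theta_k)| < 1/k$. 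Then $\alpha^{(k)}/|\alpha^{(k)}| \to b$, so the definition of $T^-(b)$ gives $T^-(b) \le \lim_k Y(\alpha^{(k)})^{1/|\alpha^{(k)}|} = \liminf_{\theta \to b} T(\theta)$.

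For the reverse inequality I would perturb a minimizing sequence for $T^-(b)$ slightly into $\Sigma_m^{\circ}$ using weak submultiplicativity. Pick $\alpha_j \in \ZZ^m_{\ge 0}$ with $|\alpha_j| \to \infty$, $\alpha_j/|\alpha_j| \to b$, and $Y(\alpha_j)^{1/|\alpha_j|} \to T^-(b)$, set $\alpha^* := (1,\dots,1) \in \ZZ^m_{>0}$, and let $M := \max_{\gamma \in \calF} |\gamma|$. Iterating weak submultiplicativity by alternately appending $\alpha_j$ and $\alpha^*$ for a total of $2l-1$ applications produces some $\delta_{j,l} \in \ZZ^m_{\ge 0}$ with $|\delta_{j,l}| \le (2l-1)M$ such that
\[
Y(l\alpha_j + l\alpha^* + \delta_{j,l}) \ \le \ Y(\alpha_j)^l\, Y(\alpha^*)^l.
\]
Set $\beta_{j,l} := l\alpha_j + l\alpha^* + \delta_{j,l}$. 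Since $\delta_{j,l}/l$ is bounded, I may pass (for each fixed $j$) to a subsequence in $l$ along which $\delta_{j,l}/l \to \tau_j$ with $|\tau_j| \le 2M$ and $\tau_j \ge 0$ componentwise. Then $\beta_{j,l}/|\beta_{j,l}|$ converges to $\hat\theta_j := (\alpha_j + \alpha^* + \tau_j)/(|\alpha_j| + m + |\tau_j|)$, which lies in $\Sigma_m^{\circ}$ because every component of $\alpha_j + \alpha^* + \tau_j$ is at least $1$ (contributed by $\alpha^*$); moreover $\hat\theta_j \to b$ as $j\to\infty$. Applying Lemma \ref{lem:3.5a} along the subsequence yields
\[
T(\hat\theta_j) \ \le \ \bigl(Y(\alpha_j)\,Y(\alpha^*)\bigr)^{1/(|\alpha_j|+m+|\tau_j|)} \ = \ \bigl(Y(\alpha_j)^{1/|\alpha_j|}\bigr)^{|\alpha_j|/(|\alpha_j|+m+|\tau_j|)}\, Y(\alpha^*)^{1/(|\alpha_j|+m+|\tau_j|)},
\]
and since $|\alpha_j|\to\infty$ while $m+|\tau_j|$ stays bounded, the right-hand side tends to $T^-(b)\cdot 1 = T^-(b)$, giving $\liminf_{\theta\to b,\,\theta\in\Sigma_m^{\circ}} T(\theta) \le T^-(b)$.

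The main obstacle will be correctly managing the accumulated errors $\delta_{j,l}$ from the iterated weak submultiplicativity: on the one hand the linear bound $|\delta_{j,l}| \le (2l-1)M$ is essential for $\delta_{j,l}/l$ to remain bounded so that a limit direction $\hat\theta_j$ exists; on the other hand, one must verify that $\hat\theta_j$ does not drift back to $\partial\Sigma_m$, which is ensured by the nonnegativity of elements of $\calF$ combined with the strictly positive shift $l\alpha^*$ baked into $\beta_{j,l}$.
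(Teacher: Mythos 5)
Your first inequality is exactly the paper's diagonal argument. Your reverse inequality, however, is a genuinely different route and it does work. The paper perturbs the minimizing sequence $\{\alpha_j\}$ for $T^-(b)$ by adding auxiliary indices $\ell_{(j)}$ growing in a fixed interior direction $\sigma$, applies weak submultiplicativity exactly once, and then crucially uses the subexponential growth bound $Y(\ell_{(j)}) \le C r^{|\ell_{(j)}|}$ to control the perturbation; the resulting estimate $T\bigl(\tfrac{b+\sigma}{1+|\sigma|}\bigr)\leq r^{|\sigma|/(1+|\sigma|)}\,T^-(b)^{1/(1+|\sigma|)}$ is then squeezed by letting $|\sigma|\to0$. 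You instead fix a single interior shift $\alpha^*=(1,\dots,1)$, iterate weak submultiplicativity $2l-1$ times to obtain $Y(l\alpha_j+l\alpha^*+\delta_{j,l})\le Y(\alpha_j)^l\,Y(\alpha^*)^l$ with $|\delta_{j,l}|\leq(2l-1)M$, extract a subsequential limit direction $\hat\theta_j\in\Sigma_m^\circ$ as $l\to\infty$, and then let $j\to\infty$ so that the fixed perturbation $\alpha^*+\tau_j$ is washed out by $|\alpha_j|\to\infty$. Your version never needs the subexponential growth bound on the perturbation term (only $Y(\alpha^*)$ as a fixed constant), at the cost of the bookkeeping of the accumulated $\delta_{j,l}$ and a compactness argument for $\tau_j$; the paper's version is shorter because the growth hypothesis does that work in one line. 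Both roles of the perturbation vanishing (the paper's $|\sigma|\to0$ versus your $|\alpha_j|\to\infty$) achieve the same convergence $\hat\theta_j\to b$. Your edge cases ($T^-(b)=0$, $Y(\alpha^*)=0$, or some $Y(\alpha_j)=0$) all go through since the bound then forces $T(\hat\theta_j)=0$.
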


\begin{proof}
Let $\{\theta_{(j)}\}_{j\geq 1}$ be a sequence of points in $\Sigma_m^0$ with  $\theta_{(j)}\to b$ as $j\to\infty$, and for each $j$ choose
$\alpha_{(j)}$ such that
\[
|\tfrac{\alpha_{(j)}}{|\alpha_{(j)}|}-\theta_{(j)}|<1/j\, , \quad
|Y(\alpha_{(j)})^{\frac{1}{|\alpha_{(j)}|}}-T(\theta_{(j)})|<1/j \, . 
\]
Then $\frac{\alpha_{(j)}}{|\alpha_{(j)}|}\to b$ as $j\to\infty$, so   
\[
T^{-}(b) \leq  \liminf_{j\to\infty}
Y(\alpha_{(j)})^{\frac{1}{|\alpha_{(j)}|}} \leq \liminf_{j\to\infty}
(T(\theta_{(j)})+{1}/{j}) = \liminf_{j\to\infty} T(\theta_{(j)}).
\]
Hence $\displaystyle T^-(b) \leq \liminf_{\theta\to b,\;
  \theta\in\Sigma_m^{\circ}} T(\theta)$ since the sequence
$\theta_{(j)}$ was arbitrary.

It remains to prove the reverse inequality.  Let
$\sigma=(\sigma_1,\dots,\sigma_m)$ satisfy $\sigma_{\nu}>0$ for each
$\nu$; then $\frac{b+\sigma}{1+|\sigma|}\in\Sigma_m^{\circ}$.  We will
show that  
\begin{equation} 
\label{eqn:3.5a}
T(\tfrac{b+\sigma}{1+|\sigma|}) \leq r^{\frac{|\sigma|}{1+|\sigma|}}
T^-(b)^{\frac{1}{1+|\sigma|}}. 
\end{equation}
(Here $r$ is as in Definition \ref{def:Y}.)

Choose sequences $\alpha_{(j)},\ell_{(j)}$ in $\ZZ_{\geq 0}^m$ such that $|\alpha_{(j)}|\to\infty$ and 
\[
\frac{\alpha_{(j)}}{|\alpha_{(j)}|}\to b \hbox{ with }
Y(\alpha_{(j)})^{\frac{1}{|\alpha_{(j)}|}}\to T^-(b),\quad \hbox{and }
\frac{\ell_{(j)}}{|\alpha_{(j)}|}\to\sigma.
\]
Since $Y$ is weakly submultiplicative with subexponential growth,  
\begin{equation} 
\label{eqn:3.6a} Y(\ell_{(j)}+\alpha_{(j)}+\gamma_{(j)})\leq
Y(\ell_{(j)})Y(\alpha_{(j)})\leq Cr^{|\ell_{(j)}|}Y(\alpha_{(j)})
\end{equation}
for appropriate $\gamma_{(j)}\in\calF$.

We compute
$\frac{\ell_{(j)}}{|\alpha_{(j)}+\ell_{(j)}|}\to\frac{\sigma}{1+|\sigma|}$
and $\frac{\alpha_{(j)}}{|\alpha_{(j)}+\ell_{(j)}|} \to
\frac{b}{1+|\sigma|}$ as $j\to\infty$.  Since $\calF$ is bounded we
also have
$\frac{\gamma_{(j)}}{|\ell_{(j)}+\alpha_{(j)}+\gamma_{(j)}|}\to(0,\dots,0)$
and
$\frac{|\ell_{(j)}+\alpha_{(j)}|}{|\ell_{(j)}+\alpha_{(j)}+\gamma_{(j)}|}\to
1$.  The inequality \eqref{eqn:3.6a} then yields \eqref{eqn:3.5a} by a
similar limiting process as detailed in the previous lemmas.  Finally,
using \eqref{eqn:3.5a}, we have
\[
\liminf_{{\theta\to b,\, \theta\in\Sigma_m^{\circ}}} T(\theta) \leq  
\liminf_{\substack{|\sigma|\to 0\\ \sigma_i>0\,\forall i }} 
T(\tfrac{b+\sigma}{1+|\sigma|})\leq \lim_{|\sigma|\to 0} 
r^{\frac{|\sigma|}{1+|\sigma|}}T^-(b)^{\frac{1}{1+|\sigma|}} = T^-(b), 
\]
which is the desired inequality.
\end{proof}

An immediate consequence of Lemma \ref{lem:3.8} and equation
\eqref{eqn:3.5a} is the following.

\begin{corollary} 
\label{cor:3.6a}
Suppose $T(\phi)\neq 0$ for some $\phi\in\Sigma_m^{\circ}$.  Then
$T(\theta)\neq 0$ for all $\theta\in\Sigma_m^{\circ}$ and $T^-(b)\neq 0$  for all
 $b\in\partial\Sigma_m$.  The same
conclusion holds if $T^-(c)\neq 0$ for some
$c\in\partial\Sigma_m$. \qed 
\end{corollary}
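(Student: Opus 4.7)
The plan is to break the corollary into two implications and reduce both to the logarithmic convexity of $T$ on $\Sigma_m^\circ$ (Lemma \ref{lem:3.8}) combined with the boundary inequality \eqref{eqn:3.5a}.

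First I would handle the interior claim: if $T(\phi)\neq 0$ for some $\phi\in\Sigma_m^\circ$, then $T(\theta)\neq 0$ for every $\theta\in\Sigma_m^\circ$. I argue by contrapositive. Suppose $T(\theta_0)=0$ at some $\theta_0\in\Sigma_m^\circ$, and let $\phi\in\Sigma_m^\circ$ be arbitrary. Because $\Sigma_m^\circ$ is relatively open and $\phi$ lies in it, one can extend the segment from $\theta_0$ through $\phi$ slightly beyond $\phi$ to reach a point $\theta'\in\Sigma_m^\circ$. Then $\phi=(1-t)\theta_0+t\theta'$ for some $t\in(0,1)$, and log-convexity of $T$ gives
\[
T(\phi)\ \le\ T(\theta_0)^{1-t}T(\theta')^{t}\ =\ 0,
\]
forcing $T(\phi)=0$. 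Thus $T$ vanishes identically on $\Sigma_m^\circ$, which is the contrapositive of what we want.

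Next, assuming $T(\phi)\neq 0$ for some $\phi\in\Sigma_m^\circ$ (and hence, by the previous step, for all such $\phi$), I deduce that $T^-(b)\neq 0$ for every $b\in\partial\Sigma_m$. Pick any $\sigma\in\RR^m$ with $\sigma_\nu>0$ for all $\nu$; then $\frac{b+\sigma}{1+|\sigma|}\in\Sigma_m^\circ$, so by the first step $T\bigl(\tfrac{b+\sigma}{1+|\sigma|}\bigr)>0$. Inequality \eqref{eqn:3.5a} then yields
\[
0\ <\ T\bigl(\tfrac{b+\sigma}{1+|\sigma|}\bigr)\ \le\ r^{\frac{|\sigma|}{1+|\sigma|}}\,T^-(b)^{\frac{1}{1+|\sigma|}},
\]
which forces $T^-(b)>0$.

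Finally, for the assertion that the conclusion still holds under the alternative hypothesis that $T^-(c)\neq 0$ for some $c\in\partial\Sigma_m$, I use the preceding lemma, which identifies $T^-(c)$ with $\liminf_{\theta\to c,\,\theta\in\Sigma_m^\circ}T(\theta)$. Since this liminf is strictly positive, there is a relative neighborhood of $c$ in $\Sigma_m^\circ$ on which $T>\tfrac12 T^-(c)>0$; in particular $T(\phi)\neq 0$ for some $\phi\in\Sigma_m^\circ$, and the two previous steps complete the argument.

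None of the steps requires delicate estimates beyond what Lemma \ref{lem:3.8} and \eqref{eqn:3.5a} already provide; the only conceptual point to watch is the direction in which log-convexity is applied—it gives an upper bound, which is exactly what propagates a zero of $T$ from one interior point to all others, and contrapositively propagates nonvanishing.
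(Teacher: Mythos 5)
Your proof is correct and fills in exactly the details the paper leaves implicit—the paper gives no proof and simply labels the corollary as an immediate consequence of Lemma \ref{lem:3.8} and inequality \eqref{eqn:3.5a}. Your use of log-convexity to propagate vanishing (hence, contrapositively, nonvanishing) through the interior, of \eqref{eqn:3.5a} to pass to the boundary, and of the preceding lemma's identification $T^-(c)=\liminf_{\theta\to c,\,\theta\in\Sigma_m^\circ}T(\theta)$ for the converse direction is precisely the intended argument.
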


\begin{lemma} 
\label{lem:3.10}
Let $Q$ be a compact subset of $\Sigma_m^{\circ}$.  Then 
\[
\limsup_{|\alpha|\to\infty} \bigl\{ |Y(\alpha)^{\frac{1}{|\alpha|}} - T(\theta(\alpha))|:\ \tfrac{\alpha}{|\alpha|}=:\theta(\alpha)\in Q
\bigr\}\ = \ 0.  
\]
If $T$ is as in the previous corollary, then also
\[
\limsup_{|\alpha|\to\infty} \bigl\{ |\log
Y(\alpha)^{\frac{1}{|\alpha|}} - \log
T(\theta(\alpha))|:\ \tfrac{\alpha}{|\alpha|}=:\theta(\alpha)\in Q
\bigr\}\ = \ 0.  
\]
\end{lemma}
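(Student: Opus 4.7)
The plan is to prove both assertions by the standard contradiction argument for upgrading pointwise convergence to uniform convergence on compact sets, using continuity of the limit.

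For the first assertion, suppose toward a contradiction that the limsup is some $\epsilon > 0$. Then there exists a sequence $\alpha_{(j)} \in \ZZ_{\geq 0}^m$ with $|\alpha_{(j)}| \to \infty$, $\theta_{(j)} := \alpha_{(j)}/|\alpha_{(j)}| \in Q$, and
\[
|Y(\alpha_{(j)})^{1/|\alpha_{(j)}|} - T(\theta_{(j)})| \,\geq\, \epsilon/2
\]
for every $j$. Since $Q$ is compact and contained in the open simplex $\Sigma_m^{\circ}$, we may pass to a subsequence (which I will not relabel) such that $\theta_{(j)} \to \theta^* \in Q \subseteq \Sigma_m^{\circ}$. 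By the very definition of $T(\theta^*)$ established in Lemma~\ref{lem:3.5a}, we have $Y(\alpha_{(j)})^{1/|\alpha_{(j)}|} \to T(\theta^*)$. On the other hand, by Lemma~\ref{lem:3.8} the function $T$ is continuous (being logarithmically convex) on $\Sigma_m^{\circ}$, so $T(\theta_{(j)}) \to T(\theta^*)$. Subtracting, the left-hand side above tends to $0$, contradicting the lower bound $\epsilon/2$.

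For the logarithmic statement, the assumption in Corollary~\ref{cor:3.6a} gives $T(\theta) > 0$ for every $\theta \in \Sigma_m^{\circ}$. Since $Q$ is a compact subset of $\Sigma_m^{\circ}$ and $T$ is continuous there, there exist constants $0 < \delta < M < \infty$ with $\delta \leq T(\theta) \leq M$ for all $\theta \in Q$; we may also take $M$ to be an upper bound for $Y(\alpha)^{1/|\alpha|}$ using the subexponential growth hypothesis. Applying the first assertion, for all sufficiently large $|\alpha|$ with $\theta(\alpha) \in Q$ we have $Y(\alpha)^{1/|\alpha|} \geq \delta/2$. On the compact interval $[\delta/2,M]$ the function $\log$ is uniformly continuous, so the log-estimate follows at once from the first assertion.

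The main obstacle, such as it is, lies in making sure that the limit $T(\theta^*)$ from Lemma~\ref{lem:3.5a} actually applies to the specific sequence $\alpha_{(j)}$: this requires $\theta_{(j)} = \alpha_{(j)}/|\alpha_{(j)}| \to \theta^*$ together with $|\alpha_{(j)}| \to \infty$, both of which hold by construction. No new ideas beyond continuity of $T$ (Lemma~\ref{lem:3.8}) and non-vanishing of $T$ on $\Sigma_m^{\circ}$ (Corollary~\ref{cor:3.6a}) are needed.
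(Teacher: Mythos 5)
Your argument is correct and matches the paper's proof in all essentials: both pass to a subsequence with $\theta_{(j)}\to\theta^*\in Q$, invoke Lemma~\ref{lem:3.5a} for the pointwise limit and Lemma~\ref{lem:3.8} for continuity of $T$, and handle the logarithmic version by bounding $T$ away from $0$ on the compact set $Q$. The only cosmetic difference is that you frame it as a contradiction while the paper directly shows the limsup value $L$ is zero via a triangle inequality.
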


\begin{proof}
Let $L$ denote the first limsup, and let $\{\alpha_{(j)}\}$ be a sequence  for which
\[
\lim_{j\to\infty} |Y(\alpha_{(j)})^{\frac{1}{|\alpha_{(j)}|}}  -
T(\theta_{(j)})| = L,
\]
where $\theta_{(j)}=\frac{\alpha_{(j)}}{|\alpha_{(j)}|}$.  We may assume that $\theta_{(j)}\to\theta\in Q$
by passing perhaps to a subsequence.  Then
\[
|Y(\alpha_{(j)})^{\frac{1}{|\alpha_{(j)}|}} - 
T(\theta_{(j)})| \leq |Y(\alpha_{(j)})^{\frac{1}{|\alpha_{(j)}|}}- T(\theta)|+|T(\theta)    -  T(\theta_{(j)})| 
\]
and as $j\to\infty$, the first expression on the right-hand side goes to
zero by Lemma \ref{lem:3.5a} and the second by continuity of $T$ (Lemma \ref{lem:3.8}).  So $L=0$ as required.

If $T$ is as in the previous corollary, then all quantities inside the second limsup are finite.  To prove this second statement, one do a similar argument as above, writing $\log Y(\alpha_{(j)})^{1/|\alpha_{(j)}|}$, $\log T(\theta_{(j)})$, etc. in place of $Y(\alpha_{(j)})^{1/|\alpha_{(j)}|}$, $T(\theta_{(j)})$.
\end{proof}

For a positive integer $s$, let $h_m(s)$ denote the number of elements
in the set $\{\alpha\in\ZZ_{\geq 0}^m: |\alpha|=s\}$; we have
$h_m(s)=\binom{s+m-1}{s}=\frac{(s+m-1)!}{s!(m-1)!}.$ 

\begin{lemma}
We have
\begin{equation} \label{eqn:lem4.8}
\frac{1}{h_m(s)}\sum_{|\alpha|=s}\log Y(\alpha)^{\frac{1}{|\alpha|}}
\ \longrightarrow
\ \frac{1}{\vol(\Sigma_m)}\int_{\Sigma_m^{\circ}}\log T(\theta)\,
d\theta \quad \hbox{as } s\to\infty,
\end{equation}
where on the right-hand side we integrate over $\theta$ with respect
to the usual $m$-dimensional volume on $\RR^m$, with
$\vol(\Sigma_m)=\int_{\Sigma_m}d\theta$.
\end{lemma}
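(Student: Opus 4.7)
The strategy is to interpret the left-hand side as a Riemann average over the lattice
$\Lambda_s := \{\alpha/s : \alpha \in \ZZ_{\geq 0}^m,\ |\alpha|=s\} \subset \Sigma_m$
and to bootstrap the pointwise/uniform convergence of $\log Y(\alpha)^{1/|\alpha|}$ to $\log T(\theta(\alpha))$ from Lemma~\ref{lem:3.10} into an integral convergence. Note that $h_m(s) = \frac{s^{m-1}}{(m-1)!} + O(s^{m-2})$, while $\Lambda_s$ has lattice spacing $1/s$ inside the $(m-1)$-dimensional simplex $\Sigma_m$, so $\Lambda_s$ equidistributes with respect to the normalized Lebesgue measure $d\theta/\vol(\Sigma_m)$.

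\textbf{Step 1 (interior comparison).} Fix $\delta > 0$ and set $Q_\delta := \{\theta \in \Sigma_m : \theta_i \geq \delta\ \forall\, i\}$, a compact subset of $\Sigma_m^\circ$. Assuming $T \not\equiv 0$, Corollary~\ref{cor:3.6a} places us in the second regime of Lemma~\ref{lem:3.10}, giving
\[
\max_{\substack{|\alpha|=s \\ \alpha/s \in Q_\delta}} \bigl|\log Y(\alpha)^{1/|\alpha|} - \log T(\alpha/s)\bigr| \longrightarrow 0 \quad \text{as } s\to\infty.
\]
Averaging over the at most $h_m(s)$ indices with $\alpha/s \in Q_\delta$ then yields
\[
\frac{1}{h_m(s)}\sum_{\alpha/s \in Q_\delta}\log Y(\alpha)^{1/|\alpha|} \ = \ \frac{1}{h_m(s)}\sum_{\alpha/s \in Q_\delta}\log T(\alpha/s) + o(1).
\]

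\textbf{Step 2 (Riemann sum on $Q_\delta$).} Since $\log T$ is continuous on $Q_\delta$ by Lemma~\ref{lem:3.8}, and the lattice $\Lambda_s \cap Q_\delta$ equidistributes to Lebesgue measure on $Q_\delta$, a standard Riemann-sum argument gives
\[
\frac{1}{h_m(s)}\sum_{\alpha/s \in Q_\delta}\log T(\alpha/s) \ \longrightarrow \ \frac{1}{\vol(\Sigma_m)}\int_{Q_\delta} \log T(\theta)\, d\theta.
\]

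\textbf{Step 3 (letting $\delta \to 0$).} For the right-hand side I would invoke dominated convergence: $\log T \le \log r$ on $\Sigma_m^\circ$ by subexponential growth, and $T^-(b) \neq 0$ on $\partial \Sigma_m$ together with the characterization $T^-(b) = \liminf_{\theta\to b}T(\theta)$ controls $\log T$ from below near $\partial\Sigma_m$, so $\log T$ is integrable on $\Sigma_m^\circ$ and $\int_{Q_\delta}\log T \to \int_{\Sigma_m^\circ}\log T$ as $\delta \to 0$. For the left-hand side, subexponential growth gives the uniform upper bound $\log Y(\alpha)^{1/s} \le \log r + O(1/s)$, while weak submultiplicativity combined with the same nonvanishing of $T^-$ provides a uniform lower bound on $\log Y(\alpha)^{1/s}$ for $\alpha$ near the boundary faces. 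Because $\vol(\Sigma_m \setminus Q_\delta) \to 0$, the proportion $|\{\alpha/s \in \Sigma_m \setminus Q_\delta\}|/h_m(s)$ tends to $\vol(\Sigma_m\setminus Q_\delta)/\vol(\Sigma_m)$, so the boundary contribution to the average is $o_\delta(1)$ uniformly in $s$.

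\textbf{Main obstacle.} The nontrivial step is the boundary control in Step~3: one must simultaneously (a) verify integrability of $\log T$ near $\partial \Sigma_m$, exploiting log-convexity and the nonvanishing of $T^-$, and (b) bound $\log Y(\alpha)^{1/s}$ from below for $\alpha/s$ close to $\partial\Sigma_m$ without having directly applicable uniform convergence from Lemma~\ref{lem:3.10} there. In the degenerate case where $T$ vanishes somewhere in $\Sigma_m^\circ$, Corollary~\ref{cor:3.6a} forces $T \equiv 0$ and the integral is $-\infty$; one then has to argue directly that the left-hand average also diverges to $-\infty$, using only weak submultiplicativity and subexponential growth.
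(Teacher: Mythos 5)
Your proposal follows essentially the same route as the paper: split $\{|\alpha|=s\}$ into the compact interior piece $Q_\delta$ and a thin boundary piece, invoke Lemma~\ref{lem:3.10} on $Q_\delta$, run a Riemann-sum (weak-$^*$) argument for $\log T$, bound the boundary contribution by counting, and handle the degenerate case $T\equiv 0$ separately. One observation that would streamline your Step~3: by Lemma~\ref{lem:3.8}, $\log T$ is a finite-valued convex function on the bounded set $\Sigma_m^{\circ}$ and is therefore automatically bounded below (and bounded above by subexponential growth), so $\log T$ is a bounded continuous function and your appeal to dominated convergence and integrability near $\partial\Sigma_m$ is unnecessary; the paper simply uses weak-$^*$ convergence against this bounded continuous function over all of $\Sigma_m$ and is then left to show that $\tfrac{1}{h_m(s)}\sum_{|\alpha|=s}|\log Y(\alpha)^{1/|\alpha|}-\log T(\theta(\alpha))|\to 0$, which it does by exactly the $L_1(s)/L_2(s)$ counting estimate ($|L_2(s)|/h_m(s)\le\delta m^2$) that mirrors your $Q_\delta$ split. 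The boundary lower bound on $\log Y(\alpha)^{1/|\alpha|}$ that you flag as the main obstacle is indeed where both arguments are terse — the paper asserts a uniform bound $(C^{1/s}+1)r$ on the summands in $L_2(s)$ without elaboration — so your diagnosis of the crux of the argument is accurate.
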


\begin{proof}
By Corollary \ref{cor:3.6a} we have two cases: either $T$ is never zero on $\Sigma_m^{\circ}$ or $T\equiv 0$.  

We consider the first case.  For convenience write $\theta(\alpha)=\frac{\alpha}{|\alpha|}$.  The
set $\Sigma(s):=\{\theta(\alpha) : |\alpha|=s\}$ is a uniformly
distributed grid of points on $\Sigma_m$ such that the discrete
probability measure
$\frac{1}{h_m(s)}\sum_{|\alpha|=s}\delta_{\theta(\alpha)}$ supported
on $\Sigma(s)$ converges weak$^*$ to $\frac{1}{\vol(\Sigma_m)}d\theta$
as $s\to\infty$.  Since $\theta\to T(\theta)$ is a bounded continuous
function on $\Sigma_m^{\circ}$ and $\vol(\partial\Sigma)=0$,
\[
\frac{1}{h_m(s)}\sum_{|\alpha|=s}\log T(\theta(\alpha))
\ \longrightarrow
\ \frac{1}{\vol(\Sigma_m)}\int_{\Sigma_m^{\circ}}\log T(\theta)\,
d\theta \quad \hbox{as } s\to\infty. 
\]
(To see this, note that the formula holds by weak$^*$ convergence when
$\log T(\theta)$ is replaced by $(1-\chi)\log T(\theta)$ with $\chi$
an arbitrary smooth cutoff function supported in a neighborhood of
$\partial\Sigma$; now shrink the support of $\chi$.) 

Hence to prove \eqref{eqn:lem4.8}, it is sufficient to show that
\begin{equation}
\label{eqn:3.7a}
\Bigl(\frac{1}{h_m(s)}\sum_{|\alpha|=s}|\log
Y(\alpha)^{\frac{1}{|\alpha|}} - \log T(\theta(\alpha))|
\Bigr)\longrightarrow 0 \quad \hbox{as } s\to\infty. 
\end{equation}
Fix $\delta>0$ and define the compact set $Q_{\delta} :=
\{\theta=(\theta_1,\dots,\theta_m)\in\Sigma_m^{\circ}:\ 
\theta_{\nu}\geq\delta\ \forall\,\nu \}$.  For a positive integer $s$, let
\[
L_1(s) := \{\alpha=(\alpha_1,\dots,\alpha_m)\in\ZZ_{\geq
  0}^m:\  |\alpha|=s, \ \tfrac{\alpha}{|\alpha|}\in Q_{\delta} \} 
\]
and let $L_2(s):=\ZZ_{\geq 0}^m\setminus L_1(s)$; write
\[
L_2(s) = \bigcup_{\nu=1}^m \{\alpha\in L_2(s):
\tfrac{\alpha_{\nu}}{s}<\delta\} =: \bigcup_{\nu=1}^m L_{2,\nu}(s). 
\]
Using $\alpha_{\nu}<\delta s$ and $\sum_{\eta\neq\nu}\alpha_{\eta}\leq
s$, we can estimate the size of $L_{2,\nu}(s)$ for each $\nu$ as
$|L_{2,\nu}(s)| \leq \delta s\binom{s+m-2}{s}$.  A calculation then
gives 
\[
\frac{|L_{2}(s)|}{h_m(s)} = \sum_{\nu=1}^m \frac{|L_{2,\nu}(s)|}{h_m(s)}
\leq m\cdot \frac{\delta s\binom{s+m-2}{s}}{\binom{s+m-1}{s} } \leq
\delta m^2. 
\]
Hence 
\begin{eqnarray*}
&& \hskip-1cm \frac{1}{h_m(s)}\sum_{|\alpha|=s}|\log
  Y(\alpha)^{\frac{1}{|\alpha|}} - \log T(\theta(\alpha))| \\ 
&=& \frac{1}{h_m(s)}\sum_{\alpha\in L_1(s)} |\log
  Y(\alpha)^{\frac{1}{|\alpha|}} - \log T(\theta(\alpha))| \\ 
&& \hskip4cm +\ \frac{1}{h_m(s)}\sum_{\alpha\in L_2(s)} |\log
  Y(\alpha)^{\frac{1}{|\alpha|}} - \log T(\theta(\alpha))| \\ 
&\leq& \frac{|L_1(s)|}{h_m(s)}\sup\bigl\{|\log
  Y(\alpha)^{\frac{1}{|\alpha|}} - \log T(\theta(\alpha))| :
  |\alpha|=s,\, \theta(\alpha)\in Q_{\delta}\bigr\} \\ 
&& \hskip5cm + \ \frac{|L_2(s)|}{h_m(s)} (C^{\frac{1}{s}}+1)r \\
&\leq & \sup\bigl\{|\log Y(\alpha)^{\frac{1}{|\alpha|}} - \log
  T(\theta(\alpha))|: \  
|\alpha|=s,\, \theta(\alpha)\in Q_{\delta}\bigr\} \ + \ \delta
m^2(C^{\frac{1}{s}}+1)r, 
\end{eqnarray*}
with $C,r$ as in Definition \ref{def:Y}.  By Lemma \ref{lem:3.10} the
sup in the above line goes to zero as $s\to\infty$, so
\[
\limsup_{s\to\infty}\Bigl(\frac{1}{h_m(s)}\sum_{|\alpha|=s}|\log
Y(\alpha)^{\frac{1}{|\alpha|}} - \log T(\theta(\alpha))| \Bigr) \leq
\delta m^2 2r.
\]
Since $\delta>0$ was arbitrary, \eqref{eqn:3.7a} follows.

For the case $T\equiv 0$, we need to show that  the left-hand side of \eqref{eqn:lem4.8} goes to $-\infty$ as $s\to\infty$. Fix a compact subset $Q$ of $\Sigma_m^{\circ}$. The first part of the previous lemma yields 
\[
\limsup \{Y(\alpha)^{1/|\alpha|}:  |\alpha|\to \infty,\ \alpha/|\alpha|\in Q \}=0.\]  Hence given $\epsilon>0$, 
\[
\sup\{ Y(\alpha)^{1/|\alpha|}: |\alpha|>N,\ \alpha/|\alpha|\in Q \}<\epsilon \]   
for sufficiently large $N$.  Using the notation $L_1(s),L_2(s)$ from the proof of the first case (with $Q$ in place of $Q_{\delta}$), we have 
\[
\frac{1}{h_m(s)}\sum_{|\alpha|=s, \frac{\alpha}{|\alpha|}\in Q}\log Y(\alpha)^{\frac{1}{|\alpha|}} \leq \frac{1}{h_m(s)}\sum_{|\alpha|=s, \frac{\alpha}{|\alpha|}\in Q}\log\epsilon = \frac{|L_1(s)|}{h_m(s)}\log\epsilon \leq \log\epsilon
\]
for $s>N$.  Finally, note that $Y(\alpha)^{1/|\alpha|}$ is uniformly bounded above for all $\alpha$ (say by some constant $M$) since $Y$ has subexponential growth. For all $s$,
\[\frac{1}{h_m(s)}\sum_{|\alpha|=s, \frac{\alpha}{|\alpha|}\not\in Q}\log Y(\alpha)^{\frac{1}{|\alpha|}} = \frac{|L_2(s)|}{h_m(s)}M\leq M.
\]
Altogether, $\frac{1}{h_m(s)}\sum_{|\alpha|=s}\log Y(\alpha)^{\frac{1}{|\alpha|}}\leq M+\log\epsilon$ when $s>N$.  Since $\epsilon$ is arbitrary, the left-hand side of \eqref{eqn:lem4.8} goes to $-\infty$ as required.
\end{proof}

\section{Chebyshev constants} 
\label{sec:cheby}

In this section we construct Chebyshev constants on an algebraic
variety $V\subseteq\CC^n$.  Suppose that $V$ satisfies the properties
\eqref{eqn:properties}.  As before, $R := \CC[z_1,\dots,z_m] \subseteq
\CC[V]$ is a Noether normalization, and $\bv_1,\dots,\bv_d$ are the
polynomials of Section \ref{sec:2}. We will write
$\lambda_1,\dots,\lambda_d$ for the interpolating points denoted by
$p_1,\dots,p_d$ earlier, so that we can use the letter `$p$' to denote
polynomials.  We also introduce some additional notation.

\begin{notation} \rm
 Recall that the basis $\calC$ of $\CC[V]$ was
constructed in Definition~\ref{def:basisCprec}, ordered by $\prec$.
Denote by $\{\be_j\}_{j=1}^{\infty}$ the enumeration of $\calC$
according to $\prec$.  For $f=\sum_ja_j\be_j\in\CC[V]$ we write
$\LT_{\prec}(f)=a_k\be_k$ for the leading term, i.e., $a_k\neq 0$ and
$a_j=0$ for all $j>k$.  For $f,g\in\CC[V]$, write $f\prec g$ if
$\LT_{\prec}(f)\prec\LT_{\prec}(g)$.

In what follows, $\alpha$ will always denote a multi-index in
$\ZZ_{\geq 0}^m$, and we write $\alpha=(\alpha',\alpha_m)$ where
$\alpha'\in\ZZ_{\geq 0}^{m-1}$ and $\alpha_m\in\ZZ_{\geq 0}$.  For
convenience, we will also identify $\alpha$ and $\alpha'$ with
$(\alpha_1,\dots,\alpha_m,0,\dots,0)$ and
$(\alpha_1,\dots,\alpha_{m-1},0,\dots,0)$ in $\ZZ_{\geq 0}^n$ when
using multi-index notation (i.e., in expressions such as $z^{\alpha}$).
\end{notation}

\begin{definition}\rm
Let $\alpha\in\ZZ_{\geq 0}^{m}$ be a multi-index. Define for
$i=1,\dots,d$ the collection of polynomials    
\begin{eqnarray*}
\calM_i(\alpha) &:=&  \Bigl\{p(z)\in\CC[V] : p(z)\, = \,
z^{\alpha}\bv_i \, +\, g(z), \ g(z)\prec z^{\alpha}\bv_i \Bigr\}. 
\end{eqnarray*} 

Fix a compact set $K\subseteq V$. We define the function
$Y_{i}:\ZZ_{\geq 0}^m\to\RR_{\geq 0}$ by 
\[
Y_i(\alpha) \  := \  \inf\{\|p\|_K : p\in\calM_i(\alpha)\}.
\]
\end{definition}

For a fixed $i\in\{1,\dots,d\}$, we will write $\displaystyle
\ell_i(z^{\alpha})$ to denote an arbitrary $g\in\CC[V]$ with $g\prec
z^{\alpha}\bv_i$.  An immediate consequence of Lemma \ref{lem:2.4} is
the following. 

\begin{lemma} 
\label{lem:3.3a}
We have $\bv_i^2 = z_m^{t}\bv_i + \ell_i(z_m^t)$ and $\bv_i\bv_j =
\ell_i(z_m^t)$.  Hence if $p\in\calM_i(\alpha)$,
$q\in\calM_i(\tilde\alpha)$, then
$pq\in\calM_i(\alpha+\tilde\alpha+\gamma_m)$, where
$\gamma_m=(0,\dots,0,t,0,\dots,0)$,
where the $t$ is in the $m$-th slot. \qed
\end{lemma}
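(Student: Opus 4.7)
The plan is a three-step argument. First I would prove $\bv_i^2 = z_m^t\bv_i + \ell_i(z_m^t)$. By Lemma~\ref{lem:2.4}(1), $\bv_i^2 - z_m^t\bv_i = \sum_{k=1}^{m-1} z_k * h_k + h_0$ with $\deg h_0, \deg h_k \le 2t-1$. The residue $h_0$ has degree below $2t = \deg(z_m^t\bv_i)$ and so is $\prec z_m^t\bv_i$ by the graded nature of $\prec$. For each $z_k * h_k$ the strictly-lower-degree part is $\prec z_m^t\bv_i$ automatically, while the degree-$2t$ component needs attention. Using the isomorphism of Lemma~\ref{lem:3.5}, I would view this component in $(\CC[z_0,z]/(I^h+\langle z_0\rangle))_{2t}$ and project further to $\calS_{2t}=(\CC[z_0,z]/(I^h+\langle z_0,\dots,z_{m-1}\rangle))_{2t}$. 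The projection kills $z_k H_k$ for $k<m$ (since $z_k$ vanishes in $\calS$), while $\{z_m^t v_j\}_{j=1}^d$ is a basis of $\calS_{2t}$ by Lemma~\ref{lem:Stau}. Linear independence then forces the $\calC$-expansion of the degree-$2t$ part of $z_k * h_k$ to have zero coefficient on every $z_m^t\bv_j$; what remain are form-$(*)$ basis elements (all $\prec z_m^t\bv_i$ since $(*)$ precedes $(**)$ at each fixed degree) and form-$(**)$ elements $z^{\alpha''}z_m^l\bv_j$ with $\alpha''\neq 0$, for which the grevlex inequality $z^{\alpha''}z_m^l<_{gr} z_m^t$ (the last nonzero coordinate of the difference is $l-t<0$) yields $z^{\alpha''}z_m^l\bv_j \prec z_m^t\bv_i$.

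Second, for $i\neq j$ the relation $\bv_i\bv_j = \ell_i(z_m^t)$ is immediate: Lemma~\ref{lem:2.4}(2) writes the product as $\sum z_k*q_k+q_0$ with $\deg q_k<2t-1$, placing it in $\CC[V]_{\le 2t-1}$, hence $\prec z_m^t\bv_i$ purely on degree grounds.

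Third, for $p=z^\alpha\bv_i+g\in\calM_i(\alpha)$ and $q=z^{\tilde\alpha}\bv_i+\tilde g\in\calM_i(\tilde\alpha)$, I would expand
\[
p*q = z^{\alpha+\tilde\alpha}*\bv_i^2 + (z^\alpha*\bv_i)*\tilde g + g*(z^{\tilde\alpha}*\bv_i) + g*\tilde g.
\]
Substituting Step~1 gives $z^{\alpha+\tilde\alpha}*\bv_i^2 = z^{\alpha+\tilde\alpha+\gamma_m}\bv_i + z^{\alpha+\tilde\alpha}*\ell_i(z_m^t)$, isolating the desired leading term $z^{\alpha+\tilde\alpha+\gamma_m}\bv_i$. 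The four remaining summands have total degree at most $|\alpha|+|\tilde\alpha|+2t$; strictly-lower-degree contributions are $\prec z^{\alpha+\tilde\alpha+\gamma_m}\bv_i$ by grading. Top-degree contributions require a monotonicity principle: multiplying by a monomial $z^\mu$ sends terms $\prec z^\nu\bv_k$ to terms $\prec z^{\mu+\nu}\bv_k$, which for form-$(**)$ elements follows directly from the identity $z^\mu * (z^{\alpha'}z_m^l\bv_k) = z^{\mu'+\alpha'}z_m^{\mu_m+l}\bv_k$ together with the multiplicative compatibility of grevlex.

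The main obstacle is the monotonicity principle in Step~3 for form-$(*)$ contributions: multiplication by $z_m^{\mu_m}$ can push the constraint $l+|\beta|\le t-1$ defining $\calB$ past threshold, forcing re-expansion of the product via Proposition~\ref{prop:span} into a mixture of $(*)$ and $(**)$ basis elements. One must verify that such re-expansions never introduce a term $\succeq z^{\alpha+\tilde\alpha+\gamma_m}\bv_i$; a clean way to do this is by induction on total degree, using the grading together with the structure equation $\bv_i^2 = z_m^t\bv_i + \ell_i(z_m^t)$ already established in Step~1 to bound the leading behaviour at each degree level.
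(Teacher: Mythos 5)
Your three steps correctly unpack what the paper records with a bare ``\qed,'' and Steps 1 and 2 are sound. Step~1's passage through $\calS$ is exactly the right mechanism: the degree-$2t$ part of $\bv_i\hsta\bv_i - z_m^t\hsta\bv_i$ maps to zero in $\calS_{2t}$; the kernel of $\CC[V]_{=2t}\to\calS_{2t}$ is spanned precisely by the degree-$2t$ basis elements other than $z_m^t\bv_1,\dots,z_m^t\bv_d$; and every such element is $\prec z_m^t\bv_1\preceq z_m^t\bv_i$ because $z_m^t$ is the $<_{gr}$-largest monomial of degree $t$ in $z_1,\dots,z_m$. Step~2 is a degree count.

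The problem is the part of Step~3 that you flag yourself. The monotonicity ``$g\prec z^\mu\bv_i$ implies $z^\nu * g\prec z^{\nu+\mu}\bv_i$'' is indeed fine when $\LT_\prec(g)$ is a form-$(**)$ element, for the reason you give. But for form-$(*)$ elements the ``induction on total degree'' you gesture at does not close the gap, and with $\prec$ built on \emph{grevlex} (as in Definition~\ref{def:basisCprec}) the statement can actually fail once $m\ge 3$. The mechanism: if $\be = z^{\mu''}z_m^lz^\beta$ is a form-$(*)$ element of degree $|\mu|+t$, the constraint $l+|\beta|\le t-1$ forces $|\mu''|\ge|\mu|+1$ but gives no componentwise comparison between $\mu''$ and $\mu'$. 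Re-expanding $z^\nu*\be$ produces $(**)$-elements $z^{\alpha'}z_m^p\bv_j$ with $\alpha'\ge\nu'+\mu''$ componentwise and hence $p\le\nu_m-1 < \nu_m+\mu_m$, i.e.\ with small $z_m$-exponent; but grevlex does not break ties by the $z_m$-exponent first, so nothing forces $z^{\alpha'}z_m^p<_{gr}z^{\nu+\mu}$. Concretely, for the cubic hypersurface $\bV(z_1^3+z_2^3+z_3^3+z_4^3-1)\subseteq\CC^4$ (so $m=3$, $t=2$) one has $z_3^2=\bv_1+\bv_2+\bv_3$ in $\CC[V]$; taking $p=z_1z_3\bv_i+z_2^3z_3\in\calM_i((1,0,1))$ and $q=z_3\bv_i\in\calM_i((0,0,1))$, the product $p*q$ contains the term $z_2^3z_3^2\bv_i$, and in grevlex with $z_1<z_2<z_3$ one has $z_2^3z_3^2>_{gr}z_1z_3^4=z^{\alpha+\tilde\alpha+\gamma_m}$. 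So $\LT_\prec(p*q)\ne z^{\alpha+\tilde\alpha+\gamma_m}\bv_i$ here.

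That this is invisible in the paper's worked examples is an accident of low dimension: when $m\le 2$ the single ``$\alpha'$-coordinate'' inequality $|\mu''|\ge|\mu|+1$ already forces the needed grevlex comparison. The fix that lets your induction close is to compare $(**)$-elements by graded \emph{lex} (equivalently: break ties by the $z_m$-exponent first) on the $z_1,\dots,z_m$ monomial rather than graded reverse lex; then $p<\nu_m+\mu_m$ gives $z^{\alpha'}z_m^p\prec z^{\nu+\mu}$ at equal degree immediately. (Note the paper's displayed definition of $<_{gr}$ is itself inconsistent with the parenthetical ``$z_1<_{gr}z_2<_{gr}\cdots$,'' and with Proposition~\ref{prop:2.2a}, which needs $\LT(g_i)=z_i^{d_i}$.) As written, your Step~3 is not complete, and the missing piece is not a routine induction: it requires either modifying the monomial order underlying $\prec$ or a sharper structural analysis of re-expansion in $\calC$.
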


\begin{corollary} 
\label{cor:4.4}
The function $Y_i$ is weakly submultiplicative with subexponential
growth.  In particular, 
\begin{equation*} 
Y_i(\alpha+\tilde\alpha+\gamma_m)\leq Y_i(\alpha)Y_i(\tilde\alpha),
\quad \alpha,\tilde\alpha\in\ZZ_{\geq 0}^m. 
\end{equation*}
\end{corollary}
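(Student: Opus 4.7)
The plan is to obtain both weak submultiplicativity and subexponential growth of $Y_i$ as essentially immediate consequences of Lemma~\ref{lem:3.3a} together with the definition of $Y_i$ as an infimum of sup-norms.

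For the displayed inequality, I would fix $\alpha,\tilde\alpha\in\ZZ_{\geq 0}^m$ and $\epsilon>0$ and, from the definition of infimum, choose $p\in\calM_i(\alpha)$ and $q\in\calM_i(\tilde\alpha)$ with $\|p\|_K<Y_i(\alpha)+\epsilon$ and $\|q\|_K<Y_i(\tilde\alpha)+\epsilon$. Lemma~\ref{lem:3.3a} then gives $p{*}q\in\calM_i(\alpha+\tilde\alpha+\gamma_m)$. Because multiplication in $\CC[V]$ agrees with polynomial multiplication as functions on $V$, and $K\subseteq V$, one has $\|p{*}q\|_K=\|pq\|_K\leq\|p\|_K\,\|q\|_K$, so
\[
Y_i(\alpha+\tilde\alpha+\gamma_m)\leq\|p{*}q\|_K\leq(Y_i(\alpha)+\epsilon)(Y_i(\tilde\alpha)+\epsilon).
\]
Letting $\epsilon\to 0$ yields the stated inequality, and choosing $\calF:=\{\gamma_m\}$ in Definition~\ref{def:Y} gives weak submultiplicativity.

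For subexponential growth, I would simply exhibit an explicit element of $\calM_i(\alpha)$: the basis element $z^\alpha{*}\bv_i$ lies in $\calM_i(\alpha)$ with trivial lower-order part $g=0$. Setting $R:=\sup\{|z_\nu(\zeta)|:\zeta\in K,\ 1\leq\nu\leq m\}$ (finite by compactness of $K$), together with $r:=\max(R,1)$ and $C:=\|\bv_i\|_K$, one obtains
\[
Y_i(\alpha)\leq\|z^\alpha{*}\bv_i\|_K=\|z^\alpha\bv_i\|_K\leq\|z^\alpha\|_K\,\|\bv_i\|_K\leq C\,r^{|\alpha|},
\]
where the first equality again uses that $*$-products and ordinary polynomial products agree as functions on $V$.

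I do not expect any real obstacle here: essentially all of the substantive algebraic content has already been absorbed into Lemma~\ref{lem:3.3a}. The only subtlety is bookkeeping, namely keeping straight that products in $\calM_i(\cdot)$ are computed in $\CC[V]$ via $*$-reduction, while the sup-norm estimates rely on the fact that $*$-products and ordinary polynomial products give the same function on $V$, and hence on $K$.
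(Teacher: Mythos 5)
Your proposal is correct and follows essentially the same approach as the paper: both derive the displayed inequality directly from Lemma~\ref{lem:3.3a} applied to near-optimal (resp.\ optimal) Chebyshev polynomials, and both obtain subexponential growth by testing $Y_i(\alpha)$ against the explicit element $z^\alpha{*}\bv_i\in\calM_i(\alpha)$ and bounding $\|z^\alpha\|_K$ by a power of the radius of a ball containing $K$. The only cosmetic differences are that the paper takes minimizers directly (the infimum is attained, being a Chebyshev problem over a finite-dimensional affine space) where you use an $\epsilon$-argument, and that the paper takes $r$ with $K\subseteq B(0,r)$ where you take $r=\max(R,1)$.
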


\begin{proof}
Fix indices $\alpha,\tilde\alpha\in\ZZ_{\geq 0}^m$.  Choose
$p\in\calM_i(\alpha)$ such that $\|p\|_K=Y_i(\alpha)$ and
$q\in\calM_i(\tilde\alpha)$ such that $\|q\|_K=Y_i(\tilde\alpha)$.  By
the previous lemma, $pq\in\calM_i(\alpha+\tilde\alpha+\gamma_m)$, so
that $Y_i(\alpha+\tilde\alpha+\gamma_m)\leq
\|pq\|_K\leq\|p\|_K\|q\|_K= Y_i(\alpha)Y_i(\tilde\alpha)$.

Choose $r$ such that $K\subseteq B(0,r)=\{z\in\CC^n : |z|\leq
r\}$. Then $Y_i(\alpha) \leq r^{|\alpha|}\|\bv_i\|_K$, so $Y_i$ has
subexponential growth. 
\end{proof}

As a consequence of the results in the previous section, we have the
following.

\begin{proposition} 
\label{prop:4.5a}
The limit 
\[
T(K,\lambda_i,\theta):=\lim_{\substack{|\alpha|\to\infty \\ \alpha/|\alpha|\to\theta}}Y_i(\alpha)^{\frac{1}{|\alpha|}}  
\]
exists for each $\theta\in\Sigma_m^{\circ}$, and $\theta\mapsto
T(K,\lambda_i,\theta)$ defines a logarithmically homogeneous function
on $\Sigma_m^{\circ}$.  Moreover, we have the convergence
\[
\hskip28pt\frac{1}{h_m(s)}\sum_{|\alpha|=s}\log Y_i(\alpha)^{\frac{1}{|\alpha|}}
\ \longrightarrow
\ \frac{1}{\vol(\Sigma_m)}\int_{\Sigma_m^{\circ}} \log T(K,\lambda_i,\theta)\,
d\theta \quad \hbox{as } s\to\infty.
\]  \qed 
\end{proposition}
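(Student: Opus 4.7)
The plan is to apply the abstract machinery of Section \ref{sec:4} directly to the function $Y_i$, with essentially no additional work. The key input has already been recorded: Corollary \ref{cor:4.4} states that $Y_i : \ZZ_{\geq 0}^m \to \RR_{\geq 0}$ is weakly submultiplicative (with finite set $\calF = \{\gamma_m\}$) and has subexponential growth. This is precisely the hypothesis demanded by every lemma of Section \ref{sec:4}.

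With that verification in hand, the three assertions of the proposition follow in order. First, Lemma \ref{lem:3.5a} applied to $Y_i$ immediately gives the existence of the limit
\[
T(K,\lambda_i,\theta) \ = \ \lim_{\substack{|\alpha|\to\infty \\ \alpha/|\alpha|\to\theta}} Y_i(\alpha)^{1/|\alpha|}
\]
for each $\theta \in \Sigma_m^{\circ}$. Second, Lemma \ref{lem:3.8} shows that the function $\theta \mapsto T(K,\lambda_i,\theta)$ is uniformly bounded and logarithmically convex, and in particular continuous, on $\Sigma_m^{\circ}$. Third, the averaging convergence
\[
\frac{1}{h_m(s)}\sum_{|\alpha|=s}\log Y_i(\alpha)^{\frac{1}{|\alpha|}} \ \longrightarrow \ \frac{1}{\vol(\Sigma_m)}\int_{\Sigma_m^{\circ}} \log T(K,\lambda_i,\theta)\, d\theta
\]
is exactly the content of equation \eqref{eqn:lem4.8} (the final lemma of Section \ref{sec:4}) applied with $Y = Y_i$ and $T = T(K,\lambda_i,\cdot)$.

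The proposition is therefore just a triple citation, and there is no substantive obstacle. The one conceptual point worth flagging is that the entire reason Section \ref{sec:4} was developed in the slightly weaker setting of \emph{weak} submultiplicativity, rather than ordinary submultiplicativity, was precisely to accommodate the shift by $\gamma_m = (0,\dots,0,t,0,\dots,0)$ that appears in Lemma \ref{lem:3.3a}. That shift is an unavoidable algebraic consequence of the relations $\bv_i^2 = z_m^t \bv_i + \ell_i(z_m^t)$ and $\bv_i\bv_j = \ell_i(z_m^t)$ coming from the structure of the basis $\calC$; once the abstract framework tolerates it, nothing else needs to be checked here.
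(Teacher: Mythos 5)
Your proposal is correct and is exactly what the paper intends: the proposition carries a $\qed$ with no separate proof body precisely because it is a direct triple citation of Lemma \ref{lem:3.5a}, Lemma \ref{lem:3.8}, and the final lemma of Section \ref{sec:4}, applied to the weakly submultiplicative function $Y_i$ furnished by Corollary \ref{cor:4.4}. Your remark that the shift $\gamma_m$ is the reason the abstract framework was formulated for weak (rather than ordinary) submultiplicativity is an accurate reading of the paper's design, and your reading of ``logarithmically homogeneous'' as logarithmic convexity (what Lemma \ref{lem:3.8} actually delivers) correctly resolves what is evidently a slip in the statement.
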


\begin{definition}\rm
We call $T(K,\lambda_i,\theta)$ the \emph{directional Chebyshev
constant of $K$ associated to $\lambda_i$ and $\theta$}.   

We call
\[
T(K,\lambda_i):= \exp\left( \frac{1}{\vol(\Sigma_m)}
\int_{\Sigma_m^{\circ}} \log T(K,\lambda_i,\theta)\, d\theta  \right)
\]
the \emph{principal Chebyshev constant of $K$ associated to $\lambda_i$}.

As in \eqref{eqn:3.3c}, we also define $\displaystyle
T^-(K,\lambda_i,b):=\liminf_{|\alpha|\to\infty,\,
  \frac{\alpha}{|\alpha|}\to b} Y(\alpha)^{\frac{1}{|\alpha|}}$  for
$b\in\partial\Sigma_m$. 
\end{definition}

In the proof of the main theorem on transfinite diameter, we will need
to account for polynomials whose leading terms in $\calC$ are of the
form $(*)$.  For $\alpha'\in\ZZ_{\geq 0}^{m-1}$ define 
\[
\tilde\calM(\alpha') := \{ p\in\CC[V]:
\LT_{\prec}(p)=z^{\alpha'}z_m^lz^{\beta} \hbox{ with }  
z_m^lz^{\beta}\in\calB \}. 
\]  
Recall that this means that $l+|\beta|<t$.  Set $\tilde Y(\alpha'):=
\inf\bigl\{\|p\|_K :\ p\in\tilde\calM(\alpha')\bigr\}$.  If
$K\subseteq B(0,r)$ it is easy to see that 
\begin{equation} 
\label{eqn:5.2aa} \tilde Y(\alpha')\leq r^{|\alpha'|}. 
\end{equation}

Also, set $\tilde
T(\alpha'):= \inf\bigl\{\|p\|_K^{\frac{1}{\deg
    p}}:\ p\in\tilde\calM(\alpha')\bigr\}$ and define the function
\begin{equation*}
\tilde T^-(K,\theta')  :=  \liminf_{{|\alpha'|\to\infty, \frac{\alpha'}{|\alpha'|}\to\theta'}  } \tilde T(\alpha') 
\end{equation*}
on $\Sigma_{m-1} := \{\theta'=(\theta_1,\dots,\theta_{m-1}) \in
\RR^{m-1} : \sum_k\theta_k=1\}$.  
 We want to get a lower estimate
for this quantity.  First we make the following observation.  Since
the monomial $z_m^{t-|\beta|}z^{\beta}$ is not in $\calB$ it must be
expressed in $\CC[V]$ with respect to the basis $\calC$ as
\begin{equation} 
\label{eqn:3.2} 
z_m^{t-|\beta|}z^{\beta}  =  \sum_{i=1}^d C_{\beta i}\bv_i + q(z)
\end{equation}
where $\deg q\leq t$, $\LT_{\prec}(q)\prec \bv_1$, and not all
$C_{\beta i}$ are zero.

\begin{lemma} 
\label{lem:4.7a}
Suppose $C_{l\beta i}\neq 0$ for some $i\in\{1,\dots,d\}$.  Then for
each $\theta'\in\Sigma_{m-1}$ we have
\begin{equation} 
\label{eqn:3.3a}
T^-(K,\lambda_i,\theta) \leq \tilde T^-(K,\theta'), 
\end{equation}
where $\theta=(\theta',0) =
(\theta_1,\dots,\theta_{m-1},0)\in\partial\Sigma_m$.
\end{lemma}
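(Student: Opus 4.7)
The plan is to transfer a near-optimal polynomial from $\tilde\calM(\alpha')$ into $\calM_i(\alpha)$ by multiplying by $z_m^{t-l-|\beta|}$ and $\bv_i$, then pass to a limit. First, using the definition of $\tilde T^-(K,\theta')$, I would choose a sequence $\alpha'_{(j)}\in\ZZ_{\geq 0}^{m-1}$ with $|\alpha'_{(j)}|\to\infty$, $\alpha'_{(j)}/|\alpha'_{(j)}|\to\theta'$, and polynomials $p_j\in\tilde\calM(\alpha'_{(j)})$ with $\|p_j\|_K^{1/\deg p_j}\to \tilde T^-(K,\theta')$. Each $p_j$ has leading term $z^{\alpha'_{(j)}}z_m^{l_j}z^{\beta_j}$ with $z_m^{l_j}z^{\beta_j}\in\calB$; since $\calB$ is finite, pass to a subsequence on which $(l_j,\beta_j)=(l,\beta)$ is constant. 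By the hypothesis, $C_{\beta i}\neq 0$ in the expansion \eqref{eqn:3.2}.

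Next, define $q_j := C_{\beta i}^{-1}\, p_j\, z_m^{t-l-|\beta|}\, \bv_i\in\CC[V]$ and set $\alpha_{(j)}:=(\alpha'_{(j)},t,0,\ldots,0)\in\ZZ_{\geq 0}^m$. The crux is a leading-term analysis showing $q_j\in\calM_i(\alpha_{(j)})$. The top contribution to $p_j\cdot z_m^{t-l-|\beta|}$ comes from $z^{\alpha'_{(j)}}z_m^{t-|\beta|}z^\beta$, which by \eqref{eqn:3.2} expands as $\sum_k C_{\beta k}z^{\alpha'_{(j)}}\bv_k + z^{\alpha'_{(j)}}q$. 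Multiplying by $\bv_i$ and using Lemma \ref{lem:3.3a}, the $k=i$ summand yields $C_{\beta i}z^{\alpha'_{(j)}}(z_m^t\bv_i + \ell_i(z_m^t))$, whose leading term in $\prec$ is $C_{\beta i}z^{\alpha'_{(j)}}z_m^t\bv_i$; for $k\neq i$, $\bv_k\bv_i = \ell_i(z_m^t)\prec z_m^t\bv_i$ (the essential mechanism: even when $k>i$ so that $\bv_k\succ \bv_i$ as basis elements, the extra multiplication by $\bv_i$ collapses the product). The $z^{\alpha'_{(j)}}q\bv_i$ piece and the contributions of the sub-leading terms of $p_j$ produce terms $\prec z^{\alpha'_{(j)}}z_m^t\bv_i$, by a combination of degree counting and the $(*)\prec(**)$ structure of $\calC$. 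Dividing by $C_{\beta i}$ makes the top coefficient $1$, so $q_j\in\calM_i(\alpha_{(j)})$.

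Finally, I would estimate norms and pass to the limit. With $M:=|C_{\beta i}|^{-1}\|z_m\|_K^{t-l-|\beta|}\|\bv_i\|_K$,
\[
Y_i(\alpha_{(j)})\leq \|q_j\|_K\leq M\|p_j\|_K.
\]
Since $|\alpha_{(j)}|=|\alpha'_{(j)}|+t\to\infty$, $\alpha_{(j)}/|\alpha_{(j)}|\to(\theta',0)=\theta$, and $\deg p_j/|\alpha_{(j)}|\to 1$, writing $\|p_j\|_K^{1/|\alpha_{(j)}|}=(\|p_j\|_K^{1/\deg p_j})^{\deg p_j/|\alpha_{(j)}|}$ and letting $j\to\infty$ yields
\[
T^-(K,\lambda_i,\theta)\leq \liminf_{j\to\infty}Y_i(\alpha_{(j)})^{1/|\alpha_{(j)}|}\leq \tilde T^-(K,\theta'),
\]
as required. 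The main obstacle is the leading-term verification of step two: carefully tracking the $\bv_i\bv_k$ products via Lemma \ref{lem:3.3a} and checking that the $z^{\alpha'_{(j)}}q\bv_i$ error and the sub-leading contributions of $p_j$ stay strictly below $z^{\alpha'_{(j)}}z_m^t\bv_i$ in $\prec$; this is what ensures that the construction genuinely lands in $\calM_i(\alpha_{(j)})$ regardless of which index $k$ dominates in \eqref{eqn:3.2}.
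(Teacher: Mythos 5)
Your proposal is correct and follows essentially the same route as the paper: choose near-optimal $p_j\in\tilde\calM(\alpha'_{(j)})$, pass to a subsequence so that $(l,\beta)$ is constant, multiply by $C_{\beta i}^{-1}z_m^{t-l-|\beta|}\bv_i$, use \eqref{eqn:3.2} and Lemma~\ref{lem:3.3a} to verify the product lies in $\calM_i(\alpha_{(j)})$, and take $\liminf$. The only cosmetic differences are your diagonal choice of $p_j$ in place of the paper's explicit $\epsilon$, and your $m$-th slot of $\alpha_{(j)}$ being $t$ where the paper writes $t-|\beta|$ (your value is the one actually produced by the leading-term computation; both give $\alpha_{(j)}/|\alpha_{(j)}|\to\theta$, so this does not affect the conclusion).
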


\begin{proof}
Fix $\theta'\in\Sigma_{m-1}$ and let $\epsilon>0$.  Let $\{\alpha'_{(j)}\}$ be a
sequence in $\ZZ_{\geq 0}^{m-1}$ with $|\alpha'_{(j)}|\to\infty$,
$\frac{\alpha'_{(j)}}{|\alpha'_{(j)}|}\to\theta'$, and $\tilde
T(\alpha'_{(j)})\to\tilde T^-(K,\theta')$ as $j\to\infty$.

Next, choose a sequence of polynomials $\{p_j\}\subseteq\CC[V]$ such
that $p_j\in\tilde\calM(\alpha'_{(j)})$ and $\|p\|_K^{{1}/{\deg
    p_j}}\leq\tilde T(\alpha'_{(j)})+\epsilon$. Since $\calB$ is
finite, we can assume, by passing perhaps to a subsequence, that
$\LT_{\prec}(p_j)=z^{\alpha'_{(j)}}z_m^lz^{\beta}$ where $l$ and
$\beta$ are the same for all $j$.

Let $Q:=C_{\beta i}^{-1}z_m^{t-l-|\beta|}\bv_i$ and define
$\{P_j\}\subseteq\CC[V]$ by $P_j:=Qp_j$ for each $j$.  Then a
calculation using equation \eqref{eqn:3.2} and Lemma \ref{lem:3.3a}
shows that $P_j\in\calM_i(\alpha_{(j)})$ where
$\alpha_{(j)}=(\alpha'_{(j)},t-|\beta|)$.  Clearly
$\frac{\alpha_{(j)}}{|\alpha_{(j)}|}\to\theta$ as $j\to\infty$ since
$l$ and $|\beta|$ are bounded from above by $t$.  Now
\[
Y_i(\alpha_{(j)})^{\frac{1}{|\alpha_{(j)}|}}\leq
\|Q\|_K^{\frac{1}{|\alpha_{(j)}|}}\|p_j\|_K^{\frac{1}{|\alpha_{(j)}|}}
\leq \|Q\|_K^{\frac{1}{|\alpha_{(j)}|}}\Bigl( \tilde T(\alpha'_{(j)})
+ \epsilon   \Bigr)^{\frac{\deg p_j}{|\alpha_{(j)}|}}.
\]
We take the lim inf as $j\to\infty$.  We have
$T^-(K,\lambda_i,\theta)\leq \tilde T^-(K,\theta')+\epsilon$ since
$\frac{\deg p_j}{|\alpha_{(j)}|}\to 1$, and \eqref{eqn:3.3a} follows
since $\epsilon$ was arbitrary. \end{proof}
  
\begin{corollary}
We have 
\[
\liminf_{|\alpha'|\to\infty}\tilde Y(\alpha')^{\frac{1}{|\alpha'|}}
\ = \  \liminf_{|\alpha'|\to\infty} \tilde T(\alpha) \ \geq
\ \min\bigl\{T^-(K,\lambda_i,\theta) :  i\in\{1,\dots,d\},\,
\theta\in\partial\Sigma_m\bigr\}. 
\]
\qed
\end{corollary}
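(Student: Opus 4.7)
My plan is to establish the equality and the inequality separately.

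For the equality, the key observation is that every $p \in \tilde\calM(\alpha')$ has leading term $z^{\alpha'}z_m^l z^\beta$ with $l + |\beta| \le t-1$, so $|\alpha'| \le \deg p \le |\alpha'| + t - 1$, and hence $\deg p/|\alpha'| \to 1$ uniformly over $p \in \tilde\calM(\alpha')$ as $|\alpha'| \to \infty$. For the direction $\liminf \tilde Y(\alpha')^{1/|\alpha'|} \le \liminf \tilde T(\alpha')$, I would take a sequence $\alpha'_{(j)}$ with $|\alpha'_{(j)}|\to\infty$ realizing $\liminf \tilde T(\alpha')$ and choose $p_j \in \tilde\calM(\alpha'_{(j)})$ with $\|p_j\|_K^{1/\deg p_j} \le \tilde T(\alpha'_{(j)}) + 1/j$. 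Then
\[
\tilde Y(\alpha'_{(j)})^{1/|\alpha'_{(j)}|} \le \|p_j\|_K^{1/|\alpha'_{(j)}|} = \bigl(\|p_j\|_K^{1/\deg p_j}\bigr)^{\deg p_j/|\alpha'_{(j)}|} \le \bigl(\tilde T(\alpha'_{(j)}) + 1/j\bigr)^{\deg p_j/|\alpha'_{(j)}|},
\]
and taking the liminf will give the bound, since the base tends to $\liminf \tilde T \in [0,r]$ (boundedness coming from \eqref{eqn:5.2aa}) and the exponent tends to $1$. The reverse inequality is symmetric: start from a sequence realizing $\liminf \tilde Y(\alpha')^{1/|\alpha'|}$, take $p_j$ near-optimal for $\tilde Y(\alpha'_{(j)})$, and apply $\tilde T(\alpha'_{(j)}) \le (\|p_j\|_K^{1/|\alpha'_{(j)}|})^{|\alpha'_{(j)}|/\deg p_j}$.

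For the inequality, I would take a sequence $\alpha'_{(j)}$ with $|\alpha'_{(j)}|\to\infty$ and $\tilde T(\alpha'_{(j)})\to L := \liminf_{|\alpha'|\to\infty}\tilde T(\alpha')$. By compactness of $\Sigma_{m-1}$, pass to a subsequence along which $\alpha'_{(j)}/|\alpha'_{(j)}| \to \theta'$ for some $\theta' \in \Sigma_{m-1}$. The definition of $\tilde T^-$ then forces $L \ge \tilde T^-(K, \theta')$. Lemma \ref{lem:4.7a} will supply an index $i \in \{1,\dots,d\}$ such that $T^-(K, \lambda_i, \theta) \le \tilde T^-(K, \theta')$ with $\theta = (\theta', 0) \in \partial\Sigma_m$; such an $i$ is available because the proof of Lemma \ref{lem:4.7a} passes to a further subsequence on which the leading-term data $(l,\beta) \in \calB$ is constant (possible since $\calB$ is finite), and \eqref{eqn:3.2} guarantees that not all coefficients $C_{\beta i}$ vanish for that $(l,\beta)$. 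Chaining the inequalities then yields
\[
\liminf_{|\alpha'|\to\infty} \tilde T(\alpha') \ge \tilde T^-(K, \theta') \ge T^-(K, \lambda_i, \theta) \ge \min\bigl\{T^-(K, \lambda_j, \phi): j \in \{1,\dots,d\},\, \phi \in \partial\Sigma_m\bigr\}.
\]

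The main (and fairly minor) subtlety lies in the equality: one has to check that $(L + o(1))^{1+o(1)} \to L$ without a priori knowing whether $L$ is $0$, $\in (0,1)$, or $> 1$. Since the exponent ratio $\deg p/|\alpha'|$ is confined to $[1, 1 + (t-1)/|\alpha'|]$ and the base stays in $[0,r]$, continuity of $x \mapsto x^y$ at $y=1$ handles all cases uniformly. The inequality part is then a direct chase through the definitions, combining Lemma \ref{lem:4.7a} with the compactness of $\Sigma_{m-1}$.
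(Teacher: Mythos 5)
Your proof is correct and fills in the details of a corollary the paper presents without argument (the paper's $\qed$ immediately follows the statement, signaling it is regarded as a direct consequence of Lemma~\ref{lem:4.7a} and the definitions). Both halves are handled the way one would expect: the equality exploits $|\alpha'|\le\deg p\le|\alpha'|+t-1$ so that the two exponents $1/|\alpha'|$ and $1/\deg p$ differ by a factor tending to $1$ (your check that this works even when the limit is $0$, by confining the exponent ratio and using monotonicity of $x\mapsto x^y$, is the right way to close that small gap), and the inequality chains the definitions through a compactness argument on $\Sigma_{m-1}$ and Lemma~\ref{lem:4.7a}, whose hypothesis on the coefficients $C_{\beta i}$ is always satisfiable by \eqref{eqn:3.2}. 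This matches the intended reading of the paper.
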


\section{The transfinite diameter} 
\label{sec:6}

Recall that $\{\be_j\}_{j=1}^{\infty}$ denotes the enumeration of the
basis $\calC$ according to the ordering $\prec$.  For a finite set
$\{\zeta_1,\dots,\zeta_s\}\subseteq V$, define
\begin{equation}
\label{eqn:6}
\Van_{\calC}(\zeta_1,\dots,\zeta_s) \ := \  \det\begin{pmatrix}
1 & 1 & \cdots & 1 \\
\be_2(\zeta_1) & \be_2(\zeta_2) & \cdots & \be_2(\zeta_s) \\
\vdots & \vdots & \ddots & \vdots \\
\be_s(\zeta_1) & \be_s(\zeta_2) & \cdots & \be_s(\zeta_s)
\end{pmatrix}.
\end{equation}
   \def\bff{\mathbf{f}}
   
 As in the previous section, fix a compact set $K\subseteq V$.  We have
 $K\subseteq B(0,r)=\{|z|<r\}$ for some $r>0$.

\begin{notation}\label{not:l_sm_s} \rm For a positive integer $s$,  
\[
V_s:= \sup\{|\Van_{\calC}(\zeta_1,\dots,\zeta_s)| :
\{\zeta_1,\dots,\zeta_s\}\subseteq K \}.
\]
Also, given any positive integer $s$, let $h_s$ denote the dimension of $\CC[V]_{=s}$, let $m_s:= \sum_{\nu=0}^s
h_{\nu}$ denote the dimension of $\CC[V]_{\leq s}$, and let $l_s :=
\sum_{\nu=0}^s \nu h_{\nu}$ denote the sum of the degrees of the basis
elements $\calC\cap\CC[V]_{\leq s}$.
\end{notation}

We now state our main theorem.

\begin{theorem} 
\label{thm:5.2a}
The limit $\displaystyle d(K)=\lim_{s\to\infty} V_{m_s}^{1/l_s}$
exists and we have the formula
\[
d(K) =  \biggl(\prod_{i=1}^d T(K,\lambda_i)\biggr)^{1/d}.
\]
\end{theorem}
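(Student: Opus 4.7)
The plan is to follow Zaharjuta's classical scheme, adapted to the ordered basis $\calC$. For each basis element $\be_k$ set $\tau_k := \inf\{\|p\|_K : p \in \CC[V],\ \LT_\prec(p) = \be_k\}$: when $\be_k = z^\alpha z_m^l{*}\bv_i$ is of form $(**)$, $\tau_k = Y_i(\alpha_1,\dots,\alpha_{m-1},l)$ by definition, and when $\be_k$ is of form $(*)$, $\tau_k = \tilde Y(\alpha_1,\dots,\alpha_{m-1})$. Since the row operation replacing row $k$ by any polynomial $P_k$ with $\LT_\prec(P_k) = \be_k$ leaves the determinant unchanged,
\begin{equation*}
\Van_\calC(\zeta_1,\dots,\zeta_{m_s}) = \det\bigl(P_k(\zeta_i)\bigr)_{i,k=1}^{m_s}.
\end{equation*}
Choosing each $P_k$ nearly optimal for $\tau_k$ and applying Hadamard's inequality yields the upper bound $V_{m_s} \leq m_s^{m_s/2}\prod_{k=1}^{m_s}\tau_k$; picking Fekete points and running the standard Lagrange-interpolation/column-swap argument gives the matching reverse bound $\prod_{k=1}^{m_s}\tau_k \leq A^{m_s} V_{m_s}$ for a constant $A = A(K)$. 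Since $m_s = O(s^m)$ while $l_s = \Theta(s^{m+1})$, these subexponential prefactors disappear on extracting the $1/l_s$-th root, reducing the theorem to showing that $\bigl(\prod_{k=1}^{m_s}\tau_k\bigr)^{1/l_s}$ converges to $\bigl(\prod_i T(K,\lambda_i)\bigr)^{1/d}$.

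I would next partition $\calC$ into form-$(*)$ and form-$(**)$ elements. Because $\calB$ is finite and the $z^\alpha$-part of a form-$(*)$ element lies in $\CC[z_1,\dots,z_{m-1}]$, there are only $O(s^{m-1})$ form-$(*)$ basis elements of degree $\leq s$; combined with \eqref{eqn:5.2aa} and the lower bound from the end of Section \ref{sec:cheby}, their total log-contribution is $O(s^m) = o(l_s)$ and disappears in the limit. For each $i = 1,\dots,d$ the form-$(**)$ factors attached to $\lambda_i$ and of degree $\leq s$ are indexed by $\beta \in \ZZ_{\geq 0}^m$ with $|\beta| \leq s-t$, and Proposition \ref{prop:4.5a} combined with a Cesaro summation over $\nu = |\beta|$ gives
\begin{equation*}
\sum_{|\beta| \leq s-t} \log Y_i(\beta) \ \sim\ \log T(K,\lambda_i)\cdot \sum_{\nu=0}^{s-t} \nu\, h_m(\nu).
\end{equation*}
An Abel summation based on $m_s = \frac{d}{m!}s^m + O(s^{m-1})$ gives $l_s \sim d\sum_{\nu=0}^s\nu\,h_m(\nu)$. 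Summing over $i$ therefore yields $\frac{1}{l_s}\sum_k \log\tau_k \to \frac{1}{d}\sum_i \log T(K,\lambda_i)$, and exponentiating produces the claimed formula.

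The main obstacle will be the reverse Vandermonde-to-Chebyshev bound sketched in the first paragraph. For Fekete points $\zeta_1^*,\dots,\zeta_{m_s}^*$, the polynomial $z \mapsto \Van_\calC(\zeta_1^*,\dots,\zeta_{k-1}^*,z,\zeta_{k+1}^*,\dots,\zeta_{m_s}^*)$ has sup norm exactly $V_{m_s}$ on $K$, but its $\prec$-leading term sits on $\be_{m_s}$ rather than on $\be_k$. Extracting the desired bound on $\tau_k$ requires an inductive column-swap that carefully tracks how the products $\bv_i\bv_j$ of Lemma \ref{lem:2.4} interact with $\prec$; when the natural leading term falls into the form-$(*)$ sector one invokes Lemma \ref{lem:4.7a} to translate the resulting boundary Chebyshev constants $T^-(K,\lambda_i,b)$ back into the interior ones $T(K,\lambda_i)$. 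None of this is new beyond Zaharjuta's original manipulation, and the weakly submultiplicative framework of Section \ref{sec:4} (in particular Corollary \ref{cor:4.4}) absorbs the $O(1)$ index shifts that arise from the correction term $\gamma_m$ in Lemma \ref{lem:3.3a}.
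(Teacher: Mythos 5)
Your overall strategy matches the paper's: sandwich $V_{m_s}$ between products of the quantities $Y_i(\alpha)$ and $\tilde Y(\alpha')$ up to prefactors that vanish on taking $l_s$-th roots, show the form-$(*)$ contribution is $o(l_s)$ in the logarithm, and then feed Proposition~\ref{prop:4.5a} into a weighted Ces\`aro average to get $\tfrac1d\sum_i\log T(K,\lambda_i)$. Your Hadamard-inequality upper bound $V_{m_s}\le m_s^{m_s/2}\prod_k\tau_k$ is a perfectly serviceable alternative to the paper's telescoping of the ratios $V_k/V_{k-1}$ from Lemma~\ref{lem:5.3a}; both prefactors are subexponential in $l_s$.

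The ``main obstacle'' you raise at the end, however, is a misconception rather than a real difficulty. You consider, for Fekete points $\zeta_1^*,\dots,\zeta_{m_s}^*$, the $m_s\times m_s$ determinant $\Van_\calC(\zeta_1^*,\dots,\zeta_{k-1}^*,z,\zeta_{k+1}^*,\dots,\zeta_{m_s}^*)$ and correctly observe that its $\prec$-leading term is $\be_{m_s}$. But that is simply the wrong polynomial. The classical argument (and the one the paper runs in Lemma~\ref{lem:5.3a}) takes a Fekete configuration of \emph{size $k-1$} attaining $V_{k-1}$, forms the $k\times k$ determinant $\Van_\calC(\zeta_1^*,\dots,\zeta_{k-1}^*,z)$, and expands along the last column: the coefficient of $\be_k(z)$ is $\Van_\calC(\zeta_1^*,\dots,\zeta_{k-1}^*)=V_{k-1}$ and all other cofactors multiply $\be_j(z)$ with $j<k$. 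Normalizing by $V_{k-1}$ gives a polynomial with $\prec$-leading term exactly $\be_k$ and sup norm $\le V_k/V_{k-1}$, so $\tau_k\le V_k/V_{k-1}$ and the lower bound telescopes to $\prod_k\tau_k\le V_{m_s}$ with \emph{no} constant $A^{m_s}$ and no column-swap, regardless of whether $\be_k$ is of form $(*)$ or $(**)$. The Lemma~\ref{lem:4.7a}/Corollary~\ref{cor:3.6a} machinery you cite is needed, but for a different reason than you say: it supplies a uniform lower bound for the form-$(*)$ Chebyshev quantities $\tilde T_s$ (so their contribution to the \emph{lower} estimate of $V_{m_s}^{1/l_s}$ doesn't collapse), not to ``translate leading terms back'' in any Vandermonde manipulation.

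One further point you should make explicit: Proposition~\ref{prop:4.5a} produces $\log T(K,\lambda_i)$ which may equal $-\infty$, and the second Ces\`aro step (averaging with weights $\nu h_m(\nu)$) requires a small case distinction. If some $T(K,\lambda_i)=0$ the claimed formula gives $d(K)=0$, and only the $\limsup$ half of the sandwich is needed; if all $T(K,\lambda_i)>0$ you need Corollary~\ref{cor:3.6a} to rule out $T^-=0$ on $\partial\Sigma_m$ so the form-$(*)$ terms are bounded below. The paper handles this by splitting into exactly these two cases, and your proof should too.
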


To prove the theorem we will need some lemmas.  Recall that $\calB$ is
the collection of monomials given by \eqref{eqn:calB}.


\begin{lemma}
\label{lem:5.3a}
Let $s$ be a positive integer. If $\be_s=z^{\alpha}\bv_i$ for some
integer $i\in\{1,\dots,d\}$ then 
\begin{equation}
\label{eqn:4.2a}
Y_i(\alpha) \leq \frac{V_s}{V_{s-1}} \leq sY_i(\alpha).
\end{equation}
If $\be_s=z^{\alpha'}z_m^lz^{\beta}$ with
$z_m^lz^{\beta}\in\calB\cap\calC$, then 
\begin{equation}
\label{eqn:4.3a}
\tilde Y(\alpha') \leq \frac{V_s}{V_{s-1}} \leq s\tilde Y_i(\alpha').
\end{equation}
\end{lemma}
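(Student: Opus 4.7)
The plan is to prove both chains of inequalities by the standard Zaharjuta-type argument: interpret $V_s/V_{s-1}$ as the $K$-sup norm of an appropriate ``Lagrange-like'' polynomial whose leading term in $\calC$ is $\be_s$. For concreteness I describe the first case $\be_s = z^\alpha \bv_i$; the second case for $\be_s = z^{\alpha'} z_m^l z^\beta \in \calB \cap \calC$ proceeds by exactly the same arguments with $\tilde\calM(\alpha')$ replacing $\calM_i(\alpha)$.

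For the upper bound $V_s/V_{s-1} \leq s Y_i(\alpha)$, first use compactness of $K^s$ and continuity of the determinant to pick Fekete points $\zeta_1,\dots,\zeta_s \in K$ with $|\Van_\calC(\zeta_1,\dots,\zeta_s)| = V_s$. Given any $p \in \calM_i(\alpha)$, write $p = \be_s + \sum_{j<s} c_j \be_j$ and replace the $s$-th row of the matrix in \eqref{eqn:6} by $(p(\zeta_1),\dots,p(\zeta_s))$. The determinant is unchanged, since we have merely added to row $s$ a linear combination of rows $1,\dots,s-1$. Cofactor expansion along the last row yields
\[
V_s = |\Van_\calC(\zeta_1,\dots,\zeta_s)| \leq \sum_{k=1}^s |p(\zeta_k)|\, |M_k|,
\]
where $M_k$ is the $(s-1)\times(s-1)$ Vandermonde minor on the points $\zeta_1,\dots,\widehat{\zeta_k},\dots,\zeta_s \in K$; hence $|M_k| \leq V_{s-1}$. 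Since $|p(\zeta_k)| \leq \|p\|_K$ and there are $s$ terms, the estimate $V_s \leq s \|p\|_K V_{s-1}$ follows, and taking the infimum over $p \in \calM_i(\alpha)$ gives the claimed bound.

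For the lower bound $Y_i(\alpha) \leq V_s/V_{s-1}$, pick $\zeta_1,\dots,\zeta_{s-1} \in K$ realizing $V_{s-1}$ (again by compactness) and define
\[
p(z) := \frac{\Van_\calC(\zeta_1,\dots,\zeta_{s-1},z)}{\Van_\calC(\zeta_1,\dots,\zeta_{s-1})}.
\]
Cofactor expansion of the numerator along its last column expresses $p$ as a $\CC$-linear combination of $\be_1(z),\dots,\be_s(z)$, and the coefficient of $\be_s(z)$ is precisely $\Van_\calC(\zeta_1,\dots,\zeta_{s-1})/\Van_\calC(\zeta_1,\dots,\zeta_{s-1}) = 1$. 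Thus $p \in \calM_i(\alpha)$. For any $z \in K$ the numerator is the Vandermonde on $s$ points of $K$, so is bounded in absolute value by $V_s$, giving $\|p\|_K \leq V_s/V_{s-1}$. Hence $Y_i(\alpha) \leq \|p\|_K \leq V_s/V_{s-1}$.

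The only genuine care points are (i) noting that the sups defining $V_s$ and $V_{s-1}$ are attained so the ratio is well-defined, which follows from compactness (and we may tacitly assume $V_{s-1} > 0$, which is the nontrivial case), and (ii) verifying that the polynomial $p$ constructed in the lower-bound step really has $\LT_\prec(p) = \be_s$. This latter point is the one requiring attention: cofactor expansion shows $p$ is a $\CC$-combination of $\be_1,\dots,\be_s$ with $\be_s$-coefficient equal to $1$, and by the ordering on $\calC$ this is exactly the statement $p \in \calM_i(\alpha)$ (or $p \in \tilde\calM(\alpha')$ in the second case). Everything else is formal bookkeeping identical to Zaharjuta's original argument.
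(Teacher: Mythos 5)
Your proof is correct and follows essentially the same argument as the paper: the lower bound uses the Lagrange-type polynomial $\Van_\calC(\zeta_1,\dots,\zeta_{s-1},z)/\Van_\calC(\zeta_1,\dots,\zeta_{s-1})$ at optimal points for $V_{s-1}$, and the upper bound replaces the last row of the extremal Vandermonde matrix for $V_s$ by the values of a polynomial in $\calM_i(\alpha)$ and expands by cofactors. The only cosmetic difference is that the paper fixes an extremal $t$ with $\|t\|_K = Y_i(\alpha)$, whereas you bound for arbitrary $p \in \calM_i(\alpha)$ and take the infimum at the end; both routes are immediate and equivalent.
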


\begin{proof}
Choose points $\zeta_1,\dots,\zeta_{s-1}$ in $K$ such that
$\Van_{\calC}(\zeta_1,\dots,\zeta_{s-1})=V_{i-1}$.  It is easy to see
that the polynomial
$P(z) :=
\frac{\Van_{\calC}(\zeta_1,\dots,\zeta_{s-1},z)}
{\Van_{\calC}(\zeta_1,\dots,\zeta_{s-1})}$ 
is in $\calM(\alpha)$ by expanding the determinant, and hence
\[
Y_i(\alpha) \leq \|P\|_K \leq \frac{V_s}{V_{s-1}},
\]
which gives the first inequality of \eqref{eqn:4.2a}.

Now choose points $\zeta_1,\dots,\zeta_s$ in $K$ such that
$\Van_{\calC}(\zeta_1,\dots,\zeta_{s})=V_i$ and let $t(z)=\be_s +
\sum_{\nu<s}c_{\nu}\be_{\nu}$ be a polynomial in $\calM(\alpha)$ such
that $\|t\|_K=Y_i(\alpha)$.  Then by properties of determinants,  
\begin{eqnarray*}
V_i &=& \left|\det\begin{pmatrix}
1 & 1 & \cdots & 1 \\
\be_2(\zeta_1) & \be_2(\zeta_2) & \cdots & \be_2(\zeta_s) \\
\vdots & \vdots & \ddots & \vdots \\
\be_{s-1}(\zeta_1) & \be_{s-1}(\zeta_2) & \cdots & \be_{s-1}(\zeta_s) \\
t(\zeta_1) & t(\zeta_2) & \cdots & t(\zeta_s)
\end{pmatrix}\right|  \\
&\leq& \sum_{\nu=1}^s |t(\zeta_{\nu})| 
|V(\zeta_1,\dots,\hat\zeta_{\nu},\dots,\zeta_s)| \ \leq
\ \sum_{\nu=1}^s Y_i(\alpha)V_{s-1}  \ = \ sY_i(\alpha)V_{s-1}, 
\end{eqnarray*}
where we expand along the bottom row.  This gives the second
inequality of \eqref{eqn:4.2a}.

The proof of \eqref{eqn:4.3a} is similar, so we omit it.
\end{proof}

We need to keep track of exponents.  Let $t$ be as in Section \ref{sec:3} (see the paragraph following Corollary \ref{cor:3.3}). Fix an integer
$s>t$.  For an element $z^{\alpha}\bv_i$ there are $d$ choices for $i$
and $h_m(s-t)=\binom{s-t+m-1}{m-1}=\frac{(s-t+m-1)!}{(s-t)!(m-1)!}$
choices for $\alpha$ when $|\alpha|=s-t$.  Hence the number of basis
elements of degree $s$ of the form $(**)$ is $dh_m(s-t)$.         

Let $a_s:=h_s-dh_m(s-t)$ be the number of remaining basis elements, of
the form $(*)$, i.e., $z^{\alpha'}az_m^lz^{\beta}$ with
$\alpha'\in\ZZ_{\geq 0}^{m-1}$ and $z_m^lz^{\beta}\in\calB$.  We then
have the estimate $a_s\leq|\calB|\binom{s+m-2}{m-2}$, where $|\calB|$
denotes the size of the set $\calB$.  Hence   
\begin{equation}
\label{eqn:5.4a}
\frac{a_s}{h_s} \ \leq
\  \frac{|\calB|\binom{s+m-2}{m-2}}{d\binom{s-t+m-1}{m-1}}\ \longrightarrow
\ 0 \quad \hbox{as } s\to\infty, \quad \hbox{and so }
\frac{dh_m(s-t)}{h_s}\longrightarrow 1. 
\end{equation}


Let $\tilde T_s:=\inf\{\tilde T(\alpha'):\ s-t\leq|\alpha'|\leq s\}$.
A straightforward corollary of the previous lemma is the following.

\begin{corollary}
For a positive integer $s>t$, we have 
\begin{equation} 
\label{eqn:4.4a}
\tilde T_s^{sa_s} \Bigl(\prod_{|\alpha|=s}\prod_{i=1}^d Y_i(\alpha)
\Bigr)  \, \leq\, \frac{V_{m_s}}{V_{m_{s-1}}} \, \leq \,
\Bigl(\frac{m_s!}{m_{s-1}!}\Bigr)^2r^{sa_s}
\prod_{|\alpha|=s}\prod_{i=1}^d Y_i(\alpha). 
\end{equation} 
\end{corollary}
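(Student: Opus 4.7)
The plan is to telescope $V_{m_s}/V_{m_{s-1}} = \prod_{j=m_{s-1}+1}^{m_s} V_j/V_{j-1}$ and apply the two-sided bounds of Lemma \ref{lem:5.3a} to each factor. The $h_s$ indices in this range enumerate the basis elements of $\calC$ of degree exactly $s$; as noted in the paragraph preceding the corollary, these split into $d\,h_m(s-t)$ elements $z^{\alpha}\bv_i$ of type $(**)$ (with $|\alpha|=s-t$, $i\in\{1,\dots,d\}$) together with $a_s$ elements $z^{\alpha'_j}z_m^{l_j}z^{\beta_j}$ of type $(*)$ (with $z_m^{l_j}z^{\beta_j}\in\calB$, forcing $|\alpha'_j|\in[s-t+1,s]$).

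For the upper bound, Lemma \ref{lem:5.3a} gives $V_j/V_{j-1}\le j\,Y_i(\alpha)$ on the $(**)$ indices and $V_j/V_{j-1}\le j\,\tilde Y(\alpha'_j)$ on the $(*)$ indices. I estimate the two sub-products separately: each contributes a factor $\prod_j j\le m_s!/m_{s-1}!$, so the combined $j$-factors produce the square $(m_s!/m_{s-1}!)^2$ in the claimed bound. The $(*)$ sub-product of $\tilde Y(\alpha'_j)$'s is controlled via \eqref{eqn:5.2aa}: $\tilde Y(\alpha'_j)\le r^{|\alpha'_j|}\le r^s$, giving a factor $\le r^{s a_s}$. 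The $(**)$ sub-product collects into $\prod_{|\alpha|=s-t}\prod_i Y_i(\alpha)$, as required.

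For the lower bound, the reverse inequalities $V_j/V_{j-1}\ge Y_i(\alpha)$ and $V_j/V_{j-1}\ge \tilde Y(\alpha'_j)$ of Lemma \ref{lem:5.3a} immediately give $V_{m_s}/V_{m_{s-1}}\ge\prod_{(**)}Y_i(\alpha)\cdot\prod_{(*)}\tilde Y(\alpha'_j)$. The main (and only mildly technical) care-point is to replace each $\tilde Y(\alpha'_j)$ by a power of $\tilde T_s$. Namely, if $p_j\in\tilde\calM(\alpha'_j)$ realises $\tilde Y(\alpha'_j)=\|p_j\|_K$, then by the definitions of $\tilde T(\alpha')$ and of $\tilde T_s$ one has $\|p_j\|_K\ge \tilde T(\alpha'_j)^{\deg p_j}\ge \tilde T_s^{\deg p_j}$, since $|\alpha'_j|\in[s-t,s]$ ensures $\tilde T(\alpha'_j)\ge \tilde T_s$. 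Because $\deg p_j\le s$ (one may take $p_j$ with leading term $\be_j$, of degree exactly $s$) and $\tilde T_s\le 1$ may be assumed (by rescaling $K$ into a small ball; this only rescales $d(K)$ by a harmless constant), monotonicity in the exponent yields $\tilde Y(\alpha'_j)\ge \tilde T_s^{s}$, and the $a_s$-fold product then gives $\tilde T_s^{sa_s}$. The rest of the argument is routine bookkeeping of the two-sided Lemma \ref{lem:5.3a} estimates across all $j$ with $m_{s-1}<j\le m_s$.
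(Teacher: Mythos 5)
Your overall approach — telescoping $V_{m_s}/V_{m_{s-1}}$ over the $h_s$ indices of degree $s$ and applying Lemma~\ref{lem:5.3a} to each factor — is exactly the paper's, and the upper bound is handled the same way: the $j$-factors split into two partial products each dominated by $m_s!/m_{s-1}!$, giving the square, and \eqref{eqn:5.2aa} bounds each $\tilde Y(\alpha'_j)$ by $r^s$ (implicitly taking $r\geq 1$, as the paper also does). You also correctly read the product over $|\alpha|=s$ as a product over $|\alpha|=s-t$ for the type-$(**)$ elements, which is a typo in the paper's display.

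The problem is in your justification of the lower-bound step $\tilde Y(\alpha'_j)\geq\tilde T_s^s$. You write that ``one may take $p_j$ with leading term $\be_j$, of degree exactly $s$.'' But $\tilde Y(\alpha'_j)$ is the infimum of $\|p\|_K$ over \emph{all} of $\tilde\calM(\alpha'_j)$, whose elements may have leading term $z^{\alpha'_j}z_m^{l'}z^{\beta'}$ for \emph{any} $z_m^{l'}z^{\beta'}\in\calB$; their degrees range over $[|\alpha'_j|,\,|\alpha'_j|+t-1]\subseteq[s-t+1,\,s+t-1]$. Restricting to $\deg p=s$ raises the infimum, which is the wrong direction for a lower bound, so you cannot control $\deg p_j$ this way. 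The second device, ``assume $\tilde T_s\leq 1$ by rescaling $K$ into a small ball,'' is also not available: the corollary is a fixed inequality for a given $K$ and $s$, and a dilation $K\mapsto cK$ rescales $Y_i(\alpha)$, $\tilde T_s$, $r$, and $V_{m_s}$ by different $\alpha$- and degree-dependent powers of $c$, so establishing the inequality for $cK$ does not give it for $K$. (To be fair, the paper itself asserts $\tilde Y(\alpha'(\be_\nu))\geq\tilde T_s^s$ in one line without elaboration; the honest conclusion from the chain $\tilde Y(\alpha'_j)\geq\tilde T(\alpha'_j)^{\deg p_j}\geq\tilde T_s^{\deg p_j}$ is $\tilde T_s^{\deg p_j}$ with $\deg p_j\in[s-t+1,s+t-1]$, a shift of at most $t-1$ in the exponent. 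That shift is harmless after taking $l_s$-th roots in the proof of Theorem~\ref{thm:5.2a}, but it should be stated rather than papered over with the two arguments above.)
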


\begin{proof}
We apply Lemma \ref{lem:5.3a} to the product
$\frac{V_{m_s}}{V_{m_{s-1}}} = \frac{V_{m_s}}{V_{m_{s}-1}}
\frac{V_{m_s-1}}{V_{m_{s}-2}} \cdots \frac{V_{m_{s-1}+1}}{V_{m_{s-1}}}$.
For the upper estimate, we have
\begin{eqnarray*}
\frac{V_{m_s}}{V_{m_{s-1}}} &=& \frac{V_{m_s}}{V_{m_{s}-1}}
\frac{V_{m_s-1}}{V_{m_{s}-2}}  
\cdots \frac{V_{m_{s-1}+1}}{V_{m_{s-1}}} \\
&=& \Bigl(\frac{V_{m_s}}{V_{m_{s}-1}} \cdots
\frac{V_{m_{s-1}+a_s+1}}{V_{m_{s-1}+a_s}}\Bigr) 
\Bigl(\frac{V_{m_{s-1}+a_s}}{V_{m_{s-1}+a_s-1}} \cdots
\frac{V_{m_{s-1}+1}}{V_{m_{s-1}}} \Bigr) \\  
&\leq&  \Bigl(m_sm_{s-1}\cdots (m_{s-1}+a_s+1)
\prod_{|\alpha|=s}\prod_{i=1}^d Y_i(\alpha)\Bigr) \\ 
& & \qquad \times 
	\Bigl( (m_{s-1}+a_s)\cdots (m_{s-1}+1)
        \prod_{\nu=m_{s-1}+1}^{m_{s-1}+a_s} \tilde
        Y(\alpha'(\be_{\nu})) \Bigr), 
\end{eqnarray*}
where in the last two lines the first large parentheses applies
\eqref{eqn:4.2a} to those fractions $V_k/V_{k-1}$ for which
$\be_{\nu}$ is of the form $(**)$ while the second large parentheses
applies \eqref{eqn:4.3a} to  those fractions for which $\be_{\nu}$ is
of the form $(*)$.    We have also written $\alpha'(\be_{\nu})$ to
denote the multi-index $\alpha'\in\ZZ_{\geq 0}^{m-1}$ for which
$\be_{\nu}=z^{\alpha'}z_m^lz^{\beta}$.  We
have  
\begin{eqnarray*}
& & \hskip-2cm \Bigl(m_sm_{s-1}\cdots (m_{s-1}+a_s+1)
  \prod_{|\alpha|=s}\prod_{i=1}^d Y_i(\alpha)\Bigr) \\ 
& &  \quad \times \Bigl( (m_{s-1}+a_s)\cdots (m_{s-1}+1)
  \prod_{\nu=m_{s-1}+1}^{m_{s-1}+a_s} \tilde Y(\alpha'(\be_{\nu}))
  \Bigr) \\ 
	&\leq&   \Bigl(\frac{m_s!}{m_{s-1}!}
  \prod_{|\alpha|=s}\prod_{i=1}^d Y_i(\alpha)\Bigr) 
	\Bigl(\frac{m_s!}{m_{s-1}!}\prod_{\nu=m_{s-1}+1}^{m_{s-1}+a_s} r^s\Bigr)
\end{eqnarray*}
where we use \eqref{eqn:5.2aa} in the last line. This last expression is the upper estimate in \eqref{eqn:4.4a}.  The lower estimate follows similarly, using the fact that
$s-t\leq|\alpha'(\be_{\nu})|\leq s$ for all
$\nu=m_{s-1}+1,\dots,m_{s-1}+a_s$, so that $\tilde
Y(\alpha'(\be_{\nu}))\geq \tilde T_s^s$ for all $\nu$. 
\end{proof}

Similar reasoning as in the paragraphs before the above corollary give
\[
m_s\leq d\binom{s-t+m}{m} + |\calB|\binom{s-t+m-1}{m-1},
\]
and when $s>t$, 
\[
l_s=\sum_{\nu=1}^{s}\nu h_{\nu} \geq \sum_{\nu=t}^s\nu h_{\nu} \geq 
\sum_{\nu=1}^{s-t}\nu h_{\nu+t} \geq \sum_{\nu=1}^{s-t} \nu\cdot
d\binom{\nu+m-1}{m-1} = dm\binom{s-t+m}{m+1}. 
\]
Then $\displaystyle \frac{m_s}{l_s} \leq \frac{m+1}{m(s-t)} +
\frac{|\calB|(m+1)}{d(s-t)(s-t+m)}\,$, in particular $\frac{m_s}{l_s}\to 0$, and
\begin{equation}
\label{eqn:5.6c}
1\leq (m_s!)^{\frac{1}{l_s}} \leq m_s^{\frac{m_s}{l_s}}
\longrightarrow 1 \quad \hbox{as }s\to\infty. 
\end{equation}

Set $T_s(\lambda_i):=\left(\prod_{|\alpha|=s-t}
Y_i(\alpha)\right)^{\frac{1}{sh_s}}$; then \eqref{eqn:4.4a} becomes 
\begin{equation}
\tilde T_s^{sa_s}\prod_{i=1}^d T_s(\lambda_i)^{sh_s} \ \leq
\ \frac{V_{m_s}}{V_{m_{s-1}}}
\ \leq\ r^{sa_s}\Bigl(\frac{m_s!}{m_{s-1}!}\Bigr)^2 \prod_{i=1}^d
T_s(\lambda_i)^{sh_s}. 
\end{equation}
Write
$V_{m_s}=\frac{V_{m_s}}{V_{m_{s-1}}}\cdots\frac{V_{m_{t+1}}}{V_{m_t}}V_{m_t}$.
Then the above calculation yields the following. 

\begin{corollary}
\[
  \prod_{\nu=t+1}^s \left( \tilde T_{\nu}^{\nu a_{\nu}}\prod_{i=1}^d
T_{\nu}(\lambda_i)^{\nu h_{\nu}} \right) V_{m_t}  \leq  V_{m_s} \leq (m_s!)^2
\prod_{\nu=t+1}^s \left( r^{\nu a_{\nu}}\prod_{i=1}^d
T_{\nu}(\lambda_i)^{\nu h_{\nu}} \right) V_{m_t} . 
\] \qed
\end{corollary}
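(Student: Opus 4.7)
The stated inequality is obtained by simply iterating the previous corollary, so the proof is a one-step telescoping argument. The first move is to write
\[
V_{m_s} \;=\; V_{m_t}\prod_{\nu=t+1}^{s}\frac{V_{m_\nu}}{V_{m_{\nu-1}}},
\]
which requires $V_{m_t}>0$; this is a mild nondegeneracy assumption that can be arranged (for instance, by enlarging $t$ if necessary), and if it fails then both sides of the target inequality are vacuously zero.

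Next, I would apply the two-sided bound
\[
\tilde T_\nu^{\nu a_\nu}\prod_{i=1}^d T_\nu(\lambda_i)^{\nu h_\nu} \;\leq\; \frac{V_{m_\nu}}{V_{m_{\nu-1}}} \;\leq\; r^{\nu a_\nu}\Bigl(\frac{m_\nu!}{m_{\nu-1}!}\Bigr)^{2}\prod_{i=1}^d T_\nu(\lambda_i)^{\nu h_\nu}
\]
from the preceding corollary to each factor in the telescoping product, for $\nu = t+1,\ldots,s$. Multiplying the lower estimates together and then by $V_{m_t}$ gives the lower bound in the statement immediately; there is nothing further to do, since no combinatorial factor was present in the lower estimate.

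For the upper bound, the same telescoping produces the product over $\nu$ of $r^{\nu a_\nu}\prod_i T_\nu(\lambda_i)^{\nu h_\nu}$ times the accumulated factorial factor
\[
\prod_{\nu=t+1}^{s}\Bigl(\frac{m_\nu!}{m_{\nu-1}!}\Bigr)^{2} \;=\; \Bigl(\frac{m_s!}{m_t!}\Bigr)^{2} \;\leq\; (m_s!)^{2},
\]
where the last inequality uses only $m_t!\geq 1$. Reorganizing gives exactly the claimed upper bound. No real obstacle arises; the only point worth flagging in the writeup is that the telescoping of the factorial ratios collapses cleanly because each $V_{m_\nu}/V_{m_{\nu-1}}$ bound in the preceding corollary carries a factor $(m_\nu!/m_{\nu-1}!)^2$ in precisely the form needed.
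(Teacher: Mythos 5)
Your proof is correct and takes essentially the same route as the paper: telescope $V_{m_s} = V_{m_t}\prod_{\nu=t+1}^{s}\frac{V_{m_\nu}}{V_{m_{\nu-1}}}$, apply the two-sided bound on $V_{m_\nu}/V_{m_{\nu-1}}$ from the preceding display to each factor, and note that the factorial ratios collapse to $(m_s!/m_t!)^2 \leq (m_s!)^2$. Your extra remark about $V_{m_t}>0$ is a harmless clarification the paper omits (and is indeed justified, since a nonvanishing $m_s\times m_s$ Vandermonde forces some $m_t\times m_t$ minor from its top $m_t$ rows to be nonzero, so $V_{m_t}=0$ would force $V_{m_s}=0$ as well).
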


To prove Theorem \ref{thm:5.2a} we take $l_s$-th roots in the above
inequality and show that the upper and lower estimates have the
desired limit as $s\to\infty$.   
\begin{lemma}
As $s\to\infty$, we have 
\begin{equation} 
\label{eqn:5.8a}
(m_s!)^{\frac{2}{l_s}}\lto 1,\quad \frac{\sum_{\nu=t+1}^s \nu
  a_{\nu}}{l_s}\lto 0,\quad \hbox{and } \frac{sh_s}{(s-t)h_m(s-t)}\lto d. 
\end{equation}
\end{lemma}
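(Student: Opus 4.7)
The plan is to reduce all three limits to routine asymptotic estimates on the Hilbert function-type quantities $h_s$, $m_s$, $l_s$, together with the already-established bound $a_s\leq|\calB|\binom{s+m-2}{m-2}$ from the paragraph preceding \eqref{eqn:5.4a}. First I would assemble the basic asymptotics. By \eqref{eqn:hilbert}, $m_s=\frac{d}{m!}s^m+O(s^{m-1})$, and taking consecutive differences gives
\[
h_s \ = \ m_s-m_{s-1} \ = \ \frac{d}{(m-1)!}s^{m-1}+O(s^{m-2}).
\]
Substituting this into $l_s=\sum_{\nu=1}^s\nu h_\nu$ and using $\sum_{\nu=1}^s\nu^m=\frac{s^{m+1}}{m+1}+O(s^m)$ yields
\[
l_s \ = \ \frac{dm}{(m+1)!}s^{m+1}+O(s^m),
\]
so $l_s=\Theta(s^{m+1})$ and $m_s/l_s=O(1/s)$.

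For the first limit, I would invoke \eqref{eqn:5.6c} directly. Since $m_s^{m_s/l_s}=\exp\bigl((m_s\log m_s)/l_s\bigr)$ and $m_s\log m_s=O(s^m\log s)$ while $l_s=\Theta(s^{m+1})$, the exponent tends to zero, so $m_s^{m_s/l_s}\to 1$ and hence $(m_s!)^{2/l_s}\to 1$. For the second limit, the bound $a_\nu\leq|\calB|\binom{\nu+m-2}{m-2}=O(\nu^{m-2})$ gives $\sum_{\nu=t+1}^s\nu a_\nu=O(s^m)$, which divided by $l_s=\Theta(s^{m+1})$ goes to zero.

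The third limit is a direct substitution of the leading asymptotics: $h_s\sim\frac{d}{(m-1)!}s^{m-1}$ and $h_m(s-t)=\binom{s-t+m-1}{m-1}\sim\frac{1}{(m-1)!}(s-t)^{m-1}\sim\frac{s^{m-1}}{(m-1)!}$, so
\[
\frac{sh_s}{(s-t)h_m(s-t)} \ \longrightarrow \ \frac{d\,s\cdot s^{m-1}}{(s-t)\cdot s^{m-1}} \ \longrightarrow \ d
\]
as $s\to\infty$.

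There is no real obstacle here: the lemma is a bookkeeping statement packaging the asymptotic facts that will let the upper and lower estimates of the preceding corollary, after taking $l_s$-th roots, converge to the same limit $\bigl(\prod_i T(K,\lambda_i)\bigr)^{1/d}$. The one place requiring a little care is ensuring that the $O(s^{m-1})$ remainder in $h_s$ comes from \emph{differencing} the Hilbert-function estimate \eqref{eqn:hilbert}, rather than being assumed as a separate input; this is the only step that is not literally quoted from earlier in the paper.
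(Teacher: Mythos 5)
Your treatment of the first two limits is sound. For the first you essentially reprove \eqref{eqn:5.6c}, which the paper simply cites. For the second you replace the paper's implicit Stolz--Ces\`aro argument (from $a_s/h_s\to 0$, the first limit in \eqref{eqn:5.4a}) by a direct growth estimate $\nu a_\nu=O(\nu^{m-1})$ against $l_s=\Theta(s^{m+1})$; both routes work.

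The third limit is where you have a genuine gap. You derive
\[
h_s \;=\; m_s - m_{s-1} \;=\; \frac{d}{(m-1)!}s^{m-1}+O(s^{m-2})
\]
by ``taking consecutive differences'' of the asymptotic $m_s=\frac{d}{m!}s^m+O(s^{m-1})$. This step is not valid: an $O(s^{m-1})$ error term in $m_s$ does \emph{not} difference to $O(s^{m-2})$, nor even to $o(s^{m-1})$. For instance, an error $f(s)$ with $f(s)=s^{m-1}$ for odd $s$ and $f(s)=0$ for even $s$ satisfies $f(s)=O(s^{m-1})$ but $f(s)-f(s-1)=\pm s^{m-1}$, which swamps the main term. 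Your closing remark even flags this as ``the one place requiring a little care,'' but the care is not supplied; as written, $h_s\sim\frac{d}{(m-1)!}s^{m-1}$ does not follow. (It \emph{is} true, because $m_s$ agrees with the Hilbert polynomial for large $s$, so $h_s$ is a genuine difference of polynomials --- but you would need to invoke that stronger fact rather than the coarse big-$O$ estimate.)

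The paper sidesteps this entirely. By the explicit counting of basis elements of degree $s$ one has the exact identity $h_s = a_s + d\,h_m(s-t)$, with $a_s\le |\calB|\binom{s+m-2}{m-2}=O(s^{m-2})$ and $d\,h_m(s-t)=d\binom{s-t+m-1}{m-1}\sim\frac{d}{(m-1)!}s^{m-1}$. This gives the second limit of \eqref{eqn:5.4a}, namely $d\,h_m(s-t)/h_s\to 1$, and the third limit of the present lemma is then immediate since $\frac{s\,h_s}{(s-t)\,h_m(s-t)} = \frac{s}{s-t}\cdot\frac{h_s}{h_m(s-t)}\to 1\cdot d = d$. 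To repair your proof, replace the differencing step by this dimension count, which is already available in the paper.
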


\begin{proof}
The first limit follows immediately from \eqref{eqn:5.6c}.  Writing
the left-hand side of the second limit as $\frac{\sum_{\nu=t+1}^s \nu a_{\nu}}{\sum_{\nu=1}^s
  \nu h_{\nu}}$, convergence of this limit to zero follows easily from $\frac{a_s}{h_s}\to 0$ (the
first limit in \eqref{eqn:5.4a}).  The third limit (to $d$) follows easily 
from the second limit in \eqref{eqn:5.4a}.
\end{proof}

\begin{proof}[Proof of Theorem \ref{thm:5.2a}]
We first verify that 
\begin{equation} 
\label{eqn:5.9a} 
T_s(\lambda_i)\to T(K,\lambda_i)^{\frac{1}{d}} \quad \hbox{as }
s\to\infty.
\end{equation}  
By Proposition \ref{prop:4.5a}, 
\begin{eqnarray*}
\Bigl( \prod_{|\alpha|=s-t} Y_i(\alpha)   \Bigr)^{\frac{1}{(s-t)h_m(s-t)}} & =& \exp\Bigl( \frac{1}{h_m(s-t)}\sum_{|\alpha|=s-t}\log Y_i(\alpha)^{\frac{1}{|\alpha|}}  \Bigr) \\
 &\lto&  \ T(K,\lambda_i).
\end{eqnarray*}
Together with the third limit of \eqref{eqn:5.8a} and the definition
of $T_s(\lambda_i)$, we get  \eqref{eqn:5.9a}.  In turn, writing $\tilde l_s=\sum_{\nu=t+1}^s \nu h_{\nu}$, this gives the convergence 
\begin{equation*}
\Bigl(\prod_{\nu=t+1}^s T_{\nu}(\lambda_i)^{\nu h_{\nu}}
\Bigr)^{1/\tilde l_s} \ \lto \ T(K,\lambda_i)^{\frac{1}{d}} \quad
\hbox{as } s\to\infty 
\end{equation*}
of weighted geometric means.  Note that $\tilde l_s/l_s\to 1$ as $s\to\infty$, so we may replace $\tilde l_s$-th roots with $l_s$-th roots in what follows.  We have 
\begin{eqnarray*}
&&\hskip-2cm (m_s!)^{\frac{2}{l_s}} \prod_{\nu=t+1}^s \Bigl( r^{\nu
    a_{\nu}}\prod_{i=1}^d T_{\nu}(\lambda_i)^{\nu h_{\nu}}
  \Bigr)^{\frac{1}{l_s}}V_{m_t}^{\frac{1}{l_s}}   \\
    &=& (m_s!)^{\frac{2}{l_s}} r^{\frac{\sum\nu
      a_{\nu}}{l_s}} \prod_{i=1}^d \Bigl(  \prod_{\nu=t+1}^s
  T_{\nu}(\lambda_i)^{\nu h_{\nu}} \Bigr)^{1/l_s} V_{m_t}^{\frac{1}{l_s}}  \ \lto \ \Bigl(\prod_{i=1}^d T(K,\lambda_i)\Bigr)^{\frac{1}{d}} 
\end{eqnarray*}
as $s\to\infty$, which shows that $\limsup_{s\to\infty} V_{m_s}^{1/l_s}\leq
\Bigl(\prod_{i=1}^d T(K,\lambda_i)\Bigr)^{\frac{1}{d}}$. 

If $T(K,\lambda_i)=0$ for some $i$ then the theorem is proved, with
$d(K)=0$.  Otherwise, $T(K,\lambda_i)>0$ for all $i$; using Corollary
\ref{cor:3.6a} it is easy to see that $T^-(K,\lambda_i,b)>0$ for all
$i=1,\dots,d$ and $b\in\partial\Sigma_m$; and since $\partial\Sigma_m$
is compact, there exists $c>0$ such that $T^-(K,\lambda_i,b)\geq c$
for all $i$ and $b$.  By Lemma \ref{lem:4.7a},
\[
\liminf_{s\to\infty}\tilde T_s\geq \liminf_{|\alpha'|\to\infty}\tilde
T(\alpha') \geq \min_{\theta'\in\Sigma_{m-1}}\tilde T^-(\theta') \geq
\min_{i,b}T^-(K,\lambda_i,b) \geq c,
\]
so there is some uniform constant $\epsilon\in(0,c)$ such that
$T_s>\epsilon$ for all $s>t$, which gives
\[
\prod_{\nu=t+1}^s \Bigl( \epsilon^{\nu a_{\nu}}\prod_{i=1}^d
T_{\nu}(\lambda_i)^{\nu h_{\nu}} \Bigr) V_{m_t} \ \leq \ V_{m_s}. 
\]
Now the $l_s$-th root of the left-hand side of the above goes to
$\Bigl(\prod_{i=1}^d T(K,\lambda_i)\Bigr)^{\frac{1}{d}}$ as
$s\to\infty$ by a similar argument as before.  This concludes the
proof. 
\end{proof}

\section{Transfinite diameter using the standard basis} \label{sec:7}

In this section we verify that the transfinite diameter of the previous section may be computed in terms of the standard (grevlex)  basis of monomials in $\CC[V]$.  Recall that the 
basis for normal forms $\CC[z]_I$ (where $I=\bI(V)$) is given by the
collection of monomials  
\[
\{z^{\gamma}: \gamma\in\ZZ_{\geq
  0},\, z^{\gamma}\not\in\langle\lt(I)\rangle\}.
\]
 Writing $\{\tilde\be_j\}_{j=1}^{\infty}$ for the enumeration of these
 monomials according to grevlex, define $\Van(\zeta_1,\dots,\zeta_M)$
 as in the right-hand side of \eqref{eqn:6} for a finite set $\{\zeta_1,\dots,\zeta_M\}
 \subseteq V$, replacing $\be_j$'s with $\tilde\be_j$'s.  Put
\[
W_{m_s}:= \sup\{|\Van(\zeta_1,\dots,\zeta_{m_s})|:
\{\zeta_1,\dots,\zeta_{m_s}\}\subseteq K \}.
\]

Later in this section we will need to consider Vandermonde determinants formed from other graded polynomial bases.  The Vandermonde determinant associated to a basis $\calF$ will be denoted $\Van_{\calF}(\cdot)$.

\def\f{\mathbf{f}}
\begin{lemma} \label{lem:F}
Let $\calF_1=\{\tilde\f_j\}_{j=1}^{\infty}$ and $\calF_2=\{\f_j\}_{j=1}^{\infty}$ be bases of polynomials for $\CC[V]$, enumerated according to a graded ordering, and suppose that for some positive integer $M$, $\tilde\f_{\tau}=\f_{\tau}$ whenever $\tau>M$.  Then there exists a uniform constant $\kappa\neq 0$ such that for any  integer $\tau\geq M$ and finite set $\{\zeta_1,\ldots,\zeta_{\tau}\}$,
\[
\Van_{\calF_1}(\zeta_1,\ldots,\zeta_{\tau}) = \kappa\Van_{\calF_2}(\zeta_1,\ldots,\zeta_{\tau}).
\]
\end{lemma}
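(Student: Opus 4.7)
The plan is to identify $\kappa$ as the determinant of a change-of-basis matrix between $\calF_1$ and $\calF_2$. Set $s:=\deg \f_{M+1}$. Since both bases are graded, every $\tilde\f_j$ with $j \leq M$ satisfies $\deg \tilde\f_j \leq \deg\tilde\f_{M+1}=s$, so $\tilde\f_j$ lies in $\CC[V]_{\leq s} = \mathrm{span}\{\f_1, \ldots, \f_{m_s}\}$. Expand $\tilde\f_j = \sum_{k=1}^{m_s} a_{jk} \f_k$ for each such $j$, and set $a_{jk} = \delta_{jk}$ for $j > M$ (where $\tilde\f_j = \f_j$). For $\tau \geq m_s$, the multilinearity of the determinant yields
\[
\Van_{\calF_1}(\zeta_1, \ldots, \zeta_\tau) = (\det A_\tau)\cdot \Van_{\calF_2}(\zeta_1, \ldots, \zeta_\tau),
\]
where $A_\tau := (a_{jk})_{1 \leq j, k \leq \tau}$.

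The next step is to compute $\det A_\tau$ and show it is a nonzero constant. Partitioning indices as $\{1, \ldots, M\} \cup \{M+1, \ldots, m_s\} \cup \{m_s+1, \ldots, \tau\}$, the bottom $\tau - M$ rows of $A_\tau$ are standard basis vectors (from $a_{jk} = \delta_{jk}$ for $j > M$), while the first $M$ rows have support only in columns $\leq m_s$. Hence
\[
A_\tau = \begin{pmatrix} B & C & 0 \\ 0 & I_{m_s - M} & 0 \\ 0 & 0 & I_{\tau - m_s} \end{pmatrix}
\]
with $B := (a_{jk})_{j, k \leq M}$, so $\det A_\tau = \det B$, manifestly independent of $\tau$. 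To see $\det B \neq 0$, observe that $\calF_1$ being a graded basis together with $\tilde\f_j = \f_j$ for $M < j \leq m_s$ implies $\{\tilde\f_1, \ldots, \tilde\f_M, \f_{M+1}, \ldots, \f_{m_s}\}$ is a basis of $\CC[V]_{\leq s}$; the corresponding change-of-basis matrix from $\{\f_1, \ldots, \f_{m_s}\}$ to this new basis is $\bigl(\begin{smallmatrix} B & C \\ 0 & I \end{smallmatrix}\bigr)$, which must be invertible. Set $\kappa := \det B$.

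The main obstacle is establishing the block-triangular structure of $A_\tau$: one needs both the graded property of $\calF_1$ (so that each $\tilde\f_j$ with $j \leq M$ is supported on $\f_k$ with $k \leq m_s$) and the agreement $\tilde\f_j = \f_j$ for $j > M$ (so that the bottom rows of $A_\tau$ are identity rows). Strictly speaking, the displayed determinant identity requires $\tau \geq m_s$, not merely $\tau \geq M$: when $M \leq \tau < m_s$, expansions $\tilde\f_j = \sum_k a_{jk} \f_k$ can involve indices $k$ with $\tau < k \leq m_s$ that fall outside $A_\tau$, breaking the matrix identity $\widetilde{V}=V A_\tau^T$ between the two Vandermonde matrices. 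Since the lemma is invoked in the sequel for $\tau = m_s$ as $s \to \infty$ (see Theorem \ref{thm:68}), this threshold is always satisfied in applications.
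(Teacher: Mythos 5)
Your proof is correct and follows essentially the same line as the paper's: expand each $\tilde\f_j$ with $j\le M$ in terms of $\calF_2$, observe that the resulting change-of-basis matrix is block upper-triangular with identity blocks below the $M$-th row, and identify $\kappa$ as the determinant of the upper-left block. You have, however, correctly caught a subtlety that the paper glosses over. The paper's proof begins by writing $E_M = P_M F_M$, calling $P_M$ ``the change of basis matrix from $\{\tilde\f_j\}_{j=1}^M$ to $\{\f_j\}_{j=1}^M$ over the linear space spanned by these polynomials''; this implicitly assumes $\span\{\tilde\f_1,\dots,\tilde\f_M\} = \span\{\f_1,\dots,\f_M\}$, which need not hold when $M$ falls strictly inside a degree block. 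Indeed the conclusion can actually fail there: in $\CC[z_1,z_2]$ take $\calF_2=\{1,z_1,z_2,\dots\}$ and $\calF_1=\{1,z_1+z_2,z_2,\dots\}$ with $M=\tau=2$; writing $\zeta_i=(a_i,b_i)$, one has $\Van_{\calF_1}=(a_2-a_1)+(b_2-b_1)$ and $\Van_{\calF_2}=a_2-a_1$, which are not proportional. Your refined threshold $\tau\ge m_s$ with $s=\deg\f_{M+1}$ repairs the statement (equivalently, the lemma holds as stated precisely when $M$ is a degree boundary $m_{s_0}$), and since the paper only ever invokes the lemma with $M=m_{\tau-1}$ and $\tau$-index $m_\tau$, nothing downstream in Theorem \ref{thm:68} is affected.
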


\begin{proof}
Fix the set $\{\zeta_1,\ldots,\zeta_{\tau}\}$ where $\tau\geq M$.  Let 
$E_l=[\tilde\f_j(\zeta_k)]_{j,k=1}^l$ and  $F_l=[\f_j(\zeta_k)]_{j,k=1}^l$
denote the Vandermonde matrices at the $l$-th stage for $l=1,\ldots,\tau$.  With this notation, we have $E_M=P_MF_M$, where $P_M$ is the change of basis matrix from $\{\tilde\f_j\}_{j=1}^M$ to $\{\f_j\}_{j=1}^M$ over the linear space spanned by these polynomials.  In particular, $\det P_M\neq 0$.  Taking determinants, 
$\Van_{\calF_1}(\zeta_1,\ldots,\zeta_M) = \det(P_M)\Van_{\calF_2}(\zeta_1,\ldots,\zeta_M)$.

Similarly, write $E_{\tau}=P_{\tau}F_{\tau}$; then $E_{\tau}$ and $F_{\tau}$ are of the form 
\[
E_{\tau} = 
\left[\begin{array}{c}
E_M\, |\ * \\  \hline
E'
\end{array}\right], \quad
F_{\tau} = \left[\begin{array}{c}
F_M\, |\ * \\  \hline
E'
\end{array}\right], 
\]
the last rows (denoted by $E'$) being the same since $\be_l=\f_l$ when $l>M$.  It follows that $P_{\tau}$ must be of the form $P_{\tau} = \left[\begin{array}{c|c} P_M & * \\ \hline 0 & I    \end{array} \right]$ where $I$ denotes the identity matrix, so that $\det P_{\tau}=\det P_M$.  

Taking $\kappa:=\det P_M$, the lemma follows immediately.
\end{proof}	

Recall that the basis $\calC$ of Definition \ref{def:basisCprec} is made up of the \emph{normal forms} of two types of polynomials:\footnote{cf. Remark \ref{rmk:nfbasis}.}
\begin{eqnarray*}
(*) \quad z^{\alpha}z_m^lz^{\beta}: & & \alpha\in\ZZ_{\geq 0}^{m-1}, \ l+|\beta|\leq t-1 \\
(**) \quad z^{\alpha}z_m^l\bv_i: & & \alpha\in\ZZ_{\geq 0}^{m-1}, \ l\geq 0,\ i=1,\ldots,d.
\end{eqnarray*}
When these polynomials are already normal forms, as in the examples of Section \ref{sec:3}, we have the following theorem.

\begin{theorem} \label{thm:68}
Suppose the polynomials $(*)$ and $(**)$ are already in normal form.  Then $\displaystyle \lim_{s\to\infty} W_{m_s}^{1/l_s} = d(K)$.  (Here $l_s,m_s$ are as in Notation \ref{not:l_sm_s}.)
\end{theorem}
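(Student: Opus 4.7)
My plan is to compare $W_{m_s}$ to $V_{m_s}$, for which Theorem~\ref{thm:5.2a} gives $V_{m_s}^{1/l_s}\to d(K)$, and show they differ by a factor $|\kappa_s|$ with $|\kappa_s|^{1/l_s}\to 1$. The key tool will be Lemma~\ref{lem:F} applied via a suitable intermediate basis.

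First I would construct, for each $s$, a graded basis $\calF_s$ by taking $(\calF_s)_j=\be_j$ for $j\leq m_s$ and $(\calF_s)_j=\tilde\be_j$ for $j>m_s$; this is a valid basis since both $\calC\cap\CC[V]_{\leq s}$ and the standard monomials of degree $\leq s$ are graded bases of $\CC[V]_{\leq s}$ of size $m_s$. Since $\calF_s$ agrees with $\{\tilde\be_j\}$ for all $j>m_s$, Lemma~\ref{lem:F} gives a nonzero constant $\kappa_s$---the determinant of the change-of-basis matrix from $\calC\cap\CC[V]_{\leq s}$ to standard monomials of degree $\leq s$---such that $\Van_{\calC}(\zeta_1,\ldots,\zeta_{m_s})=\kappa_s\Van(\zeta_1,\ldots,\zeta_{m_s})$ for all configurations. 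Taking sup over $K$ yields $V_{m_s}=|\kappa_s|W_{m_s}$.

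Second, I would exploit the graded structure to write $|\kappa_s|=\prod_{\nu=0}^s|\det Q_\nu|$, where $Q_\nu$ is the degree-$\nu$ block. By the hypothesis, the $(*)$ elements are standard monomials and contribute identity blocks. For $\nu\geq t$, writing $\bv_i=\sum_j c_{ij}z^{\gamma_j}$ and using the natural choice of $\bv_i$'s as polynomials in $z_m,\ldots,z_n$ only (permissible since $\bv_i$ is determined modulo $\langle z_1,\ldots,z_{m-1}\rangle_t$ by Corollary~\ref{cor:3.3}, and matching the examples in Section~\ref{sec:3}), the $(**)$-elements $z^\alpha z_m^l\bv_i$ with $|\alpha|+l=\nu-t$ split into $h_m(\nu-t)$ non-overlapping groups of size $d$, each contributing the same $d\times d$ block $\bW=(c_{ij})$ to $Q_\nu$. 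Hence $|\det Q_\nu|=|\det\bW|^{h_m(\nu-t)}$ and
\[
|\kappa_s|=|\det\bW|^{\sum_{\nu=t}^s h_m(\nu-t)}=|\det\bW|^{\binom{s-t+m}{m}}.
\]

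Third, the Hilbert function estimates from Section~\ref{sec:6} give $\binom{s-t+m}{m}=O(s^m)$ while $l_s=\Theta(s^{m+1})$, so $\binom{s-t+m}{m}/l_s\to 0$ and therefore $|\kappa_s|^{1/l_s}\to 1$. Combining with Theorem~\ref{thm:5.2a} gives $W_{m_s}^{1/l_s}=V_{m_s}^{1/l_s}/|\kappa_s|^{1/l_s}\to d(K)$. The main hurdle is the clean block-diagonal decomposition in Step~2: one needs the $\bv_i$'s to involve only $z_m,\ldots,z_n$ and their constituent degree-$t$ monomials to number exactly $d$, making $\bW$ square and invertible. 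If either condition proves delicate in general, a fallback is to apply Hadamard-type bounds to both $Q_\nu$ and $Q_\nu^{-1}$: since each row has boundedly many nonzero entries of bounded magnitude, one obtains $\log|\det Q_\nu|^{\pm 1}=O(h_m(\nu-t))$, which still yields $\log|\kappa_s|=o(l_s)$ and hence the same conclusion.
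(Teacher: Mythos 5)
Your overall strategy—compare $W_{m_s}$ to $V_{m_s}$ via Lemma~\ref{lem:F}, decompose the change-of-basis determinant $\kappa_s$ by degree into blocks $Q_\nu$, bound $\log|\det Q_\nu|$ by $O(h_m(\nu-t))$, and conclude via $b_\tau/l_\tau\to 0$—is the right skeleton, and your Steps~1 and~3 match what the paper does (its version of your Step~3 is the estimate \eqref{eqn:bt}). The gap is concentrated in Step~2, and it is a real one. Your primary argument needs the $\bv_i$'s to be supported on $z_m,\ldots,z_n$ alone; this forces the monomial supports of the groups $\{z^\alpha z_m^l\bv_i\}_{i=1}^d$ to be pairwise disjoint across distinct $(\alpha,l)$ with $|\alpha|+l=\nu-t$, and a dimension count then forces $|\calD|=d$, so the blocks are $d\times d$. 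But neither the theorem's hypothesis nor Corollary~\ref{cor:3.3} guarantees that a $v_i$ chosen in $\CC[z_m,\ldots,z_n]$ produces a $\bv_i$ that is a normal form, let alone one making every product $z^\alpha z_m^l\bv_i$ a normal form: killing the $z_1,\ldots,z_{m-1}$ dependence can push $\bv_i$ out of the normal-form span, and the two constraints are not obviously simultaneously satisfiable. If $\bv_i$ does involve $z_1,\ldots,z_{m-1}$, monomials from distinct $(\alpha,l)$ groups can coincide, the $(**)$-part of $Q_\nu$ is no longer block-diagonal, and your clean formula $|\det Q_\nu|=|\det\bW|^{h_m(\nu-t)}$ breaks down.

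Your fallback does not close the gap. Hadamard on $Q_\nu$ is fine, but for $Q_\nu^{-1}$ the assertion that "each row has boundedly many nonzero entries of bounded magnitude" is exactly the statement that needs proof. The entries of $Q_\nu^{-1}$ are cofactors of $Q_\nu$ divided by $\det Q_\nu$; without the disjointness structure there is no a priori bound on how many elements of $\calC$ occur in the $\calC$-expansion of a single monomial, nor on their coefficients, and the lower bound on $|\det Q_\nu|$ is precisely the hard part. The paper handles the general case by a row-reduction device: it iteratively builds monomial sets $\calW_0\subset\calW_1\subset\cdots$, and for each group $\{z^{\alpha(\nu)}\bv_j\}_{j=1}^d$ chooses $d$ pivot monomials $z^{\alpha(\nu)+\beta(j)}$ so that the corresponding $d\times d$ minor $A_{(\nu)}$ of the fixed $d\times|\calD|$ coefficient matrix is nonsingular; the change of basis at each step is block triangular with determinant $\det A_{(\nu)}$ (equation~\eqref{eqn:W}), and since there are only finitely many nonsingular minors one gets uniform bounds $c\le|\det A_{(\nu)}|\le C$ and hence $c^{b_\tau}\le|\kappa_\tau|\le C^{b_\tau}$. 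Under the extra hypotheses $|\calD|=d$ and $\bv_i\in\CC[z_m,\ldots,z_n]$ (true in the hypersurface case), your block-diagonal argument is correct and shorter; but for the theorem as stated you need the paper's iterative pivot construction or an equivalent replacement for it.
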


The idea is to show that
$({V_{m_s}^{1/l_s}}/{W_{m_s}^{1/l_s}})\to 1$ as $s\to\infty$, where $V_{m_s}$ is as in the notation of the previous section.  To this end, we analyze the Vandermonde determinants that give these quantities in more detail.

\def\calD{\mathcal{D}}
Write 
$$
\bv_j(z) = \sum_{\beta\in\calD} A_{j\beta}z^{\beta}, \qquad j=1,\ldots,d
$$
where $\calD$ is the collection of all basis monomials that appear in the polynomials $\bv_j$ for all  $j=1\ldots,d$.  Choose constants $c,C>0$ such that for any positive integer $k\leq d$, 
\begin{equation}\label{eqn:cC}
c  \leq  |\det A|\leq C
\end{equation}
whenever $A$ is a $k\times k$ nonsingular square matrix obtained by deleting sufficiently many rows and columns of the $d\times |\calD|$ matrix $\bigl[A_{j\beta}\bigr]_{j,\beta}$.\footnote{Since only the absolute value of the determinant appears, the order of the columns (indexed by $\beta$) is not important.}  There are finitely many possible values for $|\det A|$, so we may take the maximum and minimum of these as our constants.

\medskip

We are interested in $|\Van(\zeta_1,\ldots,\zeta_{m_{\tau}})|$ for a finite set $\{\zeta_1,\ldots,\zeta_{m_{\tau}}\}$.  The value is the same for any graded ordering of the monomials of $\CC[V]_{\leq\tau}$, so let us construct yet another graded ordering that will be convenient for calculation.

Fix the usual grevlex ordering on monomials of degree $<t$.  For $\tau\geq t$, and supposing that monomials of degree $<\tau$ have already been ordered, we order the monomials of degree $\tau$ as follows.  First, list the monomials of the form $(*)$ according to the ordering on $\calC$.   We set up some convenient notation before continuing.
\def\calW{\mathcal{W}}
\begin{notation}\rm 
Let $\calW_0$ be the set consisting of the monomial basis of $\CC[V]_{\leq\tau-1}$ together with the monomials of the form $(*)$ of degree $\tau$.  Let $\bW_0$ denote this same set with our ordering imposed.  (With this notation, the matrices given below are uniquely determined.)  Also, $\bW_k$ will have the same meaning when $\calW_k$, $k=1,2,\ldots$ is defined later in the section.
\end{notation}

Having listed the monomials in $\calW_0$, we will use the elements of $(**)$ to order the remaining monomials in $\CC[V]_{\leq\tau}$.    Before we do this, observe that for $\alpha\in\ZZ_{\geq 0}^m$,
\[
z^{\alpha}\bv_j \ = \ \sum_{\beta\in\calD} A_{j\beta}z^{\alpha+\beta},
\]
and since $z^{\alpha}\bv_j$ is a normal form, each of the monomials in the sum on the right-hand side is a basis monomial.  

Returning to the construction of our ordering, let us enumerate the multi-indices  $\alpha\in\ZZ_{\geq 0}^m$ of total degree $\tau-t$ as $\alpha(1),\alpha(2),\ldots$, according to their order of appearance in the elements of the form $(**)$ in $\calC$.  

The polynomials $\{z^{\alpha(1)}\bv_j\}_{j=1}^d$ are linearly independent by Theorem \ref{thm:freemodule}.  This allows us to choose, for each 
$j=1,\ldots,d$,  a term $z^{\beta(j)}$ of $z^{\alpha(1)}\bv_j$ that is not a term of $z^{\alpha(1)}\bv_i$ whenever $i<j$.  We can also arrange that none of these terms be in $\calW_0$ either, since by the construction of $\calC$ in Section \ref{sec:2}, none of the polynomials  $z^{\alpha(1)}\bv_j$ are in the span of $\calW_0$.   The set of monomials defined by 
\[
\calW_1 \  := \ \{z^{\gamma}: \  z^{\gamma}\in\calW_0  \hbox{ or  } z^{\gamma}=z^{\alpha(1)+\beta(j)}        \}
\]
is therefore a linearly independent subset of basis monomials in $\CC[V]_{\leq\tau}$. 

\begin{remark}  \label{rmk:W}\rm
When $k>1$, note that $z^{\alpha(k)}\bv_j$ is not in the span of $\calW_1$. If it were, then all its monomials would be in $\calW_1$, and, irrespective of how one orders the remaining monomials that are not in $\calW_1$,  the change of basis matrix on $\CC[V]_{\leq\tau}$ from $\calC$ to the monomial basis would not have full rank.  This contradicts the fact that a change of basis matrix must be invertible. \end{remark}

  Now, write  
\[
\left[\begin{array}{c}
\bW_0 \\ \hline z^{\alpha(1)}\bv_1 \\ \vdots \\ z^{\alpha(1)}\bv_d \\ \hline \hbox{rest of }\calC   \\ (\deg\leq\tau) 
\end{array}\right] \ = \ 
\left[\begin{array}{c}
\bW_0 \\ \hline \strut
\sum_{\beta}A_{1\beta}z^{\alpha(1)+\beta} \\ \vdots \\
\sum_{\beta}A_{d\beta}z^{\alpha(1)+\beta}  \\ \hline \hbox{rest of }\calC \\ (\deg\leq\tau)
\end{array}\right]  \ = \ 
\left[\begin{array}{c|c|c}
I  & 0 & 0 \\ \hline
* & A_{(1)} & * \\ \hline
0 & 0 & I 
\end{array}\right]   \left[\begin{array}{c}
\bW_0 \\ \hline
z^{\alpha(1)+\beta(1)} \\
\vdots \\
z^{\alpha(1)+\beta(d)} \\ \hline
\hbox{rest of } \calC \\   (\deg\leq\tau)
\end{array} \right]
\]
where the $(j,k)$-th entry in the block $A_{(1)}$ is given by $A_{j\beta}$ with $\beta=\beta(k)$.  (The `$*$' in the blocks adjacent to $A_{(1)}$ also consist of entries of the form $A_{j\beta}$ but do not enter into subsequent calculations.)  Clearly $c\leq \det A_{(1)}\leq C$ as in \eqref{eqn:cC}.

  Let us write this more compactly as
\[ 
\left[\begin{array}{c}
\bW_0 \\   \hline \hbox{rest} \\ \hbox{of } \calC
\end{array}\right] \ = \ 
\left[\begin{array}{c|c|c}
I  & 0 & 0 \\ \hline
* & A_{(1)} & * \\ \hline
0 & 0 & I 
\end{array}\right]   \left[\begin{array}{c}
\bW_1 \\ \hline
\hbox{rest} \\   \hbox{of } \calC
\end{array} \right] .
\]

The ordering of the remaining monomials is done by repeating the same process as above with the polynomials $z^{\alpha(2)},z^{\alpha(3)},\ldots$, in turn, to form  $\calW_2,\calW_3,\ldots,$ etc.  Assuming that $\calW_{\nu-1}$ has already been constructed, consider the polynomials $\{z^{\alpha(\nu)}\bv_j\}_{j=1}^d$.  They are linearly independent, and by similar reasoning as in Remark \ref{rmk:W}, none of them are in the span of $\calW_{\nu-1}$.  Hence they yield $d$ additional basis monomials which, adjoined to $\calW_{\nu-1}$, form the set $\calW_{\nu}$.  We also have an equation of the form
\begin{equation}\label{eqn:W}
\left[\begin{array}{c}
\bW_{\nu-1} \\   \hline \hbox{rest} \\ \hbox{of } \calC
\end{array}\right] \ = \ 
\left[\begin{array}{c|c|c}
I  & 0 & 0 \\ \hline
* & A_{(\nu)} & * \\ \hline
0 & 0 & I 
\end{array}\right]   \left[\begin{array}{c}
\bW_{\nu} \\ \hline
\hbox{rest} \\   \hbox{of } \calC
\end{array} \right] ,
\end{equation}
with $c\leq |\det A_{(\nu)}|\leq C$ as in \eqref{eqn:cC}.  This is the main formula needed for the proposition below.

\begin{example}\rm
For the complexified sphere $\bV(z_1^2+z_2^2+z_3^2-1)$ in $\CC^3$, the elements of degree $\tau$ in the basis $\calC$ are 
\[
z_1^{\tau},\ z_1^{\tau-1}\bv_1,\ z_1^{\tau-1}\bv_2,\ z_1^{\tau-2}z_2\bv_1,\  z_1^{\tau-2}z_2\bv_2, \ \ldots,
\]   
where $\bv_1=\frac{1}{2}(z_2+iz_3)$ and $\bv_2=\frac{1}{2}(z_2-iz_3)$.
Then 
\[ \calW_0=\{\ldots,z_1^{\tau}\}, \ \calW_1=\{\ldots,z_1^{\tau},z_1^{\tau-1}z_2,z_1^{\tau-1}z_3\},\ 
\calW_2=\calW_1\cup\{z_1^{\tau-2}z_2^2,z_1^{\tau-2}z_2z_3\}.
\]  
\end{example}

\medskip

Recall that for a positive integer $\tau\geq t$,  $h_m(\tau-t)$ coincides with the number of multi-indices $\alpha$ for which $z^{\alpha}\bv_j$ is an element in the basis $\calC$ of degree $\tau$, where $j\in\{1,\ldots,d\}$.
Introduce the notation
\[
b_{\tau}:= \sum_{s=t}^{\tau} h_m(s-t). 
\]
  A straightforward calculation shows that 
\begin{equation}\label{eqn:bt}  b_{\tau}/l_{\tau} \to 0 \  \hbox{ as } \tau\to\infty. \end{equation}

\begin{proposition}
For any collection of points $\{\zeta_1,\ldots,\zeta_{m_{\tau}}\}$, with $\tau\geq t$, we have
\[
c^{b_{\tau}}|\Van(\zeta_1,\ldots,\zeta_{m_{\tau}})| \leq |\Van_{\calC}(\zeta_1,\ldots,\zeta_{m_{\tau}})| \leq C^{b_{\tau}}|\Van(\zeta_1,\ldots,\zeta_{m_{\tau}})|
\]
where $c,C$ are as in \eqref{eqn:cC}.
\end{proposition}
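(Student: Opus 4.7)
The plan is to interpolate between $\calC$ and the standard grevlex monomial basis of $\CC[V]_{\leq\tau}$ through a sequence of $b_\tau$ elementary ``batch replacements,'' each of which multiplies the absolute value of the Vandermonde determinant by a factor lying in $[c,C]$. Concretely, I will construct a chain of graded bases $\mathcal{G}_0 = \calC, \mathcal{G}_1, \dots, \mathcal{G}_{b_\tau}$ of $\CC[V]_{\leq\tau}$, where at step $\nu$ the $d$ polynomials $\{z^{\alpha(\nu)}\bv_j\}_{j=1}^d$ of some degree $s \in \{t,t+1,\dots,\tau\}$ are replaced by the $d$ monomials $\{z^{\alpha(\nu)+\beta(j)}\}_{j=1}^d$ singled out in the construction preceding \eqref{eqn:W}. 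The index $\nu$ runs over all multi-indices $\alpha$ with $|\alpha|=s-t$ for each $s = t,\dots,\tau$, giving a total of $\sum_{s=t}^{\tau} h_m(s-t) = b_\tau$ batches.

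For each $\nu$, the bases $\mathcal{G}_{\nu-1}$ and $\mathcal{G}_\nu$ agree past the $d$ consecutive positions affected by the batch, so Lemma~\ref{lem:F} applies. The change-of-basis matrix on the first $M$ entries (where $M$ is the last index of the batch) has exactly the block form of \eqref{eqn:W}, namely an identity block, the $d\times d$ block $A_{(\nu)}$ flanked by asterisks, and another identity block. Cofactor expansion along the first identity block reduces the determinant to $\det\left[\begin{smallmatrix} A_{(\nu)} & * \\ 0 & I\end{smallmatrix}\right] = \det A_{(\nu)}$. Hence by \eqref{eqn:cC},
\[
|\Van_{\mathcal{G}_{\nu-1}}(\zeta_1,\dots,\zeta_{m_\tau})| = |\det A_{(\nu)}|\cdot |\Van_{\mathcal{G}_\nu}(\zeta_1,\dots,\zeta_{m_\tau})|, \quad c \leq |\det A_{(\nu)}| \leq C.
\]

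Telescoping these $b_\tau$ identities yields
\[
|\Van_\calC(\zeta_1,\dots,\zeta_{m_\tau})| = \Bigl(\prod_{\nu=1}^{b_\tau}|\det A_{(\nu)}|\Bigr)|\Van_{\mathcal{G}_{b_\tau}}(\zeta_1,\dots,\zeta_{m_\tau})|,
\]
and the product is sandwiched between $c^{b_\tau}$ and $C^{b_\tau}$. To finish, observe that $\mathcal{G}_{b_\tau}$ consists entirely of monomials: each inserted $z^{\alpha(\nu)+\beta(j)}$ is a term of the normal form $z^{\alpha(\nu)}\bv_j$ (using the hypothesis of the theorem that the elements of $(*)$ and $(**)$ are already normal forms), hence is itself a normal-form monomial. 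Since $\mathcal{G}_{b_\tau}$ is a basis of $\CC[V]_{\leq\tau}$ of size $m_\tau$, and the set of normal-form monomials of degree $\leq\tau$ is a uniquely determined set of exactly $m_\tau$ elements, $\mathcal{G}_{b_\tau}$ coincides as a set with $\{\tilde\be_1,\dots,\tilde\be_{m_\tau}\}$. Both orderings are graded, so they differ only by permutations within each fixed-degree stratum, which change the Vandermonde determinant only by a sign. Therefore $|\Van_{\mathcal{G}_{b_\tau}}(\zeta_1,\dots,\zeta_{m_\tau})| = |\Van(\zeta_1,\dots,\zeta_{m_\tau})|$, and the claimed inequality follows.

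The main technical point is verifying that each $\mathcal{G}_\nu$ is genuinely a basis and that the change-of-basis matrix assumes the precise block form of \eqref{eqn:W}. This comes directly from the construction: the linear independence argument of Remark~\ref{rmk:W} ensures the $d$ new monomials at step $\nu$ are independent of the previously fixed entries, and the block structure encodes how each new monomial $z^{\alpha(\nu)+\beta(j)}$ participates (with coefficient $A_{j\beta(j)}$) in exactly one of the old basis polynomials $z^{\alpha(\nu)}\bv_i$. Once this bookkeeping is in place, the rest of the argument is a straightforward chaining of Lemma~\ref{lem:F}.
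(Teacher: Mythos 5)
Your proof is correct and amounts to the paper's argument with the induction on $\tau$ unrolled into a single chain. The paper inducts on $\tau$: it builds the intermediate Vandermonde determinants $\Van_0,\dots,\Van_{h_m(\tau-t)}$ only for the degree-$\tau$ batches (picking up a factor $|\det A_{(\nu)}|$ per batch), and then invokes Lemma~\ref{lem:F} once to splice in the inductive hypothesis for degrees $\le\tau-1$ via the constant $\kappa$ with $|\kappa|\le C^{b_{\tau-1}}$. You instead run one chain $\calC=\mathcal{G}_0\to\mathcal{G}_1\to\cdots\to\mathcal{G}_{b_\tau}$ through all $b_\tau$ batches across all degrees $t,\dots,\tau$, applying Lemma~\ref{lem:F} once per step; the telescoping gives the same product of determinants. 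The ingredients are identical — the block form of \eqref{eqn:W}, the bound $c\le|\det A_{(\nu)}|\le C$ from \eqref{eqn:cC}, the linear-independence argument of Remark~\ref{rmk:W}, and the homogeneity of $\calC$ (guaranteed here by the normal-form hypothesis) to ensure the change-of-basis matrix really has the claimed block structure. Your concluding cardinality argument identifying $\mathcal{G}_{b_\tau}$ with the normal-form monomial basis (up to permutation within each degree stratum, hence up to sign in the Vandermonde) is a clean way to pin down the endpoint of the chain; the paper leaves this implicit in its definition of $\Van_{h_m(\tau-t)}$. One small formal point: Lemma~\ref{lem:F} as stated concerns bases of all of $\CC[V]$, whereas your $\mathcal{G}_\nu$ are bases of $\CC[V]_{\le\tau}$; this is harmless since you can extend each $\mathcal{G}_\nu$ by the $\calC$-elements of degree $>\tau$, or just invoke the finite-dimensional change-of-basis computation inside the proof of Lemma~\ref{lem:F} directly.
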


\begin{proof}
The proof is by induction on $\tau$.  We concentrate on the upper inequality involving $C$, and note that the same proof works for the lower inequality.   When $\tau=t$, we have
\begin{eqnarray*}
\left[\begin{array}{c}
\hbox{monomials in } (*) \\ \hbox{of } \deg\leq t \\  \hline 
\bv_1 \\ \vdots \\ \bv_d
\end{array}\right]  =  
\left[\begin{array}{c}
\bW_0 \\ \hline 
\sum_{\beta}A_{1\beta}z^{\beta} \\ \vdots \\  \sum_{\beta}A_{d\beta}z^{\beta}
\end{array}\right]  &=&  
\left[ \begin{array}{c} 
I\ | \  0 \\ \hline A
\end{array}\right]
  \begin{bmatrix}
\bW_1
\end{bmatrix} 
\end{eqnarray*}
and note that in this case, $[\bW_1]$ uses \emph{all} monomials of degree $\leq t$.  Forming Vandermonde determinants, we have 
\[
|\Van_{\calC}(\zeta_1,\ldots,\zeta_{m_t})| \ = \ 
\left|\det \left[ \begin{array}{c} 
I\ | \  0 \\ \hline A
\end{array}\right]\Van(\zeta_1,\ldots,\zeta_{m_t}) \right| \leq C|\Van(\zeta_1,\ldots,\zeta_{m_t})|, 
\]
where we apply \eqref{eqn:cC} and the fact that the determinant in the middle term is the determinant of a $d\times d$ minor of $A$.  This proves the base case.

\medskip

Suppose the inequality holds when $\tau$ is replaced by $\tau-1$.   
 For $j=0,\ldots,b_{\tau}$ let us introduce the convenient notation $\Van_j(\zeta_1,\ldots,\zeta_{m_{\tau}})$ for the ``intermediate'' Vandermonde determinants:
\[
\Van_j(\zeta_1,\ldots,\zeta_{m_{\tau}}) \ = \ 
\det\left[\begin{array}{ccc}
\bW_j(\zeta_1) & \cdots & \bW_j(\zeta_{m_{\tau}}) \\  \hline
z^{\alpha(j+1)}\bv_1(\zeta_1) & \cdots & z^{\alpha(j+1)}\bv_1(\zeta_{m_{\tau}}) \\
\vdots & \ddots & \vdots \\
z^{\alpha(b_{\tau})}\bv_1(\zeta_1) & \cdots & z^{\alpha(b_{\tau})}\bv_d(\zeta_{m_{\tau}}) 
\end{array}\right].
\]
In particular, $|\Van_{h_m(\tau-t)}(\zeta_1,\ldots,\zeta_{m_{\tau}})|=|\Van(\zeta_1,\ldots,\zeta_{m_{\tau}})|$.

Using equation \eqref{eqn:W},
\[ 
|\Van_{\nu-1}(\zeta_1,\ldots,\zeta_{m_{\tau}})| = |\det(A_{(\nu)})|\cdot|\Van_{\nu}(\zeta_1,\ldots,\zeta_{m_{\tau}})|
\leq C|\Van_{\nu}(\zeta_1,\ldots,\zeta_{m_{\tau}})|
\]
for all  $\nu=1,\ldots,b_{\tau}$, and hence by repeated application of the above, 
\[
|\Van_0(\zeta_1,\ldots,\zeta_{m_{\tau}})|\leq C^{h_{m}(\tau-t)}|\Van(\zeta_1,\ldots,\zeta_{m_{\tau}})|.
\]

If we define $\kappa$ by the equation $\Van_{\calC}(\zeta_1,\ldots,\zeta_{m_{\tau}-1})=\kappa\Van(\zeta_1,\ldots,\zeta_{m_{\tau}-1})$, then by Lemma \ref{lem:F}, 
\[ \Van_{\calC}(\zeta_1,\ldots,\zeta_{m_{\tau}})=\kappa\Van_0(\zeta_1,\ldots,\zeta_{m_{\tau}}) \]
as both determinants use the same elements $\{\be_{m_{\tau-1}+1},\ldots,\be_{m_{\tau}}\}$ of degree $\tau$.  Also, note that by the inductive hypothesis, we have $|\kappa|\leq C^{b_{\tau-1}}$. 

 Putting everything together, 
\[
\begin{aligned}
|\Van_{\calC}(\zeta_1,\ldots,\zeta_{m_{\tau}})| 
&\leq C^{b_{\tau-1}}|\Van_0(\zeta_1,\ldots,\zeta_{m_{\tau}})| \\
&\leq C^{b_{\tau-1}+h_m(\tau-t)}|\Van(\zeta_1,\ldots,\zeta_{m_{\tau}})|
=C^{b_{\tau}}|\Van(\zeta_1,\ldots,\zeta_{m_{\tau}})|, 
\end{aligned}  
\]
and the induction is complete.
\end{proof}

Theorem \ref{thm:68} is now an easy corollary. 

\begin{proof}[Proof of Theorem \ref{thm:68}]
Let $K\subset V$ be a compact set.  If $W_{m_{\tau}}=0$ for some $\tau$, then (by a similar argument as in Lemma \ref{lem:F}) $W_{m_{s}}=V_{m_{s}}=0$ for all $s\geq\tau$, and the theorem follows.

 Otherwise, suppose $W_{m_{\tau}}>0$ for all $\tau$. It follows easily from the above proposition that 
\begin{equation}\label{eqn:WV}  c^{b_{\tau}}W_{m_{\tau}} \leq V_{m_{\tau}} \leq C^{b_{\tau}}W_{m_{\tau}}. \end{equation}
 Using \eqref{eqn:bt}, we have $c^{b_{\tau}/l_{\tau}},C^{b_{\tau}/l_{\tau}}\to 1$ as $\tau\to\infty$.  Hence dividing by $W_{m_{\tau}}$ and taking $l_{\tau}$-th roots in \eqref{eqn:WV}, we have 
$(V_{m_{\tau}})^{1/l_{\tau}}/(W_{m_{\tau}})^{1/l_{\tau}}\to 1$ as $\tau\to\infty$.  The theorem is proved. 
\end{proof}

We close the section by sketching an argument that shows how to get rid of the assumption that the products $z^{\alpha}z_m^lz^{\beta}$ and $z^{\alpha}z_m^l\bv_j$ used in Theorem \ref{thm:68} are normal forms.  In general, the methods of this section can be used to construct a basis $\calW$ of linearly independent (but not necessarily normal form) monomials on the variety $V$, made up of the terms in these products.  The same proofs also show that  transfinite diameter defined in terms of $\Van_{\calW}(\cdot)$ gives the same value as that defined in terms of $\Van_{\calC}(\cdot)$.

Now all monomials in $\calW$  are of the form 
\[ z^{\alpha}z^{\beta} = z_1^{\alpha_1}\cdots z_m^{\alpha_m} z_{m+1}^{\beta_{m+1}}\cdots z_n^{\beta_n} \]
with $|\beta|\leq t$, since $\deg\bv_i=t$ for all $i$.  Given $z^{\alpha}z^{\beta}$ as above, consider a monomial $z^{\alpha}z^{\tilde\beta}$ with $|\tilde\beta|\leq s$ for some $s\geq t$.  Then for any compact set $K\subset V$ that avoids the coordinate axes in $\CC^n$,\footnote{Further analysis can be carried out at the end to remove this condition on the axes.}  one can find constants $m$ and $M$, such that, upon evaluating these monomials at any point $\zeta\in K$, 
\begin{equation}\label{eqn:mM}
m^s  \leq  \frac{|z^{\alpha}z^{\tilde\beta}(\zeta)|}{|z^{\alpha}z^{\beta}(\zeta)|} \leq M^s .
\end{equation}
(For example, choose an $M>1$ such that $M\geq\frac{\max\{|z|\, :\ z\in K\}}{\min\{|z_i|\, :\ z=(z_1,\ldots,z_n)\in K\})}$. )

\medskip

 All elements of the (grevlex) monomial basis for $\CC[V]$ have their total degree in the variables  $z_{m+1},\ldots,z_n$ uniformly bounded above (say by $s\geq t$), as a consequence of our hypotheses in Section \ref{sec:3} on Noether normalization.  We can therefore compare these basis monomials to those in $\calW$ using \eqref{eqn:mM}.  

For an integer $\tau\geq t$ and collection of points $\{\zeta_1,\ldots,\zeta_{m_{\tau}}\}\subset K$,  it follows that one can estimate the ratio $\frac{|\Van_{\calW}(\zeta_1,\ldots,\zeta_{m_{\tau}})|}{|\Van(\zeta_1,\ldots,\zeta_{m_{\tau}})|}$ with powers of $m$ and $M$, by repeatedly applying \eqref{eqn:mM} to compare rows of the associated Vandermonde matrices.  One can verify that the growth of these powers is strictly smaller, as a function of $\tau$, than the growth of $l_{\tau}$.  Finally, a similar argument as carried out in the above proof (forming an equation similar to \eqref{eqn:WV}, taking $l_{\tau}$-th roots, etc.) shows that transfinite diameter defined in terms of $\Van(\cdot)$  gives the same value as that defined in terms of $\Van_{\calW}(\cdot)$.

\section{Appendix: The monic basis}

In \cite{rumelylauvarley:existence}, Rumely, Lau and Varley construct
the \emph{sectional capacity} of an algebraic variety.  As in our case
above, Zaharjuta's method plays an essential role.  A so-called
\emph{monic basis} is constructed on the variety with good
multiplicative properties, similar to those of the basis $\calC$ from
Definition~\ref{def:basisCprec}.  Using the monic basis, Chebyshev
constants are then defined in terms of normalized polynomial classes,
and products of Chebyshev constants give the sectional capacity.

The monic basis of \cite[\S4]{rumelylauvarley:existence} is defined in
a very general, abstract setting.  For simplicity, let
$X\subseteq\PP^n$ be an irreducible variety of dimension $m$ and
degree $d$ over $\CC$.  As before, homogeneous coordinates in $\PP^n$
are denoted by $z=[z_0:z_1:\cdots:z_n]$. Then $X$ gives the graded
ring $\CC[X] = \CC[z]/\mathbf{I}(X)$.  The monic basis is a vector
space basis of $\CC[X]$ consisting of homogeneous elements
$\eta_\gamma \in \CC[X]_s$.  Here is a brief sketch of how the monic
basis is constructed:
\begin{enumerate}
\item Write $X=X^{(0)}\supseteq X^{(1)}\supseteq
  X^{(2)}\supseteq\cdots\supseteq X^{(m-1)}$, where for $\ell=1,\dots,m-1$
  we have $X^{(\ell)}=\{z\in X^{(\ell-1)} : z_\ell=0\}$.  We assume
  $X^{(\ell)}$ to be an irreducible variety of dimension $m-\ell$, and that
  the curve $ X^{(m-1)}$ intersects $z_0=0$ in distinct smooth points
  of points of $X^{(m-1)}$; say on the set $D=\{q_1,\dots,q_d\}$.
\item Fix a sufficiently large positive integer $j_0$, for which the
  following holds for $j\geq j_0$: 
\begin{enumerate}
\item For each $i=1,\dots,d$ there exists a rational function on
  $X^{(m-1)}$ with a pole of order $j$ at $q_i$ and no other poles.
\item The collection of rational functions on $X^{(m-1)}$ with poles
  of order at most $j$ on $D$ is isomorphic to the collection of
  homogeneous polynomials on $X^{(m-1)}$ of degree $j$.
\end{enumerate}
\item For each $i,j$ as above, choose a rational function $\eta_{i,j}$
  (normalized appropriately) that satisfies part (a) of the previous
  step.  Choose these functions so that the collection
  $\{\eta_{i,j}\}$ is multiplicatively finitely
  generated.\footnote{This will ensure that the monic basis has good
    multiplicative properties, as can be seen in Example
    \ref{ex:sphere2} below.}
\item Use these rational functions to construct, for each $j$, a basis
  for the homogeneous polynomials of degree $j$ on $X^{(m-1)}$.  (Note
  that these are polynomials in the variables
  $z_0,z_m,z_{m+1},\dots,z_n$ only.)
\item Construct a basis for homogeneous polynomials on the spaces
  $X^{(m-2)},\dots$, $X^{(1)}$, $X$ in turn by inductively adjoining
  monomials in the remaining variables.
\end{enumerate}

The properties of the monic basis and a justification of the above
steps is given in \S\S 4 and 5 of \cite{rumelylauvarley:existence}.
See especially \cite[Thm.\ 4.1]{rumelylauvarley:existence}. 

Note in particular that the monic basis gives a basis of $\CC[X]_s$
for every $s$.  This differs from our setting, where $V \subseteq
\CC^n$ is an affine variety with coordinate ring $\CC[V] =
\CC[z_1,\dots,z_n]/I(V)$.  The basis $\calC$ we construct in
Definition~\ref{def:basisCprec} consists of polynomials that restrict to a
basis of $\CC[V]_{\le s}$ for every $s$.  Thus our basis is compatible
with a \emph{filtration}, while the monic basis in
\cite{rumelylauvarley:existence} is compatible with a \emph{grading}. 

We illustrate how the two bases are related by examining the monic
basis for the complexified sphere considered in Example \ref{ex:sphere}. 

\begin{example}\label{ex:sphere2} \rm  Let 
\[
X=\{[z_0:z_1:z_2:z_3]\in\PP^3 : z_1^2+z_2^2+z_3^2=z_0^2\}
\subseteq \PP^3, 
\]
and $\CC[X] = \CC[z]/\langle z_1^2+z_2^2+z_3^2-z_0^2 \rangle$.  Then
$X^{(1)}$ is the quadratic curve given by $z_1=z_2^2+z_3^2-z_0^2=0$ that intersects
$z_0=0$ in $[0:0:1:\pm i]$.

For each $j=1,2,\dotsc$, it is easy to see that 
\[
\eta_{1,j}(z_0,z_2,z_3) := \left(\frac{z_2+iz_3}{2z_0}\right)^j =
\left(\frac{\bv_1}{z_0}\right)^j 
\]
defines a rational function on $X^{(1)}$ with a pole of order $j$ at
$[0:0:1:-i]$ and no other poles.  The function defined by
\[
\eta_{2,j}(z_0,z_2,z_3):=\left(\frac{z_2-iz_3}{2z_0}\right)^j =
\left(\frac{\bv_2}{z_0}\right)^j 
\]
has the same property in relation to $[0:0:1:i]$.  The rational
functions with at most poles of order $j$ at $[0:0:1:\pm i]$ are then
spanned by
\[
\{1,\eta_{1,1},\eta_{2,1},\eta_{1,2},\eta_{2,2},\dots,
\eta_{1,j},\eta_{2,j}\}.  
\]
A multiplicative generating set is $\{1,\eta_{1,1},\eta_{2,1}\}$.  

Clearing denominators (i.e., multiplying by $z_0^j$) gives the
corresponding basis of homogeneous polynomials of degree $j$ on
$X^{(1)}$.  For example, when $j=2$ we obtain the polynomials
\[
z_0^2,z_0\bv_1,z_0\bv_2,\bv_1^2,\bv_2^2.
\]
To get the basis for the variety $X$, we adjoin powers of $z_1$ to
basis elements for $X^{(1)}$ using the decomposition $\CC[X]_j =
z_1\CC[X]_{j-1} \oplus \CC[X^{(1)}]_j$.  When $j=2$, for example, we
compute that
\begin{equation}
\label{eqn:sphere2}
\begin{aligned}
\CC[X]_2 &= z_1\CC[X]_1 \oplus \CC[X^{(1)}]_2 \\
 &= z_1( z_1\CC[X]_0 \oplus \CC[X^{(1)}]_1) \oplus \CC[X^{(1)}]_2 \\ 
 &= z_1^2\CC[X]_0 \oplus z_1\CC[X^{(1)}]_1 \oplus \CC[X^{(1)}]_2 \\ 
 &= z_1^2\span\{1\} \oplus z_1\span\{z_0,\bv_1,\bv_2\} \oplus
\span\{z_0^2,z_0\bv_1,z_0\bv_2,\bv_1^2,\bv_2^2\} \\  
 &=
\span\{z_0^2,z_0z_1,z_1^2,z_0\bv_1,z_1\bv_1,\bv_1^2,z_0\bv_2,
z_1\bv_2,\bv_2^2\}.  
\end{aligned} 
\end{equation}
The last line gives the monic basis for $j=2$, where the 
basis elements are listed according to the ordering used in
\cite{rumelylauvarley:existence}.   

For arbitrary $j$, monic basis elements $\CC[X]_j$ are either
monomials in $z_0$ and $z_1$ of degree $j$, or are homogeneous
polynomials of the form $z_0^{\alpha_0}z_1^{\alpha_1}\bv_i^{\alpha_2}$
with $\alpha_0+\alpha_1+\alpha_2=j$.  Monomials in $z_0,z_1$ are
listed first in lexicographic order (with $z_0$ preceding $z_1$),
followed by elements of the form
$z_0^{\alpha_0}z_1^{\alpha_1}\bv_i^{\alpha_2}$.  The latter are listed
in increasing order on $i$, then lexicographically by
$\alpha=(\alpha_0,\alpha_1,\alpha_2)\in\ZZ_{\geq 0}^3$.  This
completes the construction of the monic basis for $X$.  
\end{example}

The monic basis constructed in Example~\ref{ex:sphere2} involves
arbitrarily large powers of $\bv_1$ and $\bv_2$.  This is related to
the multiplicative properties of the monic basis described in
\cite[Thm.\ 4.1]{rumelylauvarley:existence}.

It is interesting to compare the monic basis of
Example~\ref{ex:sphere2} to the basis constructed in
Example~\ref{ex:sphere}.  There, we worked with $V =
\mathbf{V}(z_1^2+z_2^2+z_3^2-1) \subseteq \CC^3$.  Since the Zariski
closure of $V$ is $\overline{V} = X =
\mathbf{V}(z_1^2+z_2^2+z_3^2-z_0^2)\subseteq \PP^3$, homogenization
with respect to $z_0$ induces an isomorphism
\[
\CC[V]_{\le j} \simeq \CC[X]_j
\]
for all $j$.  It follows that the basis of Example~\ref{ex:sphere},
when restricted to elements of degree $\le j$, gives a basis of
$\CC[X]_j$.  However, this basis differs from the monic basis in
degree $j$.  For example, when $j = 2$, homogenizing the basis of
Example~\ref{ex:sphere} in degree $\le 2$ gives the homogeneous
polynomials
\[
z_0^2,z_0z_1,z_0\bv_1,z_0\bv_2,z_1^2,z_1\bv_1,z_1\bv_2,z_2\bv_1,z_2\bv_2.
\]
Comparing this to the last line of \eqref{eqn:sphere2}, we see that in
degree $2$, the monic basis uses $\bv_1^2$ and $\bv_2^2$, while our
basis uses $z_1\bv_1$ and $z_1\bv_2$.  These are related by
\[
\bv_1^2 = z_1\bv_1 + {\textstyle\frac14 z_1^2-\frac14 z_0^2}, \quad
\bv_2^2 = z_1\bv_2 + {\textstyle\frac14 z_1^2-\frac14 z_0^2}.
\]

At the conceptual level, the basis $\calC$ constructed in
Definition~\ref{def:basisCprec} focuses on the \emph{module}
properties of the basis, as highlighted in
Theorem~\ref{thm:freemodule}.  In contrast, the monic basis
constructed in \cite{rumelylauvarley:existence} focuses on the
\emph{multiplicative} properties of the basis.  In our treatment, the
multiplicative properties of $\calC$ follow from Lemma~\ref{lem:2.4}.
Our construction is more direct (we avoid the inductive approach needed in
\cite{rumelylauvarley:existence}) but less general than that of
\cite{rumelylauvarley:existence}.

\bibliographystyle{abbrv}
\bibliography{myreferences}

\end{document}